\documentclass[11pt,epsfig,amsfonts]{amsart}
\usepackage{stmaryrd}
\usepackage{xcolor}
\usepackage{todonotes}
\usepackage{epsfig}
\usepackage{enumitem}
\usepackage{comment}
\usepackage{amsmath}
\usepackage{amssymb}
\usepackage{amscd}
\usepackage{graphicx}
\usepackage{lineno}
\usepackage{pstricks}
\usepackage{tocvsec2}
\usepackage{mathtools}
\usepackage[basic]{mnotes}
\allowdisplaybreaks[4]

\usepackage [backref, colorlinks, citecolor=red, anchorcolor=black, linkcolor=blue]{hyperref}

\usepackage{amsfonts}
\usepackage[top=35mm, bottom=35mm, left=30mm, right=30mm]{geometry}
\usepackage{verbatim}
\usepackage{graphicx, amssymb, amsmath,amsthm}
\allowdisplaybreaks[4]
\usepackage{appendix}
\usepackage{mathrsfs}
\usepackage{cite}
\usepackage{titlesec}
\usepackage{titletoc}
\titleformat{\part}[block]{\huge\bfseries\centering}{Part \arabic{part}.}{0.5em}{}[]
\titleformat{\section}[block]{\Large\bfseries\centering}{ \arabic{section}.}{1em}{}[]
\titleformat{\subsection}[block]{\mdseries\bfseries}{\arabic{section}.\arabic{subsection}.}{1em}{}[]
\titleformat{\subsubsection}[block]{\normalsize\bfseries}{\arabic{section}.\arabic{subsection}.\arabic{subsubsection}.}{1em}{}[]

\makeatletter
\@namedef{subjclassname@2020}{%
	\textup{2020} Mathematics Subject Classification}
\makeatother

\def \a{\alpha} \def \b{\beta} \def \g{\gamma} \def \d{\delta}
  \def \p{\Theta} 
  \def \z{\zeta} 
\def \O{\Omega}   \def \r{\rho}
  \def \G{\Gamma}\def\rank{\mr{rank}}
 \def \spa{\mathrm{span}}
 
\def\Fix{\mr{Fix}}
\def\mr{\mathrm}
\def \Z{\mathbb Z}
\def\mc{\mathcal}\def\id{\mr{id}}\def\mb{\mathbb}\def\id{\mathrm{id}}\def\mr{\mathrm }\def\CP{\mb{CP}}\def\C{\mb C}\def\R{\mb R}\def\Pl{\mr{Pl}}\newcommand{\HP}{\mathbb{HP}}\def\PGL{\mr{PGL}}\def\Hom{\mr{Hom}}
\newcommand{\Gr}{\mathrm{Gr}}\def\RS{\mr{SR}(\O,\mb H)}

\newcommand{\GL}{\mathrm{GL}}\newcommand{\Mat}{\mathrm{Mat}}
\usepackage{xcolor}

\usepackage{xcolor}
\newtheorem{thm}{Theorem}[section]
\newtheorem{lem}[thm]{Lemma}
\newtheorem{cor}[thm]{Corollary}
\newtheorem{prop}[thm]{Proposition}

\numberwithin{equation}{section}
\usepackage{epsfig}

\newtheoremstyle{remarkbold}
  {6pt plus 2pt minus 2pt} 
  {6pt plus 2pt minus 2pt} 
  {\normalfont}
  {}
  {\bfseries}
  {.}
  {.5em} 
  {}

\theoremstyle{remarkbold}
\newtheorem{defn}[thm]{Definition}
\newtheorem{rem}[thm]{Remark}

\newtheorem{exam}[thm]{Example}

\dottedcontents{part}[0.5em]{\normalsize}{1em}{4pt}
\dottedcontents{section}[1.5em]{\normalsize}{1.5em}{4pt}
\dottedcontents{subsection}[3.5em]{\small}{2.0em}{4pt}
\dottedcontents{subsubsection}[4em]{\normalsize}{3.0em}{4pt}
\numberwithin{equation}{section}

\renewcommand*{\backref}[1]{}
\renewcommand*{\backrefalt}[4]{\quad \tiny
	\ifcase #1 (\textbf{NOT CITED.})%
	\or    (Cited on Section~#2.)%
	\else   (Cited on Section~#2.)%
	\fi}

\makeatletter

\def\MRbibitem{\@ifnextchar[\my@lbibitem\my@bibitem}

\def\mybiblabel#1#2{\@biblabel{{\hyperref{http://www.ams.org/mathscinet-getitem?mr=#1}{}{}{#2}}}}

\def\myhyperanchor#1{\Hy@raisedlink{\hyper@anchorstart{cite.#1}\hyper@anchorend}}

\def\my@lbibitem[#1]#2#3#4\par{%
	\item[\mybiblabel{#2}{#1}\myhyperanchor{#3}\hfill]#4%
	\@ifundefined{ifbackrefparscan}{}{\BR@backref{#3}}%
	\if@filesw{\let\protect\noexpand\immediate
		\write\@auxout{\string\bibcite{#3}{#1}}}\fi\ignorespaces%
}

\def\my@bibitem#1#2#3\par{%
	\refstepcounter\@listctr
	\item[\mybiblabel{#1}{\the\value\@listctr}\myhyperanchor{#2}\hfill]#3%
	\@ifundefined{ifbackrefparscan}{}{\BR@backref{#2}}%
	\if@filesw\immediate\write\@auxout
	{\string\bibcite{#2}{\the\value\@listctr}}\fi\ignorespaces%
}

\makeatother

	\subjclass[2020]{Primary: 16W22, 30G35. Secondary: 53C28.}
	\keywords{Invariants, Functions of hypercomplex variables, Slice regular functions, Twistor spaces}

	\author{Chunlin Liu}
	\address[C. Liu]
	{School of Mathematical Sciences, Dalian University of Technology, Dalian, 116024, P.R. China, and Institute of Mathematics, Polish Academy of Sciences, ul. Śniadeckich 8, 00-656 Warszawa, Poland}
	\email{chunlinliu@mail.ustc.edu.cn}
		\author{Giovanni Moreno}
	\address[G. Moreno]
	{Department of Mathematical Methods in Physics,
    Department of Physics,
    Warsaw University,
    ul. Pasteura 5, 02-093 Warsaw, Poland.}
	\email{Giovanni.Moreno@fuw.edu.pl}
    	\author[Haipan Shi]{Haipan Shi$^{*}$}
	\address[H. Shi]
	{School of Science,
	Huzhou Normal University, Wuxing District, Huzhou City, Zhejiang Province, 313000, P. R. China }
	\email{03298@huznu.edu.cn}
\thanks{$^{*}$Corresponding author.Email:03298@huznu.edu.cn}

\newcommand{\1}{\underline 1}
\renewcommand{\epsilon}{\varepsilon}
\renewcommand{\epsilon}{\varepsilon}

\begin{document}
\title[Classifying Slice-Regular Polynomials]{Classifying Slice-Regular Polynomials via Group Actions on the Twistor Space}
\maketitle
\begin{abstract}
    We study the equivalence classes of slice--regular functions $f:\Omega\to\mathbb{H}$ on a symmetric slice domain $\Omega$, and of their subclass made of polynomial slice--regular functions, with respect to the natural action of $\PGL(2,\mathbb{H})$ and its subgroups, by employing the twistor construction. In particular, we characterize slice--regular functions whose twistor lift is planar and belongs to a given orbit, and we find normal classes of slice--regular polynomials with respect to the action of a parabolic subgroup of $\GL(2,\mathbb{H})$.
\end{abstract}

\tableofcontents
\section{Introduction}

The theory of slice regular functions, introduced in 2006--2007 by G.~Gentili and D.~C.~Struppa \cite{GGSD06,GGS07}, has since its inception experienced several major development: see, for instance,   \cite{ColomboGentiliSabadiniStruppa2009,GhiloniPerotti2011, AltavillaSarfatti2019} and references therein. One of the main selling points of the definition of a slice--regular function is that it encompasses all   convergent power series of
the form
$$
\sum_{n=0}^{\infty}q^na_n
$$
of a quaternionic variable $q$ with quaternionic coefficients $a_n$.\par 
Let $\mathbb{S}$ be the unit sphere of imaginary units in $\mathbb{H}$. 
A fundamental milestone in the development of the theory came in 2014 with the introduction of the twistor transform of a slice--regular function by G.~Gentili, S.~Salamon and C.~Stoppato \cite{GSS14}: building on the natural link between twistor geometry and Lie sphere geometry \cite{HoffmannShapiro2013}, they observed that a slice--regular function $f$ maps spheres $x+y\cdot\mathbb{S}$ in $\mathbb{H}\smallsetminus\mathbb{R}$ to other spheres of $\mathbb{H}$, and such a correspondence is nothing but the twistor transform of $f$, see also~\cite{altavilla2018twistor}.\par 
In this paper we exploit the natural group actions on $\Gr_2(\C^4)$ to introduce equivalence relations on the set of slice--regular functions $\RS$ on a symmetric slice domain $\Omega\subset\mathbb{H}$.\par 
Our work relates to the general problem of finding invariants of slice--regular function, that has been tackled, with different techniques, by various authors. For instance, C.~Bisi and J.~Winkelmann have recently studied the fundamental invariants of slice--regular functions with respect to the autumorphism groups of (complexified) quaternions \cite{BisiWinkelmann2026}. A special class of slice--regular functions appears also in the paper  \cite{AltavillaDeFabritiis2018} by A.~Altavilla and C.~de Fabritiis, where the authors characterize the property of preserving one complex slice.\par
Our approach relies on the homogeneous structure of the Grassmann manifold $\Gr_2(\C^4)$, that is the target space of the twistor lifts of slice--regular functions. A noteworthy example that exploits such an approach  is the work \cite{jiao2003rigidity} by X.~Jiao and J.~Peng, prior to the introduction of the idea of a slice--regular function: the authors uses the first and second fundamental forms to classify  holomorphic curves in complex Grassmann manifolds, see also~\cite{jiao2020orthogonal,jiao2022minimal,zhang2021minimal}.\par
More accurately, we study the orbits in the Pl\"ucker embedding space $\mathbb{P}(\Lambda^2\C^4)$ of $\Gr_2(\mathbb{C}^4)$ with respect to the natural action of $\PGL(2,\mathbb{H})$ (Theorem~\ref{thm:pole-orbit-types}) and then we use this result to classify \emph{planar} slice--regular functions, i.e., elements of $\RS$, whose twistor lift lies inside a hyperplane section of $\mathbb{P}(\Lambda^2\C^4)$ of a given \emph{type}. 
\subsubsection*{Structure of the paper}
In Section~\ref{secPrel} we cover all necessary definitions an theorems. We fix the notations and the conventions for the complex and quaternionic variables and the related linear algebra, with a particular emphasis on $\GL(2,\mathbb{H})$ and $\GL(4,\mathbb{C})$. We recall the structure of  $\Gr_2(\mathbb{C}^4)$, its minimal projective embedding in $\mathbb{P}(\Lambda^2\C^4)$, its local affine charts, and the notion of a hyperplane section. We review the twistor construction allowing to lift a slice--regular function to a holomorphic curve with values in  $\Gr_2(\mathbb{C}^4)$: this leads naturally to the notion of \emph{planarity} and \emph{hyperplane type} of a slice--regular function, see Definition~\ref{def:hyperplane-sectional} and Definition~\ref{def:A-planar-unified}, respectively.\par
In Section~\ref{secHyperPlaneSections} we carry out a full classifications of $\PGL(2,\mathbb{H})$--orbits in  $\mathbb{CP}^5=\mathbb{P}(\Lambda^2\C^4)$   (Theorem~\ref{thm:pole-orbit-types}) that has an immediate counterpart in the orbit classification of projective dual space $(\mathbb{CP}^5)^\vee$ of hyperplanes of $\mathbb{CP}^5$.\par 
The results of Section~\ref{secHyperPlaneSections} are applied in Section~\ref{secApplications1} to the class of slice--regular polynomials: we give a criterion for planarity (Theorem~\ref{thm:Hyperplane-sectionality-criterion}) and we show that the twistor lift of a given slice--regular function can be contained in several hyperplane sections (Theorem~\ref{thm:second class.}), thus leading, in some cases, non a non--uniqueness of the planar type (Theorem~\ref{thm:unique-planar-type}).\par 
In the final Section~\ref{secApplication2} we study more in depth how the action of $\GL(2,\mathbb{H})$ on $\Gr_2(\C^4)$ is mirrored by a partial action on $\RS$ (Corollary~\ref{cor:partial-action}) and a global action of a smaller group (Theorem~\ref{thm:genuine-action-gamma}). The latter allows to find normal forms of slice--regular polynomials, up to a small residual freedom given by the simultaneous inner conjugation of all intermediate coefficients (Corollary~\ref{cor:residual-action-Nn}).


\medskip

\medskip
\noindent\textbf{Acknowledgments:} C. Liu was supported by   the
		Postdoctoral Fellowship Program and China Postdoctoral
		Science Foundation under Grant Number BX20250067, and the China Postdoctoral Science
		Foundation under Grant Number 2025M773074.
H. Shi was supported by the Zhejiang Provincial Natural Science Foundation of China under Grant No.LQN25A010014 and No.LMS26A010001, the National Natural Science Foundation of China under Grant No. 12671152, and the China Scholarship Council under Grant No. 202509710010.

\section{Preliminaries}\label{secPrel}
In this section, we begin by fixing the coordinate conventions used throughout the paper and by recalling the affine twistor model for slice-regular quaternionic functions. The main purpose of this section is to express the twistor transform as a holomorphic curve in the affine chart \(U_6 \subset \operatorname{Gr}_2(\mathbb C^4)\), and to formulate planarity as containment in a hyperplane section of the Klein quadric.
\subsection{Quaternionic and complex coordinate conventions}
Let $\mb H=\{ a+bi+cj+dk\mid a,b,c,d\in \mb R \}$ be the   algebra of quaternions, and 
let
\[
\mathbb S:=\{q\in \mathbb H\mid q^2=-1\}
\]
be the unit sphere of imaginary units. To any imaginary unit $i\in \mb S$ we associate the  complex plane
\[
\C_i:=\R+i\R\cong \C
\]
and the complex half--plane 
\[
\C_i^{+}:=\{x+yi\in \C_i:\mid y>0\}\subset\C_i\, .
\]
Throughout this paper, we fix two orthogonal imaginary units $i,j\in \mb S $ with
$ij=-ji$: it follows that each quaternion $q=a+bi+cj+dk\in\mb H$ determines uniquely two complex numbers $z=a+bi, w=c+di\in \C_i$, such that 
\begin{equation}\label{eqQatAsComplexPair}
    q=z+w j \, .
\end{equation}
\begin{defn}
The \emph{reflection} $\hat{a}(v)$  of a holomorphic function  $a:V\to \C_i$ on a domain
$V\subset \C_i$ is given by
\begin{equation}\label{eqDefHatA}
    \hat{a}(v):=\overline{a(\overline v)}\, .
\end{equation}
\end{defn}
We recall that $\hat{a}$ is again holomorphic on $V$ whenever $V$ is symmetric with respect to
complex conjugation, or whenever $V\subset \C_i^+$: then $\hat a$ can be regarder as the holomorphic Schwarz reflection of $a$.

For simplicity of notation, we shall omit the subscript \(i\)
and write simply \(\mathbb C\) instead of \(\mathbb C_i\). Thus, whenever complex
matrices such as \(\operatorname{Mat}_{2\times 2}(\mathbb C)\) occur below, the symbol
\(\mathbb C\) should be understood as the fixed slice \(\mathbb C_i\) under the above
identification.

\subsection{Graph coordinates on \(\operatorname{Gr}_2(\mathbb C^4)\)}
In this subsection we recall the Pl\"ucker embedding of $\Gr_2(\C^4)$ in $\mb C\mb P^5$ and describe a standard affine chart in terms of graphs of $2\times 2$ matrices.

The Grassmannian
$$
\Gr_2(\C^4):=\{L\mid L\subset\C^4\textrm{ is a linear subspace and }\dim_{\C}L=2\}
$$
is a complex projective manifold of dimension $4$: its embedding in 
$$\mb \CP^{5}\cong P\left(\bigwedge^2\C^4\right)$$ 
is given by 
the {\em Pl\"ucker embedding}  
\begin{eqnarray}
\Pl:\Gr_2(\C^4)&\hookrightarrow& \mb \mb \CP^{5}\, ,\label{eqPluckEmb}\\
L=\spa\{v_1,v_2\}&\longmapsto& [v_1\wedge v_2]\, ,\nonumber
\end{eqnarray}
 
Indeed, the replacement of $(v_1,v_2)$ by another basis of $L$ corresponds to a rescaling of $v_1\wedge v_2$, which does not affect the projective class thereof.\par 

To the standard   basis $(e_1,e_2,e_3,e_4)$  in $\C^4$ we associate the   basis
\[
E_1=e_3\wedge e_4,\quad
E_2=-e_2\wedge e_4,\quad
E_3=e_2\wedge e_3,\quad
E_4=e_1\wedge e_4,\quad
E_5=-e_1\wedge e_3,\quad
E_6=e_1\wedge e_2
\]
of $\bigwedge^2\C^4$, which in turns determine homogeneous coordinates
\begin{equation}
    \label{eqZetas}[\zeta_1:\zeta_2:\zeta_3:\zeta_4:\zeta_5:\zeta_6]
\end{equation}
on $\CP^{5}$. 
In these   coordinates,  
the image $\Pl(\Gr_2(\C^4))$ of the Pl\"ucker embedding \eqref{eqPluckEmb}
is cut out by the equation
\begin{equation}\label{eq:Klein-eq-unified}
q(\z):=\zeta_{1}\zeta_{6}-\zeta_{2}\zeta_{5}+\zeta_{3}\zeta_{4}=0\, ,
\end{equation}
and it is known as the Klein quadric. 
The coordinates $[\zeta_1:\zeta_2:\zeta_3:\zeta_4:\zeta_5:\zeta_6]$ of an element $\Pl(L)$ will be therefore referred to as the Pl\"ucker coordinates of $L\in\Gr_2(\C^4)$.
\begin{defn}\label{defStdAffCh}
    A \emph{standard affine chart} $\mathcal{U}_6\subset\Gr_2(\C^4)$   consists of all those $L\in \Gr_2(\C^4)$ whose projection onto the first summand of a decomposition
\begin{equation}\label{eqC4C2C2}
    \C^4=\C^2\oplus \C^2
\end{equation}
is an isomorphism. 
\end{defn}
The standard affine chart $\mathcal{U}_6$ can be identified with the 4--dimensional linear space $\Mat_{2\times2}(\C)$ of all $2\times 2$ matrices: if a quadruple $(Z_1,Z_2,Z_3,Z_4)\in\C^4$ of complex numbers is rewritten as a pair $(x,y)$, where $ x=(Z_1,Z_2)\in\C^2$ and $y=(Z_3,Z_4)\in\C^2$, then the graph of the linear map $\Phi:\C^2\to\C^2$, corresponding to the matrix   
\[
\Phi=\begin{pmatrix} a & b \\ c & d \end{pmatrix}\in \Mat_{2\times2}(\C)
\]
is given by
\begin{equation}\label{eqDefEllPhi}
    L(\Phi):=\{(x,y)\mid y=\Phi (x)\}
=\{(Z_1,Z_2,Z_3,Z_4)\in\C^4\mid  Z_3=aZ_1+bZ_2,\    Z_4=cZ_1+dZ_2\}\, .
\end{equation}

Thanks to Lemma~\ref{lem:graph-chart} below, we can  identify
\begin{equation}\label{eq:U_6}
    \mathcal{U}_6\equiv\{[\zeta_1:\zeta_2:\zeta_3:\zeta_4:\zeta_5:\zeta_6]\in \Pl(\Gr_2(\C^4))\mid \zeta_6\neq 0\}\, ,
\end{equation}
which incidentally  clarifies the choice of index $6$ in the symbol  $\mathcal{U}_6$.
\begin{lem}\label{lem:graph-chart}
    For every matrix
\[
\Phi=\begin{pmatrix} a & b \\ c & d\end{pmatrix}\in \Mat_{2\times 2}(\C) 
\]
the graph $L(\Phi)$ is an element of $\mathcal{U}_{6}$, whose  Pl\"ucker coordinates are:
\begin{equation}
\label{eq:graph-plucker}
\Pl(L(\Phi))=[\det\Phi:c:-a:d:-b:1]\, .
\end{equation}
Conversely, every point of $\mathcal{U}_{6}$ has the form $L(\Phi)$ for a unique matrix
$\Phi\in \Mat_{2\times 2}(\C)$.
\end{lem}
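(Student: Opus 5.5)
Choose the explicit basis of $L(\Phi)$ obtained by feeding the standard basis of the first summand $\C^2$ into the graph, namely
\[
v_1=(1,0,a,c)=e_1+a e_3+c e_4,\qquad v_2=(0,1,b,d)=e_2+b e_3+d e_4 ,
\]
which by \eqref{eqDefEllPhi} span $L(\Phi)$. Their projections onto the first summand are $e_1,e_2$, so the projection $L(\Phi)\to\C^2$ is an isomorphism and $L(\Phi)\in\mathcal U_6$ by Definition~\ref{defStdAffCh}. Next, expand $v_1\wedge v_2$ bilinearly:
\[
v_1\wedge v_2=e_1\wedge e_2+b\,e_1\wedge e_3+d\,e_1\wedge e_4-a\,e_2\wedge e_3-c\,e_2\wedge e_4+(ad-bc)\,e_3\wedge e_4 ,
\]
and rewrite it in the basis $E_1,\dots,E_6$, taking care of the signs in $E_2=-e_2\wedge e_4$ and $E_5=-e_1\wedge e_3$. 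This gives coefficients $\det\Phi$, $c$, $-a$, $d$, $-b$, $1$ for $E_1,\dots,E_6$, which is \eqref{eq:graph-plucker}. As a consistency check, $q(\zeta)=\det\Phi-bc-ad=0$, so the point lies on the Klein quadric \eqref{eq:Klein-eq-unified}. Since $\zeta_6=1\neq0$, this also justifies the identification \eqref{eq:U_6} in one direction.

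For the converse, take $L\in\mathcal U_6$ and let $\pi:L\to\C^2$ be the projection onto the first summand, which is an isomorphism by definition. Let $\Phi$ be the composite of $\pi^{-1}$ with the projection onto the second summand. Then every $(x,y)\in L$ satisfies $y=\Phi(x)$, so $L\subseteq L(\Phi)$, and equality holds because both have dimension $2$.

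Uniqueness follows because $L(\Phi)=L(\Phi')$ forces $\Phi(x)=\Phi'(x)$ for all $x$: the point over $x$ in the graph is unique. Alternatively, the entries $a,b,c,d$ can be read off from the normalized Plücker coordinates $\zeta_3,\zeta_5,\zeta_2,\zeta_4$ once $\zeta_6=1$.

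The only delicate step is the sign bookkeeping in the wedge expansion against the nonstandard basis $E_i$. To support \eqref{eq:U_6} I would also note that $\zeta_6$ is the $e_1\wedge e_2$ coefficient, i.e.\ the determinant of the projection onto the first two coordinates. Hence $\zeta_6\neq0$ is exactly the isomorphism condition defining $\mathcal U_6$, and no further difficulty arises.
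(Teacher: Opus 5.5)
Your proof is correct and is essentially the paper's argument: the same spanning vectors $v_1=e_1+ae_3+ce_4$, $v_2=e_2+be_3+de_4$ (you avoid the paper's typo $be_4$), the same wedge expansion read off against the $E_i$, and the same map $\Phi=p_2\circ(p_1|_L)^{-1}$ for the converse. One harmless slip in your optional consistency check: since $-\zeta_2\zeta_5=-c(-b)=+bc$, it should read $q(\zeta)=\det\Phi+bc-ad=0$.
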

\begin{proof}
If
\[
v_1=e_1+ae_3+ce_4,\qquad v_2=e_2+be_3+de_4,
\]
then $L(\Phi)=\spa\{v_1,v_2\}$ and
\[
\begin{aligned}
v_1\wedge v_2
&=(e_1+ae_3+ce_4)\wedge(e_2+be_4+de_4)\\
&=(ad-bc)E_1+cE_2-aE_3+dE_4-bE_5+E_6\, .
\end{aligned}
\]
Hence
\[
\Pl(L(\Phi))=[\det\Phi:c:-a:d:-b:1]\, .
\]
In particular, $L(\Phi)\in \mathcal{U}_{6}$ for every $\Phi\in \Mat_{2\times2}(\C)$.\par 
Conversely,  for every  $L\in \mathcal{U}_{6}$,   there exists a unique $\Phi$, such that $L=L(\Phi)$: indeed, $\Phi=p_2\circ(p_1|_L)^{-1}$, where $p_i$ denote the projection onto the $i$--th factor of~\eqref{eqC4C2C2}, for $i=1,2$.
\end{proof}
We recall that  $\Gr_2(\C^4)$ is a homogeneous manifold of the group $\GL(4,\C)$. Even though  $\mathcal{U}_6$ is \textit{not} preserved by the $\GL(4,\C)$--action, it is nonetheless convenient to show how a transformation $T\in \GL(4,\C)$, given in the block form
\begin{equation}
\label{eq:block-T}
T=\begin{pmatrix} A & B \\ C & D\end{pmatrix}\, ,
\qquad A,B,C,D\in \Mat_{2\times 2}(\C)\, ,
\end{equation}
corresponding to the    decomposition~\eqref{eqC4C2C2} above, acts on an element $L(\Phi)\in\mathcal{U}_6$. 
 
\begin{lem}\label{lem:fractional-linear-graphs}
    Let $T\in \GL(4,\C)$ be as in \eqref{eq:block-T}, and let $\Phi\in \Mat_{2\times 2}(\C)$.
Then the following statements are equivalent:
\begin{enumerate}[label=\textup{(\roman*)}]
\item $T\cdot L(\Phi)\in \mathcal{U}_{6}$;
\item the matrix $A+B\Phi$ is invertible.
\end{enumerate}
Whenever these conditions hold, one has
\begin{equation}
\label{eq:fractional-linear}
T\cdot L(\Phi)=L\bigl((C+D\Phi)(A+B\Phi)^{-1}\bigr)\, .
\end{equation}
\end{lem}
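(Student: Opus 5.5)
The plan is to compute the image of the graph $L(\Phi)$ directly, using the parametrization by $x\in\C^2$. By \eqref{eqDefEllPhi}, $L(\Phi)=\{(x,\Phi x)\mid x\in\C^2\}$. Applying $T$ in the block form \eqref{eq:block-T} gives
\[
T\cdot L(\Phi)=\{\bigl((A+B\Phi)x,\,(C+D\Phi)x\bigr)\mid x\in\C^2\}.
\]
Since $T$ is invertible, this is still a $2$--dimensional subspace. Its projection onto the first summand of \eqref{eqC4C2C2} is therefore the linear map $x\mapsto (A+B\Phi)x$, composed with the isomorphism $\C^2\cong L(\Phi)$, $x\mapsto(x,\Phi x)$.

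For the equivalence (i)$\Leftrightarrow$(ii), I will use Definition~\ref{defStdAffCh}. The subspace $T\cdot L(\Phi)$ lies in $\mathcal{U}_6$ exactly when $p_1|_{T\cdot L(\Phi)}$ is an isomorphism. Composing with the isomorphism $\C^2\to T\cdot L(\Phi)$, $x\mapsto T(x,\Phi x)$, this holds if and only if $A+B\Phi:\C^2\to\C^2$ is bijective, i.e.\ invertible.

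For formula \eqref{eq:fractional-linear}, assume $A+B\Phi$ is invertible and reparametrize with $u=(A+B\Phi)x$. Then
\[
T\cdot L(\Phi)=\{(u,\,(C+D\Phi)(A+B\Phi)^{-1}u)\mid u\in\C^2\},
\]
which is by definition the graph $L\bigl((C+D\Phi)(A+B\Phi)^{-1}\bigr)$. The uniqueness statement in Lemma~\ref{lem:graph-chart} confirms that this is the matrix attached to $T\cdot L(\Phi)$, namely $p_2\circ(p_1|_L)^{-1}$.

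There is no real obstacle here. The only point needing care is the direction (i)$\Rightarrow$(ii), which requires that the parametrization $x\mapsto T(x,\Phi x)$ be a bijection onto $T\cdot L(\Phi)$; this follows from the invertibility of $T$.
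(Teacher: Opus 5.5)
Your proposal is correct and follows the paper's proof essentially step by step: parametrize $L(\Phi)$ by $x\mapsto(x,\Phi x)$, observe that the first projection of $T\cdot L(\Phi)$ is represented by $A+B\Phi$, and then substitute $u=(A+B\Phi)x$ to read off the graph of $(C+D\Phi)(A+B\Phi)^{-1}$. Your explicit remark that $x\mapsto T(x,\Phi x)$ is a bijection onto $T\cdot L(\Phi)$ spells out a point the paper leaves implicit in the direction (i)$\Rightarrow$(ii).
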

    \begin{proof}
By its definition~\eqref{eqDefEllPhi}, the  plane $L(\Phi)$ consists of pairs $(x,\Phi x)$, with  $x\in \C^{2}$, so that  
\[
T(x,\Phi x)=\bigl((A+B\Phi)x,(C+D\Phi)x\bigr).
\]
Thus the first projection $T\cdot L(\Phi)\to \C^{2}$ is represented by $A+B\Phi$.
Therefore $T\cdot L(\Phi)$ belongs to the affine chart $\mathcal{U}_{6}$ if and only if this first
projection is invertible, i.e. if and only if $A+B\Phi$ is invertible. In that case we can
solve $x=(A+B\Phi)^{-1}y$, so that
\[
T(x,\Phi x)=\bigl(y,(C+D\Phi)(A+B\Phi)^{-1}y\bigr)\, .
\]
Hence $T\cdot L(\Phi)$ is the graph of $(C+D\Phi)(A+B\Phi)^{-1}$, which  proves
\eqref{eq:fractional-linear}.
\end{proof}

\subsection{Quaternionic linear algebra in complex coordinates}
In this subsection we recall the standard complex realization of quaternionic matrices, which allows  to embed the group  $\GL(2,\mb H)$ into  $\GL(4,\C)$.

The matrix
\[
J:=\begin{pmatrix}0&-1\\1&0\end{pmatrix}\in \Mat_{2\times 2}(\C)
\]
allows to define  an anti-linear involution \begin{eqnarray}\label{eq:Theta-constant-new}
\p:\Mat_{2\times 2}(\C) &\longrightarrow &\Mat_{2\times 2}(\C)\, \\
M&\longmapsto&\p(M):=J\,\overline{M}\,J^{-1}\, .\nonumber 
\end{eqnarray}
The fixed point set of $\p$ will be denoted by:
\begin{equation}\label{eq:rho-fixed-new}
\Fix(\p):=\{M\in \Mat_{2\times 2}(\C)\mid \p(M)=M\}.
\end{equation}
\begin{rem}
We use the same symbol $\p$ for the induced anti-linear involution on holomorphic matrix-valued functions on a symmetric domain $D\subset \C$:
\begin{equation}\label{eq:Theta-function-new}
(\p X)(v):=J\,\hat{X}\,J^{-1}.
\end{equation}
{Here, consistently with the notation introduced in \eqref{eqDefHatA}, the hat denotes the  reflection,
\[
\widehat X(v):=\overline{X(\overline v)},
\]
with complex conjugation applied entrywise.}
\end{rem}
Representation~\eqref{eqQatAsComplexPair} of quaternions allow to define the map 
\begin{equation}\label{eq:rho-quaternion}
\r:\mb H\to \Mat_{2\times 2}(\C),
\qquad
\r(z+w j):=\begin{pmatrix}z&-\overline w\\ w&\overline z\end{pmatrix}.
\end{equation}
A direct computation shows that
\begin{equation}\label{eq:H is fixed point}
    \Fix(\p)=\r(\mb H).
\end{equation}

We will denote by the same symbol $\r$ the $\R$--algebra monomorphism
\begin{equation}\label{eq:rho-entrywise}
\r:\Mat_{2\times 2}(\mb H)\to \Mat_{4\times 4}(\C)
\end{equation}
obtained by   applying $\r$ entry-wise on a $2\times 2$ quaternionic matrix. 
By identifying $\Mat_{2\times 2}(\mb H)$ with its image under \eqref{eq:rho-entrywise}, the group of invertible $2\times 2$ quaternionic matrices becomes a subgroup
\begin{equation}\label{eq:GL2H-image}
\GL(2,\mb H)=
\left\{
\begin{pmatrix}A&B\\ C&D\end{pmatrix}\in \GL(4,\C)\mid 
A,B,C,D\in \r(\mb H)
\right\}
\end{equation}
of the group $\GL(4,\C)$ of invertible $4\times 4$ complex matrices.

We observe that 
\begin{equation}\label{eq:rho-invertibles}
\r(\mb H)\cap \GL(2,\C)=\left\{\begin{pmatrix}z&-\overline w\\ w&\overline z,\end{pmatrix}\mid |z|^2+|w|^2\neq 0\right\}=\r(\mb H^{\times})
\end{equation}
is a copy of the multiplicative subgroup
$$
\mb H^\times:=\mb H\smallsetminus\{0\} =\{q\in \mb H\mid |q|^2=|z|^2+|w|^2\neq 0\}
$$
of $\mb H$: it follows that  the center of $\GL(2,\mb H)$ is
\[
\{rI_{4}\mid r\in \R^{\times}\}\, , 
\]
since the center of $\mb H$ is $\R$. 
Accordingly, we set
\begin{equation}\label{eq:PGL2H-def}
\mr{PGL}(2,\mb H):=\GL(2,\mb H)/\R^{\times}.
\end{equation}

\subsection{The affine twistor transform of a slice-regular function}
Recall that a domain \(\Omega\subset \mathbb H\) is called a \emph{slice domain} if
\(\Omega\cap \R\neq \emptyset\) and
\[
\Omega_I:=\Omega\cap \C_I
\]
is a domain in \(\C_I\) for every \(I\in \mathbb S\). It is called \emph{symmetric} if 
\[
x+yI\in \Omega \quad \Longrightarrow \quad x+yJ\in \Omega
\]
for every
\(x,y\in \R\) and every \(I,J\in \mathbb S\).

Let us set 
\[
\Omega_i:=\Omega\cap \C,
\qquad
V:=\Omega\cap \C^{+}.
\]
\begin{defn}
Let \(\Omega\subset \mathbb H\) be a symmetric slice domain. A function
\(f:\Omega\to \mathbb H\) is called \emph{slice regular} if for every \(i\in \mathbb S\),
the restriction
\[
f_i:=f|_{\Omega_i}:\Omega_i\to \mathbb H
\]
is holomorphic with respect to the complex structure given by left multiplication by \(i\), i.e.
\[
\overline{\partial}_{i}f_i(x+yi)
:=
\frac12\Bigl(\frac{\partial}{\partial x}+i\frac{\partial}{\partial y}\Bigr)f_i(x+yi)
=0
\]
on \(\Omega_i\). Denote by $\RS$ the set of all slice regular functions from $\O$ to $\mb H$.
\end{defn}

The splitting lemma \cite[Lemma~2.16 ]{CFGbook18} allows to rewrite the slice $f_i$ of a function \(f\in \RS\) as
\begin{equation}\label{eq:splitting1}
f_i(z)=g(z)+h(z)j\, ,\qquad z\in \Omega_i\, ,
\end{equation}
where 
\begin{equation}\label{eq:splitting}
    g,h:\Omega_i\to \C
\end{equation}
are  holomorphic
functions.
\begin{defn}\label{dfnSplittData}
    The pair $(g,h)$ above constitutes  the \emph{splitting data} of $f\in\RS$.
\end{defn}
Moreover, in view of definition~\eqref{eqDefHatA},    both \(\hat g\) and \(\hat h\)
turn out to be holomorphic on \(\Omega_i\), since   \(\Omega_i\) is symmetric with respect to complex conjugation: this allows to introduce the matrix-valued holomorphic map
\begin{equation}
\label{eq:Phi-f}
\Phi_f(z):=
\begin{pmatrix}
g(z) & -\hat h(z)\\
h(z) & \hat g(z)
\end{pmatrix}\, ,
\qquad z\in \Omega_i\, ,
\end{equation}
which, once restricted  to \(V\), defines the  holomorphic curve
\begin{eqnarray}\label{eq:Gf-chart-new}
    G_f:V &\to& \Gr_{2}(\C^{4})\, ,\\
 v&\mapsto &G_f(v):=L(\Phi_f(v))\nonumber
\end{eqnarray}
associated with $f$. 
\begin{defn}\label{defGraphMatrix}
    The holomorphic map~\eqref{eq:Phi-f} will be referred to as the \emph{graph matrix} of $f$.
\end{defn}
The holomorphic curve $G_f$ is   precisely   the twistor transform of \(f\) in the affine chart \(\mathcal{U}_6\): indeed, $G_f(v)$ has   Pl\"ucker coordinates
\begin{equation}
\label{eq:Plucker-Gf}
G_f(v)
=
[g(v)\hat g(v)+h(v)\hat h(v):h(v):-g(v):\hat g(v):\hat h(v):1]
\end{equation}
(see  Lemma~\ref{lem:graph-chart}) and obviously   \(G_f(V)\subset \mathcal{U}_6\), see \cite[Theorem~5.7]{GSS14}.\par 

We shall use the antiholomorphic map
\begin{equation}
\label{eq:j-on-C4}
\mathbf{j}(Z_{1},Z_{2},Z_{3},Z_{4})
=
(-\overline{Z}_{2},\overline{Z}_{1},-\overline{Z}_{4},\overline{Z}_{3}),
\end{equation}
 satisfying \(\mathbf{j}^{2}=-\id\), which  induces the real structure
\begin{equation}
\label{eq:sigma}
\sigma[\zeta_{1}:\zeta_{2}:\zeta_{3}:\zeta_{4}:\zeta_{5}:\zeta_{6}]
=
[\overline{\zeta}_{1}:\overline{\zeta}_{5}:-\overline{\zeta}_{4}:-\overline{\zeta}_{3}:\overline{\zeta}_{2}:\overline{\zeta}_{6}]
\end{equation}
on \(\mb P(\bigwedge^{2}\C^{4})\).\par 
The following theorem is the form of the twistor correspondence that we shall use.

\begin{thm}\label{thm:GSS-correspondence}
Let
$
G:V\to\Gr_2(\C^4)
$
be a holomorphic curve. Then the following are equivalent.

\begin{enumerate}
\item[\textup{(i)}] There exists a function \(f\in\RS\) such that
$
G=G_f.
$

\item[\textup{(ii)}] The curve \(G\) admits a holomorphic extension
$
\widetilde G:\Omega_i\to\Gr_2(\C^4)
$
such that
 $
\widetilde G(\Omega_i)\subset \mathcal U_6,
$
and
\[
\widetilde G(\overline z)=\sigma(\widetilde G(z))
\qquad
\text{for every }z\in\Omega_i.
\]
\end{enumerate}
Moreover, when these conditions hold, the function \(f\) is unique, and the
extension in \(\textup{(ii)}\) is necessarily
\[
\widetilde G=L\circ\Phi_f.
\]
\end{thm}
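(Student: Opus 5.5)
The plan is to prove the two implications separately, using only the affine chart description of Lemma~\ref{lem:graph-chart} and the explicit formula~\eqref{eq:sigma} for $\sigma$, and then to obtain uniqueness from the identity principle.

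\emph{(i)$\Rightarrow$(ii).} I take $\widetilde G:=L\circ\Phi_f$ on $\Omega_i$. It is holomorphic because $g,h,\hat g,\hat h$ are holomorphic on the conjugation-symmetric domain $\Omega_i$. It takes values in $\mathcal U_6$ by Lemma~\ref{lem:graph-chart}, and it restricts to $G_f=G$ on $V$. For the reality condition I compute directly in Pl\"ucker coordinates. By~\eqref{eq:Plucker-Gf}, $\widetilde G(z)=[g\hat g+h\hat h:h:-g:\hat g:\hat h:1](z)$. Applying~\eqref{eq:sigma} gives
\[
\bigl[\overline{(g\hat g+h\hat h)(z)}:\overline{\hat h(z)}:-\overline{\hat g(z)}:\overline{g(z)}:\overline{h(z)}:1\bigr].
\]
Using $\overline{a(z)}=\hat a(\overline z)$ and $\overline{\hat a(z)}=a(\overline z)$, this equals $[(\hat g g+\hat h h)(\overline z):h(\overline z):-g(\overline z):\hat g(\overline z):\hat h(\overline z):1]$, which is $\widetilde G(\overline z)$.

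\emph{(ii)$\Rightarrow$(i).} Since $\widetilde G(\Omega_i)\subset\mathcal U_6$, Lemma~\ref{lem:graph-chart} gives a unique $\Phi(z)=\begin{pmatrix}a&b\\c&d\end{pmatrix}(z)$ with $\widetilde G=L\circ\Phi$. The entries are holomorphic, since they are the affine coordinates $\zeta_k/\zeta_6$ (up to sign) of a holomorphic curve. I normalise $\zeta_6=1$ on both sides of $\widetilde G(\overline z)=\sigma(\widetilde G(z))$; the projective equality then becomes an equality of vectors. Applying~\eqref{eq:sigma} to $[\det\Phi:c:-a:d:-b:1](z)$ gives $[\overline{\det\Phi}:-\overline b:-\overline d:\overline a:\overline c:1](z)$. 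Comparing entries yields $c(\overline z)=-\overline{b(z)}$ and $a(\overline z)=\overline{d(z)}$, that is $b=-\hat c$ and $d=\hat a$, while the first entry is then automatic. I then set $g:=a$ and $h:=c$, so that $\Phi=\begin{pmatrix}g&-\hat h\\ h&\hat g\end{pmatrix}$.

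The step I expect to be the main obstacle is producing $f$ from the single slice $F:=g+hj:\Omega_i\to\mathbb H$. The function $F$ is holomorphic for left multiplication by $i$, because $g,h$ are holomorphic and $j$ multiplies on the right. Since $\Omega$ is a symmetric slice domain, I would invoke the extension lemma (representation formula) from \cite{CFGbook18}. It gives a unique $f\in\RS$ with $f|_{\Omega_i}=F$, via $f(x+yI)=\tfrac12\bigl(F(z)+F(\overline z)\bigr)-\tfrac{I i}{2}\bigl(F(z)-F(\overline z)\bigr)$ with $z=x+yi$. The splitting data of $f$ are then $(g,h)$, so $\Phi_f=\Phi$ and $G_f=G$ on $V$.

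\emph{Uniqueness.} If $G_f=G_{f'}$ on $V$, then Lemma~\ref{lem:graph-chart} gives $\Phi_f=\Phi_{f'}$ on the nonempty open set $V$. Hence the splitting data agree on $V$, and by the identity theorem they agree on the connected set $\Omega_i$. Thus $f_i=f'_i$, and $f=f'$ by the identity principle for slice regular functions on slice domains. Similarly, any extension $\widetilde G$ as in (ii) agrees with $L\circ\Phi_f$ on $V$. Both are holomorphic curves into $\mathcal U_6\cong\Mat_{2\times2}(\C)$, so the identity theorem on $\Omega_i$ forces $\widetilde G=L\circ\Phi_f$.
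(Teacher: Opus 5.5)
Your proof is correct, but it takes a different route from the paper's. The paper proves essentially nothing by hand. For (i)$\Rightarrow$(ii) it only asserts that the reality condition ``is exactly the reality property of the twistor transform in the affine chart.'' For (ii)$\Rightarrow$(i), the uniqueness of $f$ and the identification $\widetilde G=L\circ\Phi_f$, it refers wholesale to \cite[Theorem~5.7]{GSS14}.

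You instead stay inside the paper's coordinates, in three steps. First, you verify the reality condition by applying \eqref{eq:sigma} to \eqref{eq:Plucker-Gf}. Second, you show conversely that for $\widetilde G=L\circ\Phi$ the reality condition is equivalent to $b=-\hat c$ and $d=\hat a$ (that is, $\p\Phi=\Phi$ in the sense of \eqref{eq:Theta-function-new}), so $\Phi$ automatically has the shape of a graph matrix. Third, you recover $f$ from the single slice $g+hj$ by the Extension Lemma, and get uniqueness from the identity theorem on the connected set $\Omega_i$.

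Your route gives a self-contained argument. It also actually checks that the explicit formula \eqref{eq:sigma} is compatible with the Pl\"ucker coordinates \eqref{eq:graph-plucker} of graphs, which the paper takes for granted. And it shows that, at the level of the chart $\mathcal U_6$, the theorem is nothing more than the Splitting Lemma combined with the Extension Lemma, with the reality condition irrelevant to the uniqueness of the extension. The paper's citation instead places the statement inside the global twistor correspondence of \cite{GSS14}, which is not confined to $\mathcal U_6$, but it leaves the translation to the affine chart to the reader.
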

\begin{proof}
If \(f\in\RS\), then the construction above gives the holomorphic map
\[
\widetilde G_f=L\circ\Phi_f:\Omega_i\to\Gr_2(\C^4),
\]
with image in the affine chart \(\mathcal U_6\), and
\[
G_f=\widetilde G_f|_V.
\]
The reality condition
\[
\widetilde G_f(\overline z)=\sigma(\widetilde G_f(z))
\]
is exactly the reality property of the twistor transform in the affine chart.
Thus \(\textup{(i)}\Rightarrow\textup{(ii)}\).

Conversely, \(\textup{(ii)}\Rightarrow\textup{(i)}\), together with the
uniqueness of \(f\), is precisely the affine-chart form of
\cite[Theorem~5.7]{GSS14}.  The same theorem also identifies the extension
with \(\widetilde G_f\).
\end{proof}

\subsection{Planarity and hyperplane type}
A hyperplane section $X_z$ of $\Gr_{2}(\C^{4})$ is the intersection 
\[
X_{z}:=\Gr_{2}(\C^{4})\cap \Pi_{z}
\]
of $\Gr_{2}(\C^{4})$ with the hyperplane 
\[
\Pi_{z}:=\left\{[\zeta]\in \mb P\left(\bigwedge^{2}\C^{4}\right)\mid  z_{1}\zeta_{1}+\cdots +z_{6}\zeta_{6}=0\right\}
\]
determined by a nonzero covector $z=(z_{1},\dots,z_{6})\in \C^{6}$.  
\begin{defn}
\label{def:hyperplane-sectional}
A holomorphic curve $G:V\to \Gr_{2}(\C^{4})$ is called \emph{planar} if
$G(V)\subset X_{z}$ for some nonzero $z$.
\end{defn}
 
\begin{prop}
\label{prop:hyperplane-relation}
The twistor transform $G_{f}$ of $f\in\RS$ is
planar if and only if there exist constants $z_{1},\dots,z_{6}\in \C$, not all
zero, such that
\begin{equation}
\label{eq:hyperplane-relation}
z_{1}(g\hat g+h\hat h)+z_{2}h-z_{3}g+z_{4}\hat g+z_{5}\hat h+z_{6}=0
\end{equation}
identically on $V$, where $g,h$ are the splitting data of $f$ (Definition~\ref{dfnSplittData}).
\end{prop}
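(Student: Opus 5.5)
This is essentially an unwinding of the definitions, so the plan is to substitute the Plücker coordinates of the twistor lift into the linear equation of a hyperplane. By Definition~\ref{def:hyperplane-sectional}, $G_f$ is planar if and only if there is a nonzero covector $z=(z_1,\dots,z_6)\in\C^6$ such that $G_f(v)\in \Pi_z$ for every $v\in V$. Note that $G_f(v)$ automatically lies in $\Gr_2(\C^4)$, so $G_f(V)\subset X_z$ is the same as $G_f(V)\subset\Pi_z$. I will prove the two implications of this equivalence separately.

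First, for every $v\in V$ I take the explicit representative
\[
\zeta(v)=\bigl(g(v)\hat g(v)+h(v)\hat h(v),\,h(v),\,-g(v),\,\hat g(v),\,\hat h(v),\,1\bigr)\in\C^6
\]
of $G_f(v)$, which is given by \eqref{eq:Plucker-Gf}. It comes from Lemma~\ref{lem:graph-chart} applied to $\Phi_f(v)$ in \eqref{eq:Phi-f}, with
\[
a=g,\quad b=-\hat h,\quad c=h,\quad d=\hat g,\quad \det\Phi_f=g\hat g+h\hat h .
\]
This representative is nonzero, since $\zeta_6=1$. Membership in $\Pi_z$ depends only on the projective class, because the defining equation is homogeneous of degree one in $\zeta$. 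Therefore $G_f(v)\in\Pi_z$ if and only if $\sum_k z_k\zeta_k(v)=0$.

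Next I expand this sum:
\[
\sum_k z_k\zeta_k(v)=z_1(g\hat g+h\hat h)+z_2h-z_3g+z_4\hat g+z_5\hat h+z_6 .
\]
Evaluated at $v$, this is exactly the left-hand side of \eqref{eq:hyperplane-relation}. So planarity with covector $z$ is equivalent to \eqref{eq:hyperplane-relation} holding at every point of $V$, that is, identically on $V$. Both implications follow at once, with the same nonzero constants $z_k$.

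The only point requiring care is the sign bookkeeping between the basis $E_1,\dots,E_6$ and the entries of $\Phi_f$. This is already settled by Lemma~\ref{lem:graph-chart} and \eqref{eq:Plucker-Gf}, so I expect no real obstacle.
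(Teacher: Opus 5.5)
Your proposal is correct and is the same argument as the paper's: planarity means $G_f(V)\subset\Pi_z$ for some nonzero covector $z$, and substituting the explicit Pl\"ucker representative \eqref{eq:Plucker-Gf} into $\sum_k z_k\zeta_k=0$ gives exactly \eqref{eq:hyperplane-relation}. Your extra remarks make explicit what the paper leaves implicit: the hyperplane equation is homogeneous, the representative with $\zeta_6=1$ is nonzero, and $G_f(V)\subset\Gr_2(\C^4)$ already, so containment in $X_z$ and in $\Pi_z$ agree.
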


\begin{proof}
By Definition~\ref{def:hyperplane-sectional}, the curve $G_{f}$ is planar  if
and only if there exists a nonzero covector $z=(z_{1},\dots,z_{6})$ such that
\[
z_{1}\zeta_{1}+\cdots +z_{6}\zeta_{6}=0
\]
for every point $[\zeta_{1}(v):\dots :\zeta_{6}(v)]=G_{f}(v)$. Substituting the explicit formula
\eqref{eq:Plucker-Gf} yields exactly \eqref{eq:hyperplane-relation}.
\end{proof}

We shall need the symmetric bilinear form $B(\z,\eta)$ that polarizes \eqref{eq:Klein-eq-unified}, i.e., such that \(q(\z)=\frac12 B(\z,\z)\):
\begin{equation}\label{eq:B-unified}
B(\z,\eta)
:=
\z_{1}\eta_{6}+\z_{6}\eta_{1}-\z_{2}\eta_{5}-\z_{5}\eta_{2}+\z_{3}\eta_{4}+\z_{4}\eta_{3}\, .
\end{equation}
This bilinear form allows us to introduce the polar hyperplane to a point \([z]\in \CP^{5}\).

\begin{defn}
The hyperplane
\begin{equation}\label{eq:polar-hyperplane-unified}
H_{[z]}:=\{[\eta]\in \CP^{5}: B(z,\eta)=0\}.
\end{equation}
is the \emph{polar hyperplane} of $[z]$.
\end{defn}
Let $(\CP^{5})^{\vee}$ denote the projective dual of $\CP^{5}$, i.e., the set of all projective hyperplanes in $\CP^{5}$. The correspondence  
\begin{equation}\label{eq:kappa-unified}
\kappa:\CP^{5}\longrightarrow (\CP^{5})^{\vee},
\qquad
[z_1:z_2:z_3:z_4:z_5:z_6]\longmapsto [z_6:-z_5:z_4:z_3:-z_2:z_1]\, ,
\end{equation}
which is clearly an  isomorphism, is the \emph{polarity map}.\par
\begin{prop}\label{prop:kappa-unified}
The polarity map \(\kappa\) is equivariant with respect to the projective
\(\bigwedge^2\r\)-action of \(\PGL(2,\mb H)\) and 
 
\begin{equation}\label{eq:trans}
\Pi_{\kappa([z])}=H_{[z]}\qquad\text{ for any }[z]\in \CP^{5}\, .
\end{equation}
\end{prop}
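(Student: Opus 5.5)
The plan is to prove the two claims separately. The identity \eqref{eq:trans} comes directly from the definitions. Equivariance then follows once we know that $\GL(2,\mb H)$, acting through $\bigwedge^2\r$, preserves the bilinear form $B$ up to a scalar.

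\textbf{Step 1: the identity \eqref{eq:trans}.} Write out $B(z,\eta)$ from \eqref{eq:B-unified} as a linear form in $\eta$:
\[
B(z,\eta)=z_6\eta_1-z_5\eta_2+z_4\eta_3+z_3\eta_4-z_2\eta_5+z_1\eta_6 .
\]
Its coefficient vector is exactly $(z_6,-z_5,z_4,z_3,-z_2,z_1)$, which is the representative of $\kappa([z])$ in \eqref{eq:kappa-unified}. Hence $H_{[z]}=\Pi_{\kappa([z])}$ by the definition of $\Pi$. This step also shows that $\kappa$ is the map $[z]\mapsto[B(z,\cdot)]$ induced by the nondegenerate form $B$, so it is an isomorphism.

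\textbf{Step 2: $B$ is preserved up to a scalar.} The key observation is that, in the basis $E_1,\dots,E_6$, the form $B$ is (up to a fixed constant) the wedge pairing
\[
\bigwedge^2\C^4\times\bigwedge^2\C^4\to\bigwedge^4\C^4\cong\C,\qquad e_1\wedge e_2\wedge e_3\wedge e_4\mapsto 1.
\]
Concretely, one checks $E_1\wedge E_6=e_3\wedge e_4\wedge e_1\wedge e_2=e_{1234}$, then $E_2\wedge E_5=(-e_2\wedge e_4)\wedge(-e_1\wedge e_3)=-e_{1234}$, and similarly for $E_3\wedge E_4$; these match the signs in \eqref{eq:B-unified}. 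Since $T\xi\wedge T\eta=\det(T)\,\xi\wedge\eta$ for any $T\in\GL(4,\C)$, we get
\[
B\bigl(\textstyle\bigwedge^2T\,\xi,\ \bigwedge^2T\,\eta\bigr)=\det(T)\,B(\xi,\eta).
\]
In particular this holds for $T=\r(M)$ with $M\in\GL(2,\mb H)$. No quaternionic structure is needed here: the statement is true for all of $\GL(4,\C)$.

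\textbf{Step 3: equivariance.} Let $M$ act on $(\CP^5)^\vee$ by the contragredient action, sending a hyperplane $\Pi$ to $M\cdot\Pi$. For every $\eta$,
\[
B\bigl(\textstyle\bigwedge^2T\,z,\ \eta\bigr)=\det(T)\,B\bigl(z,\ (\bigwedge^2T)^{-1}\eta\bigr),
\]
so the hyperplane $H_{[Tz]}$ equals $(\bigwedge^2T)\,H_{[z]}$. By Step 1 this says $\kappa(M\cdot[z])=M\cdot\kappa([z])$. The real scalars $rI_4$ act trivially on both sides, so the action descends to $\PGL(2,\mb H)$.

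The only point requiring care, and the step I expect to be the main obstacle, is the sign bookkeeping in Step 2, namely matching the basis $E_i$ with the wedge pairing. Everything else is formal.
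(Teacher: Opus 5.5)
Your proof is correct. Step 1 coincides with the paper's argument for \eqref{eq:trans}, but your proof of equivariance follows a different route.

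For equivariance, the paper only cites Harris for the general fact that the polarity of a smooth quadric is equivariant under the quadric's automorphisms. It leaves implicit that $\bigwedge^2\rho(T)$ preserves the Klein quadric and hence scales $q$. You instead prove the needed fact directly, by identifying $B$ with the wedge pairing $\bigwedge^2\mathbb{C}^4\times\bigwedge^2\mathbb{C}^4\to\bigwedge^4\mathbb{C}^4$.

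The sign bookkeeping you flagged as the main obstacle works out:
\begin{itemize}
\item $E_1\wedge E_6=E_3\wedge E_4=e_{1234}$;
\item $E_2\wedge E_5=-e_{1234}$;
\item every other product $E_a\wedge E_b$ vanishes, because the index pairs overlap.
\end{itemize}
Hence $\xi\wedge\eta=B(\xi,\eta)\,e_{1234}$ exactly, with constant $1$.

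Your route buys three things. It is self-contained, it gives the explicit conformal factor $\det T$, and it proves equivariance for all of $\GL(4,\mathbb{C})$, with the $\PGL(2,\mathbb{H})$ statement as a special case. The paper's route is shorter but rests on the external citation plus that unstated step.
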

\begin{proof}
The polarity map of a smooth
quadric is known to be equivariant: see, for instance, Harris~\cite[Chapter~22]{Harris1992}.  
    The equality \(\Pi_{\kappa([z])}=H_{[z]}\) follows immediately from the coordinate
formula for \(B\): indeed,
\[
B(z,\eta)
=
z_6\eta_1-z_5\eta_2+z_4\eta_3+z_3\eta_4-z_2\eta_5+z_1\eta_6.
\]
Thus the covector defining \(H_{[z]}\) is precisely
\[
[z_6:-z_5:z_4:z_3:-z_2:z_1]=\kappa([z]).
\]
\end{proof}
Let \(A\subset (\CP^{5})^{\vee}\) be a \(G\)-orbit, that is a subset of the form
\begin{equation}
    A:=G\cdot [z]\, ,
\end{equation}
with \([z]\in  (\CP^{5})^{\vee}\).

\begin{defn}\label{def:A-planar-unified}

A holomorphic curve \(G:V\to \Gr_{2}(\C^{4})\), such that
\[
G(V)\subset X_{z}
\qquad \text{for some } [z]\in A\, ,
\]
will be called \emph{planar of type  $A$}.
\end{defn}

\section{Hyperplane sections under the \(\PGL(2,\mathbb H)\)-action}\label{secHyperPlaneSections}
This section classifies the hyperplane sections of the Klein quadric up to the natural \(\PGL(2,\mathbb H)\)-action. Via the polarity induced by the Plücker quadratic form, this amounts to classifying the corresponding orbits of poles in \(\mb P(\Lambda^2\mathbb C^4)\).
\subsection{The real quadratic model}
In this subsection, we 
will work in
\[
\mathbb P(\mathbb V)\cong\CP^5\, ,\quad
\mathbb V:=\mathbb C^6\, ,
\]
equipped with the homogeneous coordinates
\[
[p]=[p_1:p_2:p_3:p_4:p_5:p_6]\in\mathbb P(\mathbb V)\, ,
\]
that correspond to~\eqref{eqZetas} in the case $\mathbb V=\Lambda^2\mathbb{C}^4$: accordingly, the   Klein quadric~\eqref{eq:Klein-eq-unified} 
\[
Q=\{[p]\in\mathbb P(\mathbb V):q(p)=0\}
\]
is the  smooth quadric hypersurface associated with  the quadratic form
\[
q(p)=p_1p_6-p_2p_5+p_3p_4\, ,
\]
whose polarization 
\[
B(p,r):=q(p+r)-q(p)-q(r)
\]
reads
\[
B(p,r)
=
p_1r_6+p_6r_1-p_2r_5-p_5r_2+p_3r_4+p_4r_3\, ,
\]
see also~\eqref{eq:B-unified}.\par  
We will use the non-degenerate bilinear form \(B\)  to identify \(\mathbb P(\mathbb V)\) with the dual
projective space \(\mathbb P(\mathbb V^\vee)\cong(\CP^5)^\vee\).
\begin{defn}
Let \(\mathbb W\subset \mathbb V\) be a linear subspace: the subspace
\[
\mathbb W^\perp:=\{r\in \mathbb V: B(w,r)=0\ \text{for all } w\in \mathbb  W\}
\]
will be referred to as the   
\emph{orthogonal complement} of $\mathbb W$.
\end{defn}
In particular, we let 
\[
p^\perp:=\spa\{p\}^\perp=\{[r]\in\mathbb P(\mathbb V):B(p,r)=0\}
\]
for any point
\([p]\in\mathbb P(\mathbb V)\), thus obtaining a point--to--hyperplane correspondence
\[
[p]\longmapsto p^\perp\, .
\]
\begin{defn}
    We call \([p]\) the \emph{pole} of the hyperplane \(p^\perp\) and \(p^\perp\) the \emph{polar hyperplane} of \([p]\).
\end{defn}
\begin{defn}
    The projectivization $\mathbb{P}(\mathbb W^\perp)$ of the orthogonal complement of  $\mathbb W$ will be referred to as the \emph{polar space} of $\mathbb P(\mathbb W)$.
\end{defn}
It is worth observing that  \(\mathbb P(\mathbb W)^\perp\) is the intersection of all polar hyperplanes
associated with points of \(\mathbb P(W)\):
\[
\mathbb P(\mathbb W)^\perp=\mathbb P(\mathbb W^\perp)
=
\bigcap_{[ w]\in \mathbb P(\mathbb W)} w^\perp .
\]
\begin{defn}
    A nonzero vector \(p\in \mathbb V\) is called \emph{isotropic}, or \emph{null}, with respect to \(q\)
if
\[q(p)=0.\]
\end{defn}
The following are equivalent:
\begin{itemize}
    \item[(i)] $p$ is null;
\item[(ii)] the point \([p]\in \mathbb P(\mathbb V)\) lies on the Klein quadric
\(Q\);
\item[(iii)]
$[p]\in p^\perp$. 
\end{itemize}
\begin{defn}
A nonzero vector \(p\in\mathbb V\) is called \emph{anisotropic} if
\[q(p)\neq 0.
\]
\end{defn}
\begin{defn}
    A linear subspace \(\mathbb W\) of $\mathbb V$ is a \emph{totally isotropic subspace} with respect to \(B\) if
\[
B|_{\mathbb W\times \mathbb W}=0\, .
\]
\end{defn}
Obviously, these facts are equivalent:
\begin{itemize}
    \item[(i)] $\mathbb W$ is totally isotropic;
\item[(ii)] $B(w_1,w_2)=0$
 for all $w_1,w_2\in \mathbb W$;
\item[(iii)] $\mathbb W\subset \mathbb W^\perp$;
\item[(iv)] $\mathbb P(\mathbb W)\subset \mathbb P(\mathbb W)^\perp$;
\item[(v)] every point of \(\mathbb P(\mathbb W)\) lies in the polar hyperplane
of every other point of \(\mathbb P(\mathbb W)\);
\item[(vi)] $q|_\mathbb W=0$;
\item[(vii)] $\mathbb P(\mathbb W)\subset Q$.
\end{itemize}
Since \(B\) is non-degenerate,  one has
\[
\dim \mathbb W+\dim\mathbb W^\perp=\dim \mathbb V
\]
for every linear subspace \(\mathbb W\subset \mathbb V\): in particular, if \(\mathbb W\) is totally isotropic, then \(\mathbb W\subset \mathbb W^\perp\), which implies
\[
\dim \mathbb W\leq \dim \mathbb W^\perp\, ,
\]
and then
\[
2\dim \mathbb W\leq \dim \mathbb V=\dim\mathbb C^6=6\, .
\]
It follows that
\[
\dim \mathbb W\leq 3
\]
for a totally isotropic subspace $\mathbb W\subset \mathbb V$, 
that is, 
\[
\dim L=\dim \mathbb W-1\leq 2\, ,
\]
where \(L=\mathbb P(\mathbb W)\subset \mathbb P(\mathbb V)\cong \CP^5\)
is a projective linear subspace contained in the Klein quadric \(Q\):  the maximal projective linear subspaces contained in the Klein
quadric are projective planes \(\mathbb{CP}^2\).

The anti-linear involution
\[
\sigma:\mathbb V\to \mathbb V
\]
introduced earlier in~\eqref{eq:sigma},  reads
\[
\sigma(p_1,p_2,p_3,p_4,p_5,p_6)
=
(\overline p_1,\overline p_5,-\overline p_4,
-\overline p_3,\overline p_2,\overline p_6)
\]
in the present setting. 
If we denote by
\[
 V_{\mathbb R}:=\operatorname{Fix}(\sigma)
\]
its fixed-point set, then
\[
\mathbb  V=V_{\mathbb R}\otimes_{\mathbb R}\mathbb C.
\]
It is easy to see that \(q|_{V_{\mathbb R}}\) has real signature \((1,5)\):
indeed, a vector  \(p\in V_{\mathbb R}\) has the form
\[
p=(r,c,-b,\overline b,\overline c,s),
\qquad
r,s\in\mathbb R,\quad b,c\in\mathbb C\, ,
\]
and hence
\[
q(p)=rs-|b|^2-|c|^2.
\]
\begin{lem}
Every totally isotropic real subspace of \(V_{\mathbb R}\)
has dimension at most one.
\end{lem}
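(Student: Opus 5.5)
The plan is to use the signature computation carried out just before the statement: every \(p\in V_{\mathbb R}\) has the form \(p=(r,c,-b,\overline b,\overline c,s)\) with \(r,s\in\mathbb R\) and \(b,c\in\mathbb C\), and then \(q(p)=rs-|b|^2-|c|^2\). Here a ``totally isotropic real subspace'' means a real subspace \(W\subset V_{\mathbb R}\) with \(B|_{W\times W}=0\), equivalently \(q|_W=0\). Since \(B\) is obtained from \(q\) by polarization, these two conditions are equivalent over \(\mathbb R\) as well.

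First I will diagonalize \(q|_{V_{\mathbb R}}\). Writing \(r=u+t\) and \(s=u-t\), we get \(rs=u^2-t^2\), so in the real coordinates \((u,t,\operatorname{Re}b,\operatorname{Im}b,\operatorname{Re}c,\operatorname{Im}c)\)
\[
q(p)=u^2-\bigl(t^2+|b|^2+|c|^2\bigr).
\]
This is a real quadratic form of signature \((1,5)\) on the \(6\)-dimensional real space \(V_{\mathbb R}\).

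Next I apply the standard signature argument. Let \(W\subset V_{\mathbb R}\) be totally isotropic, and let \(N\subset V_{\mathbb R}\) be the \(5\)-dimensional subspace \(\{u=0\}\). On \(N\) the form \(q\) is negative definite. Hence \(W\cap N=\{0\}\): any \(w\in W\cap N\) satisfies \(q(w)=0\), and negative definiteness then forces \(w=0\). Consequently
\[
\dim W+\dim N\le \dim V_{\mathbb R}=6,
\]
which gives \(\dim W\le 1\).

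To make the argument more concrete, I can check it directly. Suppose \(w_1,w_2\in W\) are linearly independent. Some nonzero combination \(w=\lambda w_1+\mu w_2\) has \(u\)-coordinate equal to zero. Then \(0=q(w)=-(t^2+|b|^2+|c|^2)\), so all remaining coordinates vanish as well, and \(w=0\). This contradicts linear independence.

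No step is a genuine obstacle. The only point needing care is that \(V_{\mathbb R}\) really has real dimension \(6\) and that the parametrization above covers all of it, which follows from the explicit form of \(\sigma\). The bound is sharp, since \(p=(1,0,0,0,0,0)\), with \(r=1\) and \(s=b=c=0\), spans a real null line.
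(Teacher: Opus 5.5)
Your argument is correct and is essentially the paper's: the paper just observes that \((V_{\mathbb R},q)\) has signature \((1,5)\), hence Witt index one, and cites \cite[Proposition~8.11]{EKM08}. You instead make this self-contained by exhibiting the diagonal form \(u^2-t^2-|b|^2-|c|^2\) (which also verifies the signature claim the paper leaves as ``easy to see'') and showing directly that a totally isotropic \(W\) meets the \(5\)-dimensional negative-definite subspace \(\{u=0\}\) only in \(0\), so \(\dim W\le 1\).
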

\begin{proof}
The real quadratic space \((V_{\mathbb R},q)\) of signature \((1,5)\) has Witt
index  one: the result then follows from~\cite[Proposition~8.11]{EKM08}.
\end{proof}
Under the standard identification of quaternionic M\"obius transformations with
the conformal group of \(S^4\), the group
\[
G=\PGL(2,\mathbb H)
\]
acts projectively as the identity component of the real orthogonal group $$O(V_\R,q):=\{A\in \GL_\R(V_\R)\mid A^*(q)=q\}$$ of
\((V_{\mathbb R},q)\). 
{Here \(A^*q\) denotes the pull-back of the quadratic form \(q\) by the
real-linear automorphism \(A\). More explicitly,
\[
(A^*q)(v):=q(Av),\qquad v\in V_{\mathbb R}.
\]
Thus the condition \(A^*q=q\) means that \(A\) preserves the quadratic form
\(q\), i.e.
\[
q(Av)=q(v),\qquad \forall\, v\in V_{\mathbb R}.
\]
Equivalently, if \(B\) is the polar bilinear form associated with \(q\), then
\[
B(Av,Aw)=B(v,w),\qquad \forall\, v,w\in V_{\mathbb R}.
\]}
Every vector \(p\in\mathbb V\) can be written uniquely as
\[
p=x+iy,
\qquad
x,y\in V_{\mathbb R},
\]
where
\[
x=\frac{p+\sigma p}{2},
\qquad
y=\frac{p-\sigma p}{2i}.
\]

Multiplying \(p=x+iy\) by a non-zero complex scalar
\(\lambda=a+ib\) changes the pair \((x,y)\) to
\[
(x,y)\mapsto (ax-by,\ bx+ay).
\]
Thus projectivizing \(p\) allows rotations and common rescalings of the ordered
pair \((x,y)\).

For \(p=x+iy\), define the real Gram matrix
\[
M_p:=
\begin{pmatrix}
B(x,x)&B(x,y)\\
B(x,y)&B(y,y)
\end{pmatrix}.
\]
This is the Gram matrix of the ordered pair \((x,y)\) with respect to the real
bilinear form \(B|_{V_{\mathbb R}}\).

Since \(\sigma p=x-iy\), we have
\[
B(p,\sigma p)
=
B(x+iy,x-iy)
=
B(x,x)+B(y,y).
\]
Therefore
\[
B(p,\sigma p)=\operatorname{tr}M_p.
\]

Similarly,
\[
B(p,p)
=
B(x,x)-B(y,y)+2iB(x,y).
\]
Hence
\[
|B(p,p)|
=
\sqrt{(B(x,x)-B(y,y))^2+4B(x,y)^2}.
\]

For \(q(p)\neq0\), equivalently \(B(p,p)\neq0\), define
\[
\tau([p])
:=
\frac{B(p,\sigma p)}{|B(p,p)|}
=
\frac{B(p,\sigma p)}{2|q(p)|}.
\]
In terms of \(M_p\), this is
\[
\tau([p])
=
\frac{\operatorname{tr}M_p}
{\sqrt{(M_{11}-M_{22})^2+4M_{12}^2}}.
\]
Thus \(\tau\) is the normalized trace of the real Gram matrix of \(x,y\).

The quantity \(\tau\) is well-defined on projective points.  Indeed, if
\(p\) is replaced by \(\lambda p\),  {where
\(\lambda\in\mathbb C^\times\),} then
\[
B(\lambda p,\sigma(\lambda p))
=
|\lambda|^2B(p,\sigma p),
\]
whereas
\[
|B(\lambda p,\lambda p)|
=
|\lambda|^2|B(p,p)|.
\]
Thus the ratio is unchanged.

\subsection{Orbit classification in \(\mb P(\Lambda^2\mathbb C^4)\)}

We introduce below  a list of special points of $\mathbb V$.
\begin{defn}
A $\PGL(2,\mathbb{H})$--orbit $\PGL(2,\mathbb{H})\cdot [p]\subset\mathbb{P}  \mathbb V$ will be called:
\begin{enumerate}[label=\textup{(A\arabic*)}]
\item \textbf{real null}, if $p$ equals
\[
p_N:=[0:0:0:0:0:1];
\]
\item \textbf{non-real isotropic}, if $p$ equals
\[
p_{Q\smallsetminus N}:=[-1:0:-i:i:0:1];
\]
\end{enumerate}
\begin{enumerate}[label=\textup{(B\arabic*)}]
\item \textbf{real time-like}, if $p$ equals
\[
p_+:=[1:0:0:0:0:1];
\]
\item \textbf{real space-like}, if $p$ equals
\[
p_-:=[1:0:0:0:0:-1];
\]
\item \textbf{non-real degenerate}, if $p$ equals
\[
p_{\mathrm{deg}}:=[0:0:-i:i:0:1];
\]
\item \textbf{non-real Lorentzian }, if $p$ equals
\[
p_\theta=[e^{i\theta}:0:0:0:0:1]\, ,\quad 0<\theta<\pi\, ;
\]
\item \textbf{non-real negative-definite},  if $p$ equals
\[
p_\lambda:=[1:0:-i\lambda:i\lambda:0:-1]\, ,\quad 0<\lambda<1.
\]
\end{enumerate}
\end{defn}
\begin{defn}
    We say that $p\in \mathbb V$ (or $[p]\in\mathbb{P}\mathbb V$) has \emph{type} $p_0$ (or \emph{type} $A$) if $[p]\in A:=\PGL(2,\mathbb{H})\cdot[p_0]$.
\end{defn}
We prove now that the types of all the elements of $\mathbb V$ appears exaclty once in the above list.
\begin{thm}\label{thm:pole-orbit-types}
Each $\PGL(2,\mathbb{H})$--orbit  in $\mathbb{P}(\mathbb V)$ appears exactly once in the above list.
\end{thm}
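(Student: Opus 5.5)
The plan is to translate the problem into real Lorentzian geometry via the decomposition $p=x+iy$ with $x,y\in V_{\mathbb R}$, and to use the identification of the projective action of $\PGL(2,\mathbb H)$ with the identity component $SO^+(V_{\mathbb R},q)$ of the real orthogonal group of signature $(1,5)$, extended $\mathbb C$-linearly to $\mathbb V$. Since this action commutes with $\sigma$, it acts on the pair $(x,y)$ diagonally. Projectivization adds the action of $\mathbb C^\times$, i.e.\ rotations and common rescalings of $(x,y)$. Hence an orbit of $[p]$ is determined by two pieces of data, taken up to $SO^+(1,5)$: the real subspace $W_p:=\spa_{\mathbb R}\{x,y\}$, and the class of the pair $(x,y)$ modulo $\mathbb R_{>0}\cdot SO(2)$. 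The invariants I will use are $\dim W_p\in\{1,2\}$ (equivalently, whether $[p]$ is real, $[\sigma p]=[p]$), the signature and rank of $B|_{W_p}$, the vanishing of $q(p)$, and, when $q(p)\neq0$, the quantity $\tau([p])$ already shown to be projectively well defined.

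First I treat the real case $\dim W_p=1$. Then $[p]=[x]$, and $SO^+(1,5)$ acts transitively on the rays of null, time-like and space-like vectors up to the sign allowed by projectivization. This gives exactly (A1), (B1) and (B2), which are distinguished by the sign of $q(x)$.

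Next I treat the non-real case $\dim W_p=2$ and classify the plane $W_p$. By the Lemma on Witt index one, $B|_{W_p}$ cannot vanish identically. Since the signature is $(1,5)$, $B|_{W_p}$ cannot be positive definite or of rank one with positive part. This leaves three possibilities: degenerate of rank one and negative semidefinite, Lorentzian of signature $(1,1)$, or negative definite. By Witt's extension theorem, any isometry between two such planes extends to an element of $O(1,5)$. Composing with a reflection or time-reversal supported in $W_p^\perp$, which has dimension $4$ and always contains both space-like vectors and, when needed, a time-like direction, I can land in $SO^+(1,5)$. I will even be able to arrange that the induced map on $W_p$ is orientation-reversing when desired.

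It then remains to normalize the Gram matrix $M_p$ under the action of $\mathbb R_{>0}\cdot O(2)$ on the basis $(x,y)$; the orientation-reversing part is supplied by the previous step. The cases are as follows.
\begin{itemize}
\item Degenerate $W_p$: $M_p$ can be rotated to $\mathrm{diag}(0,-1)$, and this gives $p_{\deg}$, i.e.\ (B3).
\item Negative definite $W_p$: if $M_p$ is scalar, then $q(p)=0$ and I get $p_{Q\smallsetminus N}$, i.e.\ (A2). Otherwise the normalized eigenvalue ratio is a single invariant, which is equivalent to $\tau$, and it is realized by $p_\lambda$ with $0<\lambda<1$, i.e.\ (B5).
\item Lorentzian $W_p$: $W_p$ contains two null lines $\ell_1,\ell_2$. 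Here $p=a_1u_1+a_2u_2$ with $u_k\in\ell_k$ and $a_k\in\mathbb C^\times$, and the boost in $W_p$ rescales $u_1,u_2$ oppositely. The remaining invariant is the phase of $a_1/a_2$ modulo the swap $\ell_1\leftrightarrow\ell_2$ (orientation reversal) and modulo the real case. This gives exactly $\theta\in(0,\pi)$, realized by $p_\theta$, i.e.\ (B4); $\theta=0$ and $\theta=\pi$ are the real cases (B1) and (B2).
\end{itemize}

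Distinctness follows from the invariants: $\dim W_p$, the rank and signature of $B|_{W_p}$, whether $q(p)=0$, and $\tau$. A direct computation shows that $\tau(p_\theta)$ is a strictly monotone function of $\theta$ and $\tau(p_\lambda)$ is a strictly monotone function of $\lambda$, and that these value ranges separate the families. I expect the main obstacle to be the passage from $O(1,5)$ to the identity component. One part is ensuring that the swap of null lines (respectively the orientation reversal of $W_p$) can be realized inside $SO^+(1,5)$. The other is checking carefully that the isotropy of the quadric does not identify $\theta$ with $\pi-\theta$ or produce extra orbits. I would handle this by explicitly writing a compensating element in $W_p^\perp$ for each case.
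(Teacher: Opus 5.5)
Your route is the same as the paper's: split $p=x+iy$, use $M_p$ and Witt index one to exclude impossible planes, use Witt's extension theorem for transitivity, and separate orbits by the signature of $B|_{W_p}$, by whether $q(p)=0$, and by $\tau$. You are right that passing from $O(V_{\mathbb R},q)$ to the identity component is the delicate point; the paper's proof applies Witt's theorem and passes over it.

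However, the mechanism you propose for that step fails in two of the three non-real cases. If $W_p$ is Lorentzian, then $W_p^\perp$ is negative definite. If $W_p$ is degenerate with radical $\mathbb R n$, then $W_p^\perp\subset n^\perp$ is negative semidefinite. In both cases $W_p^\perp$ contains no time-like vector. In fact every isometry fixing $W_p$ pointwise fixes a nonzero time-like or null vector, and therefore preserves time orientation. So the Witt extension of $(x,y)\mapsto(x',y')$ may reverse time orientation, and no \emph{time-reversal supported in $W_p^\perp$} exists to correct it. In the degenerate case the equality $M_p=M_{p'}=\mathrm{diag}(0,-2)$ does not rule this out: take $x$ future-null and $x'$ past-null.

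The missing ingredient is the scalar $-1\in\mathbb C^\times$ that projectivization provides. The vector $-p'$ has the same Gram matrix as $p'$, and $-\mathrm{id}$ on $W_{p'}$ interchanges future and past. After replacing $p'$ by $-p'$ where needed, the Witt extension preserves time orientation. A reflection in a space-like vector of $W_p^\perp$ then fixes the determinant.

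In your Lorentzian analysis, the same role is played by choosing $u_1,u_2$ both future-pointing, i.e.\ $B(u_1,u_2)>0$. This normalization is what makes $|\arg(a_1/a_2)|$ a well-defined invariant and keeps $\theta$ and $\pi-\theta$ apart, in agreement with $\tau(p_\theta)=\cos\theta$. With this correction your argument is complete.

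Separately, passing to $\mathbb R_{>0}\cdot O(2)$ is unnecessary. Conjugation by $SO(2)$ already diagonalizes a symmetric $2\times 2$ matrix with the eigenvalues in either order, since the rotation by $\pi/2$, i.e.\ $p\mapsto ip$, swaps them.
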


\begin{proof}
(A1) Assume that \(q(p)=0\).  Since
\[
B(p,p)=2q(p),
\]
this is equivalent to
\[
B(p,p)=0.
\]
Using the formula
\[
B(p,p)
=
B(x,x)-B(y,y)+2iB(x,y),
\]
we obtain
\[
B(x,x)=B(y,y),
\qquad
B(x,y)=0.
\]
Thus \(M_p\) is a scalar matrix.

If this scalar is zero, then \(q(x)=q(y)=0\) and \(x\) and \(y\) are  orthogonal.
Because a real quadratic space of signature \((1,5)\) has Witt index one,
a totally isotropic subspace has dimension at most one.  Hence \(x\) and
\(y\) are real-linearly dependent.  Therefore \([p]\) has a real null
representative.  This gives the orbit represented by
\[
p_N=[0:0:0:0:0:1].
\]

(A2) If the scalar is negative, then \(x\) and \(y\) span a negative definite
two-plane, and they are orthogonal with equal negative length.  This gives
the non-real isotropic orbit.  A representative is
\[
p_{Q\smallsetminus N}=[-1:0:-i:i:0:1].
\]
Indeed,
\[
q(p_{Q\smallsetminus N})
=
(-1)\cdot 1+(-i)i
=
-1+1
=
0,
\]
and
\[
[\sigma p_{Q\smallsetminus N}]\neq[p_{Q\smallsetminus N}].
\]

To see that this representative gives the whole orbit, write
\[
p_0=(-1,0,-i,i,0,1).
\]
Then
\[
x_0=\frac{p_0+\sigma p_0}{2}=(-1,0,0,0,0,1),
\qquad
y_0=\frac{p_0-\sigma p_0}{2i}=(0,0,-1,1,0,0),
\]
and
\[
B(x_0,x_0)=B(y_0,y_0)=-2,
\qquad
B(x_0,y_0)=0.
\]
After multiplying \(p\) by a positive real scalar, we may also assume
\[
B(x,x)=B(y,y)=-2,
\qquad
B(x,y)=0.
\]
Hence the map \(x\mapsto x_0,\ y\mapsto y_0\) is an isometry between the
two negative definite planes.  By Witt's extension theorem \cite[Chapter~III, Theorem~3.9]{Artin1957}, this isometry
extends to an isometry of \((V_{\mathbb R},q)\).  Therefore \([p]\) lies in
the \(G\)-orbit of \(p_{Q\smallsetminus N}\).

Finally, the scalar cannot be positive.  Indeed, it would make
\[
\operatorname{span}_{\mathbb R}\{x,y\}
\]
a positive definite two-plane.  This is impossible because
\((V_{\mathbb R},q)\) has signature \((1,5)\), and hence has positive index
one.

\medskip

(B) Now assume that \(q(p)\neq0\).  If
\[
[\sigma p]=[p],
\]
then \([p]\) has a real representative.  A real non-isotropic line satisfies either
$q(p)>0$ or $q(p)<0$.\par 
(B1-B2) These two possibilities are represented by
\[
p_+=[1:0:0:0:0:1],
\qquad
p_-=[1:0:0:0:0:-1].
\]
They satisfy
\[
q(p_+)=1,
\qquad
q(p_-)=-1.
\]
Moreover,
\[
\tau(p_+)=1,
\qquad
\tau(p_-)=-1.
\]

It remains to consider the case
\[
q(p)\neq0,
\qquad
[\sigma p]\neq[p].
\]
Then \(x\) and \(y\) are real-linearly independent, and
\[
E_p:=\operatorname{span}_{\mathbb R}\{x,y\}
\]
is a genuine two-dimensional real plane in \(V_{\mathbb R}\).
Since \(V_{\mathbb R}\) has signature \((1,5)\), the restriction of \(q\) to
\(E_p\) is either Lorentzian $(1,1)$, degenerate $(0,1,1)$, or negative definite $(0,2)$.  It cannot be
positive definite.

The range of \(\tau\) is determined by the identity
\[
(\operatorname{tr}M_p)^2
-
\bigl((M_{11}-M_{22})^2+4M_{12}^2\bigr)
=
4\det M_p.
\]
Thus,
\[
\det M_p<0
\quad\Longleftrightarrow\quad
-1<\tau<1,
\]
which is the Lorentzian case;
\[
\det M_p=0
\quad\Longleftrightarrow\quad
\tau=-1
\]
in the non-real degenerate case; and
\[
\det M_p>0,\quad \operatorname{tr}M_p<0
\quad\Longleftrightarrow\quad
\tau<-1,
\]
which is the negative definite case.\par

(B3) For the degenerate case, take
\[
p_{\mathrm{deg}}=[0:0:-i:i:0:1].
\]
Then
\[
q(p_{\mathrm{deg}})
=
(-i)i
=
1,
\]
and a direct computation gives
\[
\tau(p_{\mathrm{deg}})=-1.
\]
The real and imaginary parts span a degenerate real plane.\par
(B4) For the Lorentzian case, take
\[
p_\theta=[e^{i\theta}:0:0:0:0:1],
\qquad
0<\theta<\pi.
\]
Then
\[
q(p_\theta)=e^{i\theta}.
\]
Also,
\[
\sigma p_\theta=[e^{-i\theta}:0:0:0:0:1],
\]
and hence
\[
B(p_\theta,\sigma p_\theta)
=
e^{i\theta}+e^{-i\theta}
=
2\cos\theta.
\]
Since
\[
|B(p_\theta,p_\theta)|
=
|2q(p_\theta)|
=
2,
\]
we get
\[
\tau(p_\theta)=\cos\theta.
\]
As \(0<\theta<\pi\), this gives precisely
\[
-1<\tau<1.
\]\par 
(B5) For the negative definite case, take
\[
p_\lambda=[1:0:-i\lambda:i\lambda:0:-1],
\qquad
0<\lambda<1.
\]
Then
\[
q(p_\lambda)
=
1\cdot(-1)+(-i\lambda)(i\lambda)
=
-1+\lambda^2
=
-(1-\lambda^2).
\]
Moreover,
\[
B(p_\lambda,\sigma p_\lambda)
=
-2(1+\lambda^2),
\]
whereas
\[
|B(p_\lambda,p_\lambda)|
=
2(1-\lambda^2).
\]
Therefore
\[
\tau(p_\lambda)
=
-\frac{1+\lambda^2}{1-\lambda^2}
<
-1.
\]
As \(0<\lambda<1\), this realizes all values \(\tau<-1\).

Finally, Witt's extension theorem \cite[Chapter~III, Theorem~3.9]{Artin1957} and Sylvester's law of inertia \cite[Chapter~I, Proposition 3.2 (3)]{Lam2005} imply that these normal forms exhaust the
orbits.  Indeed, any isometry between two subspaces of a non-degenerate
quadratic space extends to an isometry of the whole space.  Therefore two points \([p]\) and \([p']\) with
the same normal form for the real Gram matrix of their real and imaginary
parts lie in the same orbit.  The preceding list gives all possible normal
forms in signature \((1,5)\).  Hence there are no further orbits.
\end{proof}

\subsection{Dual hyperplane types}
The corresponding classification of hyperplanes, that are points of \(\mb P(\mb V^\vee)=(\CP^5)^\vee\), is
obtained via the isomorphism \(\kappa\) given earlier in~\eqref{eq:kappa-unified}.
\begin{defn}
\label{def:pole-types}
A hyperplane  \([z]\in(\CP^5)^\vee\) has type $p_0$ if such is the type of its pole
\[
[p_z]=\kappa^{-1}([z])\, .
\]
\end{defn}
Thanks to Theorem~\ref{thm:pole-orbit-types}, the complete list of hyperplanes types is as follows:
\begin{enumerate}[label=\textup{(\alph*)}]
\item[(A1)] \emph{\(N\)-tangent type}:
\[
q(p_z)=0,
\qquad
[\sigma p_z]=[p_z].
\]
Equivalently, the pole lies on
\[
N:=Q\cap \mb P(V_{\R})\cong \HP^1.
\]

\item[(A2)] \emph{\(Q\smallsetminus N\)-tangent type}:
\[
q(p_z)=0,
\qquad
[\sigma p_z]\neq[p_z].
\]

\item[(B1)] \emph{real positive type}:
\([p_z]\) is projectively real and, for a real representative
\(u\in V_{\R}\) of \([p_z]\),
\[
q(u)>0.
\]
This is often called real time-like type.

\item[(B2)] \emph{real negative type}:
\([p_z]\) is projectively real and, for a real representative
\(u\in V_{\R}\) of \([p_z]\),
\[
q(u)<0.
\]
This is often called real space-like type.

\item[(B3)] \emph{non-real Lorentzian type}:
\[
q(p_z)\neq0,\qquad [\sigma p_z]\neq[p_z],
\qquad -1<\tau([p_z])<1.
\]

\item[(B4)] \emph{non-real degenerate type}:
\[
q(p_z)\neq0,\qquad [\sigma p_z]\neq[p_z],
\qquad \tau([p_z])=-1.
\]

\item[(B5)] \emph{non-real negative-definite type}:
\[
q(p_z)\neq0,\qquad [\sigma p_z]\neq[p_z],
\qquad \tau([p_z])<-1.
\]
\end{enumerate}

For convenience, we provide
the standard pole representatives and their corresponding dual covectors are
as follows:
\[
\begin{array}{l|l|l}
\text{type} & \text{pole representative }p & \text{dual covector representative }z=\kappa(p)\\
\hline
N\text{-tangent}
&
[0:0:0:0:0:1]
&
[1:0:0:0:0:0]
\\[1mm]
Q\smallsetminus N\text{-tangent}
&
[-1:0:-i:i:0:1]
&
[1:0:i:-i:0:-1]
\\[1mm]
\text{real positive}
&
[1:0:0:0:0:1]
&
[1:0:0:0:0:1]
\\[1mm]
\text{real negative}
&
[1:0:0:0:0:-1]
&
[1:0:0:0:0:-1]
\\[1mm]
\text{non-real degenerate}
&
[0:0:-i:i:0:1]
&
[1:0:i:-i:0:0]
\\[1mm]
\text{non-real Lorentzian}
&
[e^{i\theta}:0:0:0:0:1]
&
[1:0:0:0:0:e^{i\theta}]
\\[1mm]
\text{non-real negative-definite}
&
[-1:0:-i\lambda:i\lambda:0:1]
&
[1:0:i\lambda:-i\lambda:0:-1]
\end{array}
\]
where \(0<\theta<\pi\) and \(0<\lambda<1\).

\section{Planar twistor transforms of slice-regular polynomials}\label{secApplications1}
The theory of slice regular functions of one quaternionic variable originates from C.~G.~Cullen's notion of regularity\cite{C65} and then
was developed into a modern function theory over the quaternions\cite{GGSD06,GGS07}. A fundamental feature of this framework is that it naturally contains quaternionic polynomials and power series. Indeed, G.~Gentili and D.~C.~Struppa \cite{GGSD06} explicitly showed that the basic monomials \(q^{n}a\) are regular, and hence every polynomial
\[
f(q)=\sum_{m=0}^{n} q^{m}a_{m}, \qquad a_{m}\in\mathbb H,
\]
with coefficients on the right is slice regular. In this section, we  apply the preceding projective-geometric framework to slice-regular polynomials. 
\subsection{A coefficient-space criterion for planarity}

\begin{lem}\label{lem:real-linear}
If \(z=(z_1,\ldots,z_6)\in\mathbb C^6\), then 
\[\Lambda_{z}(b+cj)
:=
z_2 c-z_3 b+z_4\overline b+z_5\overline c,
\]
where $b,c\in\mathbb C$, 
defines an $\mathbb{R}$-linear map $\Lambda_z:\mathbb{H}\to\mathbb C$. Conversely, for every $\mathbb{R}$-linear map $\Lambda:\mathbb{H}\to\mathbb C $ there exists $z\in\mathbb C^6$, such that $\Lambda=\Lambda_z$.
\end{lem}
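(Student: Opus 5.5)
The plan has two parts: a direct check of $\mathbb R$-linearity for $\Lambda_z$, and a dimension/basis argument for the converse.

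For the first part, I use that each quaternion has a unique decomposition $q=b+cj$ with $b,c\in\mathbb C$, by \eqref{eqQatAsComplexPair}. So $\Lambda_z$ is well defined on $\mathbb H$. The maps $q\mapsto b$ and $q\mapsto c$ are $\mathbb R$-linear, because the decomposition is compatible with real scalars and sums. Complex conjugation $\mathbb C\to\mathbb C$ is $\mathbb R$-linear, and so is multiplication by a fixed complex number $z_k$. Hence $\Lambda_z$ is a sum of compositions of $\mathbb R$-linear maps, and is therefore $\mathbb R$-linear. The coordinates $z_1$ and $z_6$ do not enter, so they can be chosen freely.

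For the converse, I will not argue by dimension count alone; instead I write an explicit preimage. Let $\Lambda:\mathbb H\to\mathbb C$ be $\mathbb R$-linear. Restricted to the copy $\{b\}\cong\mathbb C$, it is an $\mathbb R$-linear map $\mathbb C\to\mathbb C$. Every such map has the form $b\mapsto \alpha b+\beta\overline b$ with
\[
\alpha=\tfrac12\bigl(\Lambda(1)-i\Lambda(i)\bigr),\qquad \beta=\tfrac12\bigl(\Lambda(1)+i\Lambda(i)\bigr),
\]
as one checks on the basis $1,i$. Similarly, $c\mapsto\Lambda(cj)$ has the form $c\mapsto\gamma c+\delta\overline c$, with $\gamma,\delta$ given by the same formulas using $\Lambda(j)$ and $\Lambda(ij)$. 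By additivity, $\Lambda(b+cj)=\Lambda(b)+\Lambda(cj)$. So I set
\[
z_3=-\alpha,\quad z_4=\beta,\quad z_2=\gamma,\quad z_5=\delta,
\]
and take $z_1=z_6=0$ (any values would do). This gives $\Lambda=\Lambda_z$.

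The only step needing care is the verification of the $\alpha,\beta$ formula. I would do it by evaluating $\alpha b+\beta\overline b$ at $b=1$ and $b=i$ and comparing with $\Lambda(1)$ and $\Lambda(i)$. As a cross-check, the real dimensions match: $\operatorname{Hom}_{\mathbb R}(\mathbb H,\mathbb C)$ has real dimension $8$, and $(z_2,z_3,z_4,z_5)\mapsto\Lambda_z$ is a surjection from $\mathbb C^4\cong\mathbb R^8$, hence in fact a bijection. This also shows the representation is unique up to the free choice of $z_1,z_6$.
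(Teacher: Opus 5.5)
Your proof is correct and is essentially the paper's argument. The paper writes $\Lambda(x+yi+uj+vk)=Ax+By+Cu+Dv$ and solves for $z_2,\dots,z_5$. Your $\alpha,\beta,\gamma,\delta$ give exactly its values $z_3=\tfrac12(-A+iB)$, $z_4=\tfrac12(A+iB)$, $z_2=\tfrac12(C-iD)$, $z_5=\tfrac12(C+iD)$, where $A=\Lambda(1)$, $B=\Lambda(i)$, $C=\Lambda(j)$, $D=\Lambda(k)$; your dimension count for uniqueness of $(z_2,\dots,z_5)$ matches the paper's closing remark.
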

\begin{proof}
The first statement is immediate. For the converse, write
\[
q=x+yi+uj+vk=(x+yi)+(u+vi)j,
\]
so that $b=x+yi$ and $c=u+vi$. Then
\[
\Lambda(q)=(-z_3+z_4)x-i(z_3+z_4)y+(z_2+z_5)u+i(z_2-z_5)v.
\]
Now let $\Lambda:\mathbb{H}\to\mathbb C$ be $\mathbb{R}$-linear. Then there exist unique complex numbers $A,B,C,D\in\mathbb C$ such that
\[
\Lambda(x+yi+uj+vk)=Ax+By+Cu+Dv.
\]
If we set
\[
z_2:=\frac{C-iD}{2},\qquad z_5:=\frac{C+iD}{2},
\qquad
z_3:=\frac{-A+iB}{2},\qquad z_4:=\frac{A+iB}{2},
\]
then the preceding formula yields $\Lambda=\Lambda_z$. Uniqueness follows because the coefficients of $x,y,u,v$ determine $z_2,z_3,z_4,z_5$ uniquely.
\end{proof}

\begin{rem}\label{rem:middle-coordinates}
The notation \(\Lambda_z\) deliberately uses a six-tuple \(z\in\mathbb C^6\), although the map only depends on the middle four coordinates
\((z_2,z_3,z_4,z_5)\). This convention is chosen to match the Pl\"ucker
hyperplane equation later, where the same six-tuple \(z\) will be used as a
covector defining a hyperplane in \(\mathbb P(\Lambda^2\mathbb C^4)\).
\end{rem}
\begin{thm}\label{thm:Hyperplane-sectionality-criterion}
Let
\[
f(q)=\sum_{m=0}^n q^m a_m
\]
be a non-constant slice-regular polynomial, and set
\[
S_f:=\spa_{\R}\{a_1,\dots,a_n\}\subset \mb H.
\]
Then the following conditions are equivalent:
\begin{enumerate}
\renewcommand{\labelenumi}{\rm(\theenumi)}
    \item the twistor transform $G_f$ is planar;
    
    \item there exists a non-zero $\R$-linear map $\Lambda:\mb H\to\C$, such that
    \[
    \Lambda(a_m)=0\qquad \forall m=1,\dots,n;
    \]
    
    \item the non-constant coefficients of $f$ are contained in a proper real linear subspace of $\mb H$, that is 
    \[
    \dim_{\R}S_f\leq 3.
    \]
\end{enumerate}
\end{thm}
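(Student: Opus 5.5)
The plan is to make the hyperplane relation of Proposition~\ref{prop:hyperplane-relation} completely explicit for a polynomial and compare coefficients. We may assume $a_n\neq0$ with $n\ge1$, since $f$ is non-constant. Write $a_m=b_m+c_mj$ with $b_m,c_m\in\C$. For $z\in\Omega_i=\C$ we have $z^m a_m=z^mb_m+z^mc_mj$, so the splitting data are the complex polynomials
\[
g(z)=\sum_{m=0}^n z^m b_m,\qquad h(z)=\sum_{m=0}^n z^m c_m .
\]
By \eqref{eqDefHatA} this gives $\hat g(z)=\sum z^m\overline{b_m}$ and $\hat h(z)=\sum z^m\overline{c_m}$. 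Substituting these into \eqref{eq:hyperplane-relation}, the part of the relation that is linear in $g,h,\hat g,\hat h$ becomes
\[
z_2h-z_3g+z_4\hat g+z_5\hat h=\sum_{m=0}^n z^m\,\Lambda_z(a_m),
\]
with $\Lambda_z$ as in Lemma~\ref{lem:real-linear}.

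\textbf{(1)$\Rightarrow$(2).} Suppose $G_f$ is planar, with covector $(z_1,\dots,z_6)\neq0$. The first step, which I expect to be the only real obstacle, is to show that $z_1=0$. The quadratic term $g\hat g+h\hat h$ is a polynomial whose coefficient of $z^{2n}$ is $b_n\overline{b_n}+c_n\overline{c_n}=|a_n|^2\neq0$. Every other term of the relation has degree at most $n<2n$. The relation holds identically on the open set $V$, hence as a polynomial identity. Therefore the coefficient of $z^{2n}$ forces $z_1|a_n|^2=0$, so $z_1=0$. Comparing the remaining coefficients then gives
\[
\Lambda_z(a_m)=0\quad (1\le m\le n),\qquad \Lambda_z(a_0)+z_6=0 .
\]
It remains to see that $\Lambda_z\neq0$. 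If $\Lambda_z=0$, the uniqueness part of Lemma~\ref{lem:real-linear} gives $z_2=z_3=z_4=z_5=0$. The last equation then gives $z_6=0$, so the whole covector vanishes, a contradiction. Hence $\Lambda:=\Lambda_z$ is the required nonzero map.

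\textbf{(2)$\Rightarrow$(1).} Given a nonzero $\Lambda$, write $\Lambda=\Lambda_z$ using Lemma~\ref{lem:real-linear}; then $(z_2,\dots,z_5)\neq0$. Set $z_1:=0$ and $z_6:=-\Lambda(a_0)$. By the computation above the left-hand side of \eqref{eq:hyperplane-relation} vanishes identically, so $G_f$ is planar by Proposition~\ref{prop:hyperplane-relation}.

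\textbf{(2)$\Leftrightarrow$(3).} This is pure linear algebra. If $\Lambda\neq0$, then $\ker\Lambda$ is a real subspace of codimension $1$ or $2$ that contains $S_f$, so $\dim_\R S_f\le3$. Conversely, if $\dim_\R S_f\le3$, choose a nonzero real linear functional $\varphi:\mb H\to\R$ vanishing on $S_f$. Composing with $\R\subset\C$ gives a nonzero $\R$-linear $\Lambda:\mb H\to\C$ that kills every $a_m$ with $m\ge1$.
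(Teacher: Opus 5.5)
Your proposal is correct and follows the paper's proof step by step. You use the degree-$2n$ comparison to force $z_1=0$, coefficient matching to get $\Lambda_z(a_m)=0$ for $m\ge1$ and $\Lambda_z(a_0)+z_6=0$, the uniqueness in Lemma~\ref{lem:real-linear} to see $\Lambda_z\neq0$, the converse with $z_1=0$ and $z_6=-\Lambda(a_0)$, and elementary linear algebra for (2)$\Leftrightarrow$(3). The differences are cosmetic: you argue (2)$\Leftrightarrow$(3) via $\ker\Lambda$ and a real functional composed with $\R\subset\C$ instead of the annihilator dimension count, and you use the letter $z$ both for the complex variable and for the covector, which you should avoid in a write-up.
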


\begin{proof}
{We prove \((1)\Rightarrow(2)\Rightarrow(1)\): the equivalence   \((2)\Leftrightarrow(3)\) is an elementary fact of linear algebra.\par 

Proof of $(1)\Rightarrow(2)$. If $G_f$ is planar, then by Proposition \ref{prop:hyperplane-relation},
there exists a non-zero vector
$z=(z_1,\dots,z_6)\in\C^6$
such that
\begin{equation}\label{eq:hyperplane-relation1}
z_1P(v)+z_2h(v)-z_3g(v)+z_4\widehat g(v)+z_5\widehat h(v)+z_6=0
\qquad \text{for all }v\in V.
\end{equation}
 Decompose each coefficient \(a_m\) for all \(m=0,1,\dots,n,\) as
\begin{equation*}
a_m=b_m+c_mj,\qquad b_m,c_m\in\C.
\end{equation*}
Here\begin{equation}\label{eq:gv-hv}
g(v)=\sum_{m=0}^n b_m v^m,\qquad
h(v)=\sum_{m=0}^n c_m v^m,\qquad
\widehat g(v)=\sum_{m=0}^n \overline{b_m}\,v^m,\qquad
\widehat h(v)=\sum_{m=0}^n \overline{c_m}\,v^m.
\end{equation}

Since $\deg(f)=n$, at least one of $b_n$, $c_n$ is non-zero: hence
\[
P(v)=g(v)\widehat g(v)+h(v)\widehat h(v)
\]
has degree $2n$, with leading coefficient
\[
|b_n|^2+|c_n|^2=|a_n|^2\neq 0.
\]
On the other hand,   $\deg(g)$, $\deg(h)$, $\deg(\widehat g)$, and $\deg(\widehat h)$ are all $\leq n$: therefore the term $z_1P(v)$ is the only possible source of a $v^{2n}$-term in \eqref{eq:hyperplane-relation1}, and we must have
\[
z_1=0.
\]
Consequently, \eqref{eq:hyperplane-relation1} reduces to
\begin{equation}\label{eq:reduced-hyperplane-relation}
z_2h(v)-z_3g(v)+z_4\widehat g(v)+z_5\widehat h(v)+z_6=0
\qquad \text{for all }v\in V.
\end{equation}
We obtain
\begin{equation}\label{eq:reduced-hyperplane-relation1}
\sum_{m=0}^n
\bigl(z_2c_m-z_3b_m+z_4\overline{b_m}+z_5\overline{c_m}\bigr)v^m
+z_6=0.
\end{equation}

We define $\Lambda:=\Lambda_z$ as in Lemma~\ref{lem:real-linear}:
\[
\Lambda(b+cj):=z_2c-z_3b+z_4\overline b+z_5\overline c.
\]
Comparing coefficients of $v^m$ in \eqref{eq:reduced-hyperplane-relation1} gives
\[
\Lambda(a_m)=0\qquad \forall m=1,\dots,n,
\]
and also
\[
\Lambda(a_0)+z_6=0.
\]
It remains to check that $\Lambda\neq 0$. Towards a contradiction, if $\Lambda=0$, then, by the uniqueness statement in Lemma~\ref{lem:real-linear}, we would have
\[
z_2=z_3=z_4=z_5=0 
\]
and  \eqref{eq:reduced-hyperplane-relation} would reduce to $z_6=0$: together with $z_1=0$, this would yield
\[
z_1=z_2=z_3=z_4=z_5=z_6=0,
\]
that is, $z=0$, contrary to assumption.\par  

Proof of $(2)\Rightarrow(1)$. 
If  $\Lambda:\mb H\to\C$ is a non-zero $\R$-linear map with
\[
\Lambda(a_m)=0\qquad \forall m\geq 1\, .
\]
Write
\[
a_m=b_m+c_mj,\qquad b_m,c_m\in\mathbb C.
\]
then, by Lemma~\ref{lem:real-linear}, there exist uniquely determined complex numbers $z_2,z_3,z_4,z_5$ such that
\[
\Lambda(b_m+c_mj)=z_2c_m-z_3b_m+z_4\overline b_m+z_5\overline c_m.
\]
Since \(\Lambda\neq 0\), the tuple
\((z_2,z_3,z_4,z_5)\) is not zero.

If we set
\[
z_1:=0,\qquad z_6:=-\Lambda(a_0)
\]
and set
\[
z=(z_1,z_2,z_3,z_4,z_5,z_6)\in\mathbb C^6.
\]
Then \(z\neq 0\).

Recall that
\[
G_f(v)
=
\bigl[P(v):h(v):-g(v):\widehat g(v):\widehat h(v):1\bigr],
\]Then
\[
\begin{aligned}
\ell_z(G_f(v))
&:=
z_1P(v)+z_2h(v)-z_3g(v)
+z_4\widehat g(v)+z_5\widehat h(v)+z_6  \\
&=
z_2h(v)-z_3g(v)
+z_4\widehat g(v)+z_5\widehat h(v)-\Lambda(a_0).
\end{aligned}
\]
Based on \eqref{eq:gv-hv}, we have
\[
\ell_z(G_f(v))
=
\sum_{m=0}^n
\bigl(
z_2c_m-z_3b_m+z_4\overline{b_m}
+z_5\overline{c_m}
\bigr)v^m -\Lambda(a_0) 
=
\sum_{m=1}^n \Lambda(a_m)v^m.
\]
 Then 
\[
\ell_z(G_f(v))=0\qquad \forall v\in V,
\]
i.e.,  $G_f(\C)$ is contained in the hyperplane section $X_z$.
Therefore \(G_f\) is planar.\par  }

Proof of \((2)\Leftrightarrow(3)\). 
If
\[
S_f:=\spa_{\R}\{a_1,\dots,a_n\}\subset \mb H\simeq \R^4,
\]
then $S_f$ is proper if and only if its annihilator
\[
S_f^\circ:=\{\ell\in \operatorname{Hom}_{\R}(\mb H,\R)\mid \ell|_{S_f}=0\}
\]
is non-zero; equivalently, there exists a non-zero $\R$-linear functional vanishing on $S_f$. This follows from the dimension formula
\[
\dim_{\R} S_f^\circ = 4-\dim_{\R} S_f.
\]
Hence \((3)\) is equivalent to the existence of a non-zero $\R$-linear map
\[
\Lambda:\mb H\to\C
\]
such that
\[
\Lambda(a_m)=0\qquad \forall m\ge1.
\]
\end{proof}
We show now that the planarity of the twistor curve associated with $f$  is independent of the constant term $a_0$: in other words, translating $f$ by $a_0$ does not affect the planarity of $f$. 
\begin{cor}\label{prop:constant-term-planarity}
Let
\[
f(q)=a_0+\widetilde f(q),
\qquad
\widetilde f(q)=\sum_{m=1}^n q^m a_m .
\]
Then there exists a fixed matrix
\[
M_{a_0}:=
\begin{pmatrix}
I_2 & 0\\
\rho(a_0) & I_2
\end{pmatrix}
\in \GL(4,\mathbb C)
\]
such that
\[
G_f=M_{a_0}\cdot G_{\widetilde f}.
\]
In particular, \(G_f\) is planar if and only if \(G_{\widetilde f}\) is planar.
\end{cor}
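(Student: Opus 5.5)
The plan is to read the statement directly off Lemma~\ref{lem:fractional-linear-graphs}. First I write down the graph matrices. The splitting data of $f$ is $(g,h)=(b_0+\widetilde g,\;c_0+\widetilde h)$, where $a_0=b_0+c_0j$ and $(\widetilde g,\widetilde h)$ is the splitting data of $\widetilde f$. The reflections are $\hat g=\overline{b_0}+\widehat{\widetilde g}$ and $\hat h=\overline{c_0}+\widehat{\widetilde h}$, because reflecting a constant conjugates it. Comparing with \eqref{eq:Phi-f} and \eqref{eq:rho-quaternion}, this gives the identity
\[
\Phi_f(v)=\Phi_{\widetilde f}(v)+\rho(a_0),
\]
since the constant matrix $\begin{pmatrix} b_0 & -\overline{c_0}\\ c_0 & \overline{b_0}\end{pmatrix}$ is exactly $\rho(b_0+c_0j)$.

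Next I apply Lemma~\ref{lem:fractional-linear-graphs} with $T=M_{a_0}$, that is, with blocks $A=I_2$, $B=0$, $C=\rho(a_0)$ and $D=I_2$. Then $A+B\Phi=I_2$ is always invertible, so $M_{a_0}\cdot L(\Phi)$ stays in $\mathcal U_6$ and equals $L(\rho(a_0)+\Phi)$. Taking $\Phi=\Phi_{\widetilde f}(v)$ for each $v\in V$ gives
\[
M_{a_0}\cdot G_{\widetilde f}(v)=L\bigl(\Phi_{\widetilde f}(v)+\rho(a_0)\bigr)=L(\Phi_f(v))=G_f(v).
\]
The same computation shows that $M_{a_0}$ is invertible, with inverse $M_{-a_0}$, since $\rho$ is additive. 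Note also that $M_{a_0}\in\GL(2,\mb H)$ by \eqref{eq:GL2H-image}.

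For the planarity statement I use that $T=M_{a_0}$ induces the projective linear automorphism $\bigwedge^2T$ of $\mb P(\bigwedge^2\C^4)$, which preserves the Klein quadric. If $G_{\widetilde f}(V)\subset \Pi_z$, then $G_f(V)=\bigwedge^2T\bigl(G_{\widetilde f}(V)\bigr)$ lies in the image hyperplane $\bigwedge^2T(\Pi_z)$. This image is again a hyperplane, defined by the nonzero covector $z\circ(\bigwedge^2T)^{-1}$. The converse direction follows in the same way using $M_{-a_0}$.

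As a cross-check, this also follows from Theorem~\ref{thm:Hyperplane-sectionality-criterion}, since $S_f=S_{\widetilde f}$. I expect no step here to be a real obstacle. The only care needed is the bookkeeping: checking that the constant block produced by the reflections matches $\rho(a_0)$ with the correct signs and conjugations, and that the ordering in \eqref{eq:fractional-linear} makes $\rho(a_0)$ add rather than multiply.
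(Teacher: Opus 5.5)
Your proposal is correct and follows essentially the same route as the paper. Both establish $\Phi_f=\rho(a_0)+\Phi_{\widetilde f}$, then observe that $M_{a_0}$ sends the graph of $\Phi$ to the graph of $\rho(a_0)+\Phi$, and finally use that a projective linear automorphism carries hyperplane sections to hyperplane sections. The only differences are cosmetic: you derive the graph-matrix identity from the splitting data rather than the polynomial expansion, you invoke Lemma~\ref{lem:fractional-linear-graphs} where the paper computes $M_{a_0}(x,\Phi x)=(x,(\rho(a_0)+\Phi)x)$ directly with a dimension count, and you make the converse explicit via $M_{-a_0}=M_{a_0}^{-1}$.
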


\begin{proof}
By the polynomial expression for the graph matrix,
\[
\Phi_f(v)
=
\sum_{m=0}^n v^m\rho(a_m)
=
\rho(a_0)+\sum_{m=1}^n v^m\rho(a_m)
=
\rho(a_0)+\Phi_{\widetilde f}(v).
\]
Fix \(v\in V\). Since
\[
G_{\widetilde f}(v)=L(\Phi_{\widetilde f}(v)),
\]
based on \eqref{eqDefEllPhi}, every vector in \(G_{\widetilde f}(v)\) has the form
\[
\begin{pmatrix}
x\\
\Phi_{\widetilde f}(v)x
\end{pmatrix},
\qquad x\in\mathbb C^2.
\]
Applying \(M_{a_0}\) gives
\[
M_{a_0}
\begin{pmatrix}
x\\
\Phi_{\widetilde f}(v)x
\end{pmatrix}
=
\begin{pmatrix}
I_2 & 0\\
\rho(a_0) & I_2
\end{pmatrix}
\begin{pmatrix}
x\\
\Phi_{\widetilde f}(v)x
\end{pmatrix}
=
\begin{pmatrix}
x\\
\bigl(\rho(a_0)+\Phi_{\widetilde f}(v)\bigr)x
\end{pmatrix}
=
\begin{pmatrix}
x\\
\Phi_f(v)x
\end{pmatrix}.
\]
Hence
\[
M_{a_0}\cdot G_{\widetilde f}(v)\subseteq G_f(v).
\]
Both sides are two-dimensional subspaces of \(\mathbb C^4\), so the inclusion is an equality:
\[
M_{a_0}\cdot G_{\widetilde f}(v)=G_f(v).
\]
Since this holds for every \(v\in V\), we obtain the equality of curves
\[
G_f=M_{a_0}\cdot G_{\widetilde f}.
\]

Finally, since \(M_{a_0}\in \GL(4,\mathbb C)\), its induced projective action sends projective hyperplanes to projective hyperplanes. Therefore \(G_f\) is contained in a hyperplane section if and only if \(G_{\widetilde f}\) is contained in a hyperplane section. Thus planarity is independent of the constant term.
\end{proof}

\begin{cor}\label{cor:easy-consequences}
Let $f(q)=\sum_{m=0}^n q^m a_m$ be a slice-regular polynomial.
\begin{enumerate}
\renewcommand{\labelenumi}{\rm(\theenumi)}
    \item If $\deg(f)\leq 3$, then $f$ is planar.
    \item If
    \[
f(q)=a_0+qa_1+q^2a_2+q^3a_3+q^4a_4
    \]
    is a quartic polynomial, then $f$
    is planar  if and only if the four coefficients $a_1,a_2,a_3,a_4$ are linearly dependent over $\R$.
\end{enumerate}
\end{cor}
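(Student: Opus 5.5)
Both parts follow directly from Theorem~\ref{thm:Hyperplane-sectionality-criterion}, specifically the equivalence between planarity of $G_f$ (condition (1)) and the dimension bound $\dim_{\R}S_f\leq 3$ (condition (3)), with $S_f=\spa_{\R}\{a_1,\dots,a_n\}$. Here ``$f$ is planar'' means that $G_f$ is planar. The only subtlety is that the theorem assumes $f$ is non-constant, so constant polynomials need separate treatment.

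For part (1), I first dispose of the constant case. If $n=0$, then $\Phi_f$ is the constant matrix $\r(a_0)$, so $G_f$ is a constant curve whose image is a single point $[\zeta]\in\CP^5$. A single point always lies in some hyperplane $\Pi_z$ with $z\neq 0$, because a nonzero covector annihilating a nonzero vector of $\C^6$ always exists. The same argument also follows from Proposition~\ref{prop:hyperplane-relation}: take $z_1=z_2=z_3=z_4=z_5=0$ and $z_6=0$, which is not allowed, so instead choose $z$ with $z_2\neq 0$, set $z_3=z_4=z_5=0$, and adjust $z_6$ so that the constant relation holds. Hence $G_f$ is planar.

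If instead $1\leq\deg(f)\leq 3$, then $S_f$ is spanned by at most three vectors, so $\dim_{\R}S_f\leq 3$. Condition (3) of Theorem~\ref{thm:Hyperplane-sectionality-criterion} holds, and the theorem gives planarity.

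For part (2), the quartic hypothesis gives $a_4\neq 0$, so $f$ is non-constant and the theorem applies. Here $S_f=\spa_{\R}\{a_1,a_2,a_3,a_4\}\subset\mb H\cong\R^4$. Four vectors in $\R^4$ span a subspace of dimension at most $3$ exactly when they are linearly dependent over $\R$. So condition (3) is equivalent to the $\R$-linear dependence of $a_1,a_2,a_3,a_4$, and the equivalence (1)$\Leftrightarrow$(3) finishes the proof.

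The main obstacle is just the bookkeeping in the degenerate constant case. The rest is an immediate specialization of the criterion, possibly combined with Corollary~\ref{prop:constant-term-planarity} if one wants to note that $a_0$ plays no role.
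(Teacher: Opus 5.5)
Your proposal is correct and follows the paper's own proof. Like the paper, you read both parts off the equivalence (1)$\Leftrightarrow$(3) of Theorem~\ref{thm:Hyperplane-sectionality-criterion}: at most three vectors cannot span $\mb H$, and four vectors in $\mb H\cong\R^4$ fail to span exactly when they are $\R$-linearly dependent.

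Your separate treatment of the constant case is a small bit of extra care that the paper skips, since the theorem formally assumes $f$ non-constant. The choice you land on, made explicit as $z=(0,1,0,0,0,-c_0)$ with $a_0=b_0+c_0j$, does work. The sentence leading up to it is muddled, though, and should be cleaned up.
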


\begin{proof}
Part \((1)\) follows because at most three vectors cannot span the real four-dimensional space $\mb H$. Part \((2)\) is the specialization of the same criterion to $N=4$.
\end{proof}

\subsection{Hyperplanes containing a polynomial twistor curve}
In this subsection, we further classify the planar polynomials in terms of Definition \ref{def:A-planar-unified}.

Let
\[
f(q)=\sum_{m=0}^n q^m a_m
\]
be a non-constant slice-regular polynomial, and set
\[
S_f:=\spa_{\R}\{a_1,\dots,a_n\}\subset \mb H.
\]

Throughout this paper, we always assume that $f$ is planar.
By Theorem \ref{thm:Hyperplane-sectionality-criterion}, we have \[
r:=\dim_{\R}S_f\le3.
\]

For $\Lambda\in \Hom_{\R}(\mb H,\C)$, we denote
\[\rank_\R\Lambda:=\dim_\R\Lambda(\mb H).\]

Define
\[
\mathcal A_f
:=
\{\Lambda\in\Hom_{\R}(\mb H,\C):\Lambda|_{S_f}=0\}.
\]

Take \(0\neq\Lambda\in\mathcal A_f\) and write
\[
\Lambda(b+cj)
=
z_2c-z_3b+z_4\overline b+z_5\overline c.
\]
Set the covector
\begin{equation}\label{eq:z-Lambda-f}
z_{\Lambda,f}:=(0,z_2,z_3,z_4,z_5,-\Lambda(a_0))\in \mb V^\vee.
\end{equation}
In this case, the pole of \(\Pi_{z_{\Lambda,f}}\) is
\begin{equation}\label{eq:p-Lambda-f}
p_{\Lambda,f}:=\kappa^{-1}([z_{\Lambda,f}])
=
[-\Lambda(a_0):-z_5:z_4:z_3:-z_2:0].
\end{equation}

Define
\[
\Delta(\Lambda):=z_3z_4-z_2z_5,
\qquad
\nu(\Lambda):=|z_2|^2+|z_3|^2+|z_4|^2+|z_5|^2.
\]
A direct calculation gives
\begin{equation}\label{eq:1}
    q(p_{\Lambda,f})=\Delta(\Lambda),
\end{equation}
and 
\begin{equation}\label{eq:2}
 B(p_{\Lambda,f},\sigma p_{\Lambda,f})=-\nu(\Lambda).
\end{equation}

We now state the classification of planar polynomial twistor curves.  We shall
see that, among all dual hyperplane orbits listed in
Definition~\ref{def:pole-types}, only three types can occur for hyperplane
sections containing a non-constant polynomial twistor curve.

Let
\[
\mathcal O_-:=G\cdot[1:0:0:0:0:-1]\subset\CP^5
\]
be the real space-like pole orbit.  Let
\[
\mathcal O_{Q\smallsetminus N}:=G\cdot[-1:0:-i:i:0:1]\subset\CP^5
\]
be the non-real isotropic pole orbit.  Finally, let
\[
\mathcal O_{\mathrm{nd}}
:=
\bigcup_{0<\lambda<1}
G\cdot[-1:0:-i\lambda:i\lambda:0:1]
\subset\CP^5
\]
be the non-real negative-definite pole family.

For convenience, we denote the corresponding dual hyperplane orbits by
\[
A_-:=\kappa(\mathcal O_-),
\qquad
A_{Q\smallsetminus N}:=\kappa(\mathcal O_{Q\smallsetminus N}),
\qquad
A_{\mathrm{nd}}:=\kappa(\mathcal O_{\mathrm{nd}})
\subset(\CP^5)^\vee.
\]
Thus \(A_-\), \(A_{Q\smallsetminus N}\), and \(A_{\mathrm{nd}}\) are respectively
the dual real space-like orbit, the dual \(Q\smallsetminus N\)-tangent orbit, and
the dual non-real negative-definite family.

\begin{thm}\label{thm:second class.}
 The hyperplane sections containing \(G_f\) are parametrized by
\[
\mb P(\mathcal A_f)\cong\CP^{3-r}.
\]
More precisely, for any \(0\neq\Lambda\in\mathcal A_f\), one has
\[
G_f\subset X_{z_{\Lambda,f}},
\]
and every hyperplane section containing \(G_f\) is obtained in this way, up to
multiplying \(\Lambda\) by a non-zero complex scalar.

Moreover, for a non-zero \(\Lambda\in\mathcal A_f\), the corresponding
hyperplane section \(X_{z_{\Lambda,f}}\) has the following type:
\begin{enumerate}[label=\textup{(\roman*)}]
\item If
\[
\rank_{\R}\Lambda=1,
\]
then \(f\) is \(A_-\)-planar.

\item If
\[
\rank_{\R}\Lambda=2
\qquad\text{and}\qquad
\Delta(\Lambda)=0,
\]
then \(f\) is \(A_{Q\smallsetminus N}\)-planar.

\item If
\[
\rank_{\R}\Lambda=2
\qquad\text{and}\qquad
\Delta(\Lambda)\neq0,
\]
then \(f\) is \(A_{\mathrm{nd}}\)-planar.
\end{enumerate}
\end{thm}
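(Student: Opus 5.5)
The plan has two parts: first identify all covectors $z$ with $G_f\subset X_z$, then read off the type of the pole $p_{\Lambda,f}$ through the real Gram-matrix description used in the proof of Theorem~\ref{thm:pole-orbit-types}.

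\emph{Parametrization.} Suppose $G_f\subset X_z$ with $z\neq 0$. The argument in the proof $(1)\Rightarrow(2)$ of Theorem~\ref{thm:Hyperplane-sectionality-criterion} applies verbatim. Comparing the $v^{2n}$ coefficients forces $z_1=0$. The remaining coefficient identities then say exactly that $\Lambda:=\Lambda_z$ lies in $\mathcal A_f\smallsetminus\{0\}$ and that $z_6=-\Lambda(a_0)$, so $z=z_{\Lambda,f}$. Conversely, the proof $(2)\Rightarrow(1)$ gives $G_f\subset X_{z_{\Lambda,f}}$ for every nonzero $\Lambda\in\mathcal A_f$.

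By the uniqueness part of Lemma~\ref{lem:real-linear}, the map $\Lambda\mapsto z_{\Lambda,f}$ is injective and $\C$-linear, so it descends to a bijection from $\mb P(\mathcal A_f)$ onto the set of hyperplanes containing $G_f$. Moreover $\mathcal A_f\cong\Hom_\R(\mb H/S_f,\C)$, which has complex dimension $4-r$, so $\mb P(\mathcal A_f)\cong\CP^{3-r}$.

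\emph{Type.} Since $\Lambda$ is $\C$-valued, $\rank_\R\Lambda\in\{1,2\}$. Write $p=p_{\Lambda,f}=x+iy$ with $x,y\in V_\R$. A real vector has the form $(r,c,-b,\overline b,\overline c,s)$. Because $p_6=0$, both $x$ and $y$ have $s=0$, so on $\spa_\R\{x,y\}$ the form reduces to $q=-|b|^2-|c|^2$.

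Hence $q$ restricted to $E_p=\spa_\R\{x,y\}$ is negative semidefinite. It is negative definite exactly when the "middle parts" $(b,c)$ of $x$ and $y$ are $\R$-independent. Writing $\Lambda=\ell_1+i\ell_2$ with $\ell_1,\ell_2$ real functionals, I will check by a direct computation with the formula $p_{\Lambda,f}=[-\Lambda(a_0):-z_5:z_4:z_3:-z_2:0]$ that the middle parts of $x$ and $y$ correspond, via the coefficient isomorphism of Lemma~\ref{lem:real-linear}, to $\ell_1$ and $\ell_2$. The key equivalence is then
\[
\rank_\R\Lambda=2 \iff q|_{E_p} \text{ is negative definite of rank } 2 .
\]
This identification is the step I expect to need the most care; it requires tracking signs and conjugations in $\sigma$.

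\emph{Cases.}
\begin{enumerate}[label=\textup{(\roman*)}]
\item If $\rank_\R\Lambda=1$, then after multiplying $\Lambda$ by a complex scalar it is real-valued. This means $z_5=\overline{z_2}$ and $z_4=-\overline{z_3}$, and also $\Lambda(a_0)\in\R$. One then checks $\sigma p=p$, so $[p]$ is projectively real. Moreover $q(p)=\Delta(\Lambda)=-|z_2|^2-|z_3|^2<0$, because $\Lambda\neq0$. By Theorem~\ref{thm:pole-orbit-types}, $[p]$ is real space-like, so the hyperplane lies in $A_-$.
\item If $\rank_\R\Lambda=2$ and $\Delta(\Lambda)=0$, then $E_p$ is a genuine plane, so $[\sigma p]\neq[p]$, and $q(p)=0$ by \eqref{eq:1}. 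In the proof of Theorem~\ref{thm:pole-orbit-types}, the isotropic case gives $M_p$ scalar. Here that scalar must be negative because $q|_{E_p}$ is negative definite, which is case (A2). So the pole lies in $\mathcal O_{Q\smallsetminus N}$.
\item If $\rank_\R\Lambda=2$ and $\Delta(\Lambda)\neq0$, then $E_p$ is a negative definite plane with $q(p)\neq0$, so $\det M_p>0$ and $\operatorname{tr}M_p<0$. This is the (B5) case, with $\tau<-1$, consistent with $\tau=-\nu/(2|\Delta|)$ from \eqref{eq:1} and \eqref{eq:2}. So the pole lies in $\mathcal O_{\mathrm{nd}}$.
\end{enumerate}
Applying $\kappa$ and Proposition~\ref{prop:kappa-unified} transfers these pole types to the hyperplane types $A_-$, $A_{Q\smallsetminus N}$ and $A_{\mathrm{nd}}$, which proves the three statements.
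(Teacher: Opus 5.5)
Your argument is correct. The parametrization part is essentially the paper's: the paper cites the proof of Theorem~\ref{thm:Hyperplane-sectionality-criterion} and gets $\dim_{\mathbb C}\mathcal A_f=4-r$ from a surjective evaluation map. Your type analysis, however, takes a different route.

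The paper never splits $p_{\Lambda,f}$ into real and imaginary parts. It proves the identity \eqref{eq:rank-identity}, $\det(L_\Lambda L_\Lambda^T)=\nu(\Lambda)^2-4|\Delta(\Lambda)|^2$, and combines it with $q(p_{\Lambda,f})=\Delta(\Lambda)$ and $B(p_{\Lambda,f},\sigma p_{\Lambda,f})=-\nu(\Lambda)$. This characterizes rank one by $\nu=2|\Delta|$ and gives $\tau=-\nu/(2|\Delta|)$ directly. In case (ii) it excludes projective reality by the sign of $B(p,\sigma p)$.

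You instead notice that $p_6=0$, i.e.\ the pole is $B$-orthogonal to the real null point $[1:0:0:0:0:0]$. Hence $E_p$ lies in the negative semidefinite hyperplane $\{s=0\}$ of $V_{\R}$, and you match the middle parts of $x,y$ with the real and imaginary parts $\ell_1,\ell_2$ of $\Lambda$. The step you flagged does hold, and cleanly. The conjugate functional $\overline\Lambda$ has coordinates $(\overline{z_5},-\overline{z_4},-\overline{z_3},\overline{z_2})$, so $p_{\overline\Lambda,f}=\sigma p_{\Lambda,f}$. By $\mathbb C$-linearity of $\Lambda\mapsto p_{\Lambda,f}$, one gets exactly $x=p_{\ell_1,f}$ and $y=p_{\ell_2,f}$. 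Carried one step further, this gives $M_p=-\tfrac12 L_\Lambda L_\Lambda^T$. The paper's identity is just its determinant, since $4\det M_p=(\operatorname{tr}M_p)^2-|B(p,p)|^2$.

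Your route is more conceptual: it explains why time-like, Lorentzian and degenerate poles can never arise, and it makes the determinant identity unnecessary. The paper's route is a shorter coordinate computation that lands directly on the invariant $\tau$.
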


\begin{proof}
    The first statement is from the proof of Theorem \ref{thm:Hyperplane-sectionality-criterion}. Furthermore, let \(s_1,\dots,s_r\) be a real basis of \(S_f\).  The evaluation map
\[
\Hom_{\R}(\mb H,\C)\to \C^r,
\qquad
\Lambda\mapsto(\Lambda(s_1),\dots,\Lambda(s_r)),
\]
is surjective: extend \(s_1,\dots,s_r\) to a real basis of \(\mb H\) and
prescribe arbitrary complex values on the first \(r\) basis vectors.
Since \(\Hom_{\R}(\mb H,\C)\) is a complex vector space of dimension \(4\),
we get
\[
\dim_{\C}\mathcal A_f=4-r.
\]
Thus
\[
\mb P(\mathcal A_f)\cong\CP^{3-r}.
\]

 It remains to relate the real rank of \(\Lambda\) to these pole conditions.
Let \(L_\Lambda\) be the real \(2\times4\) matrix of the real-linear map
\[
\Lambda:\mb H\simeq\R^4\to\C\simeq\R^2.
\]
From
\[
\Lambda(x+yi+uj+vk)
=
(-z_3+z_4)x-i(z_3+z_4)y+(z_2+z_5)u+i(z_2-z_5)v,
\]
one obtains the identity
\begin{equation}\label{eq:rank-identity}
\det(L_\Lambda L_\Lambda^T)
=
\nu(\Lambda)^2-4|\Delta(\Lambda)|^2.
\end{equation}
Since \(L_\Lambda L_\Lambda^T\) is positive semidefinite and
\(\Lambda\neq0\), the real rank of \(\Lambda\) is \(1\) if and only if
\begin{equation}\label{eq:2026572008}
\nu(\Lambda)=2|\Delta(\Lambda)|>0.    
\end{equation}

Suppose first that \(\rank_{\R}\Lambda=1\).  The image of \(\Lambda\) is a
real line in \(\C\).  Hence there exist \(\mu\in\C^{\times }\) and a real-valued
linear functional \(\lambda:\mb H\to\R\) such that
\[
\Lambda=\mu\lambda.
\]
The covector \(z_{\Lambda,f}\), and hence also the pole \(p_{\Lambda,f}\),
is a complex multiple of the covector and pole obtained from \(\lambda\). As \(\lambda\) is real-valued, the pole \(p_{\lambda,f}\) is fixed by
the real structure \(\sigma\), that is,
\[
p_{\lambda,f}\in V_{\mathbb R}.
\]
Therefore
\[
[p_{\Lambda,f}]=[p_{\lambda,f}]
\]
is projectively real.
  Moreover, by \eqref{eq:2} and \eqref{eq:2026572008}, one has
\[
B(p_{\Lambda,f},\sigma p_{\Lambda,f})=-\nu(\Lambda)<0.
\]
On the other hand, since \(p_{\Lambda,f}=\mu p_{\lambda,f}\), then
\[
B(p_{\Lambda,f},\sigma p_{\Lambda,f})
=
|\mu|^2B(p_{\lambda,f},p_{\lambda,f})
=
2|\mu|^2q(p_{\lambda,f}).
\]
Thus \(q(p_{\lambda,f})<0\).  This is real negative type, and so $f$ is $A_-$-planar.

Now suppose that \(\rank_{\R}\Lambda=2\) and \(\Delta(\Lambda)=0\).  Then by \eqref{eq:1},
\[
q(p_{\Lambda,f})=0.
\]
Also \eqref{eq:2} gives
\[
B(p_{\Lambda,f},\sigma p_{\Lambda,f})=-\nu(\Lambda)<0.
\]
If \([p_{\Lambda,f}]\) were projectively real, then we could write
\([p_{\Lambda,f}]=[u]\) with \(u\in V_{\R}\).  Since \(q(u)=0\), we would
have
\[
B(u,u)=2q(u)=0.
\]
This would force
\[
0>-\nu(\Lambda)=B(p_{\Lambda,f},\sigma p_{\Lambda,f})=0.
\]
a contradiction.  Hence the pole is isotropic and not
projectively real.  This is the \(Q\smallsetminus N\)-tangent type, and so $f$ is $A_{Q\smallsetminus N}$-planar.

Finally suppose that \(\rank_{\R}\Lambda=2\) and \(\Delta(\Lambda)\neq0\).
Then \(q(p_{\Lambda,f})\neq0\).  Since the rank is \(2\), the determinant in
\eqref{eq:rank-identity} is positive, and therefore
\[
\nu(\Lambda)>2|\Delta(\Lambda)|.
\]
Hence \eqref{eq:1} and \eqref{eq:2} give
\[
\tau([p_{\Lambda,f}])
=
\frac{B(p_{\Lambda,f},\sigma p_{\Lambda,f})}
{2|q(p_{\Lambda,f})|}
=
-\frac{\nu(\Lambda)}{2|\Delta(\Lambda)|}
<-1.
\]
 Therefore the type is non-real negative-definite, and so $f$ is $A_{\mr{nd}}$-planar.
\end{proof}
\subsection{Uniqueness and non-uniqueness of planar type}
The refined classification should be understood as a classification of the
hyperplane sections containing \(G_f\), or equivalently of the pairs
\[
(G_f,X_{z_{\Lambda,f}}),
\qquad
0\neq\Lambda\in\mathcal A_f.
\]
It does not assign a unique orbit type to the polynomial \(f\) itself. More generally, we have the following.

\begin{thm}\label{thm:unique-planar-type}
 The following hold.

\begin{enumerate}[label=\textup{(\roman*)}]
\item If
$
r=3,
$
then \(G_f\) is contained in a unique hyperplane section.  This unique
hyperplane section is of type \(A_-\).  Hence \(f\) has a unique planar type.

\item If
$
r\le2,
$
then \(G_f\) is contained in more than one hyperplane section.  In fact, the
same polynomial \(f\) is simultaneously
\[
A_-\text{-planar},\qquad
A_{Q\smallsetminus N}\text{-planar},\qquad
A_{\mathrm{nd}}\text{-planar}.
\]
In particular, \(f\) has no unique planar type.
\end{enumerate}
\end{thm}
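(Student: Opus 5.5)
The plan is to work entirely inside the parametrization of Theorem~\ref{thm:second class.}. The hyperplane sections containing $G_f$ correspond bijectively to $\mb P(\mathcal A_f)\cong\CP^{3-r}$, and the type of each one is decided by $\rank_\R\Lambda$ and by whether $\Delta(\Lambda)$ vanishes. So both parts reduce to describing which $\Lambda\in\mathcal A_f\smallsetminus\{0\}$ occur. First I would check that the map $\Lambda\mapsto[z_{\Lambda,f}]$ is injective on $\mb P(\mathcal A_f)$. This is clear from the uniqueness in Lemma~\ref{lem:real-linear}: non-proportional $\Lambda$ give non-proportional middle coordinates $(z_2,z_3,z_4,z_5)$, hence distinct hyperplanes.

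For part (i), with $r=3$, the space $\mathcal A_f$ has complex dimension $1$, so there is exactly one hyperplane section containing $G_f$. To find its type, I would pick a nonzero real functional $\lambda:\mb H\to\R$ vanishing on $S_f$. It exists and is unique up to a real scalar, since the annihilator of $S_f$ in $\Hom_\R(\mb H,\R)$ is one-dimensional. Then $\mathcal A_f=\C\lambda$, so every nonzero $\Lambda\in\mathcal A_f$ has $\rank_\R\Lambda=1$. Case (i) of Theorem~\ref{thm:second class.} then gives type $A_-$, and uniqueness of the planar type follows because the orbits of Theorem~\ref{thm:pole-orbit-types} are distinct.

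For part (ii), with $r\le2$, the space $\mathcal A_f$ has complex dimension at least $2$, so there are at least two hyperplane sections. To exhibit all three types, I would enlarge $S_f$ to a real $2$-plane $W\supseteq S_f$; if $r<2$ any choice works. Then $\Lambda$'s vanishing on $W$ already suffice, and I would use the real identification $\mb H/W\cong\R^2$. Real-linear maps $\R^2\to\C$ are complex $2$-vectors $(\alpha,\beta)$, meaning $\Lambda=\alpha\ell_1+\beta\ell_2$ where $\ell_1,\ell_2$ is a real basis of the annihilator of $W$.
\begin{itemize}
\item Rank $1$ occurs for $\Lambda=\ell_1$, which gives type $A_-$.
\item Rank $2$ with $\Delta=0$ should occur for $\Lambda=\ell_1+i\ell_2$.
\item Rank $2$ with $\Delta\neq0$ should occur for $\Lambda=\ell_1+2i\ell_2$.
\end{itemize}

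The main obstacle is computing $\Delta$ intrinsically rather than in the coordinates $z_2,\dots,z_5$. I would use \eqref{eq:rank-identity}, $\det(L_\Lambda L_\Lambda^T)=\nu^2-4|\Delta|^2$, together with a direct interpretation of $\Delta$. Writing $\Lambda(q)=\alpha\ell_1(q)+\beta\ell_2(q)$, the quantity $\Delta(\Lambda)$ is a complex quadratic form in $(\alpha,\beta)$, namely the complexification of $q$ restricted to the corresponding real data. I expect it to be proportional to $\alpha^2 g_{11}+2\alpha\beta g_{12}+\beta^2 g_{22}$, where $(g_{kl})$ is the Euclidean Gram matrix of the dual functionals $\ell_k$, since $q$ is negative definite on the pole part of $V_\R$. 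This expectation comes from the fact that $p_{\Lambda,f}$ has middle part equal to $\alpha u_1+\beta u_2$ with $u_k$ real, and $q$ on that part equals $-|b|^2-|c|^2$.

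Choosing $\ell_1,\ell_2$ orthonormal, $\Delta$ becomes proportional to $\alpha^2+\beta^2$. This vanishes for $(1,i)$, while $\Lambda$ still has real rank $2$ because $\ell_1,\ell_2$ are independent, and it is nonzero for $(1,2i)$. Cases (ii) and (iii) of Theorem~\ref{thm:second class.} then yield types $A_{Q\smallsetminus N}$ and $A_{\mathrm{nd}}$. If the proportionality needs a sign or conjugation fix, I would instead verify directly with $\ell_1=\mathrm{Re}$-type coordinate functionals after a rotation of $\mb H$ sending $W$ to $\spa_\R\{j,k\}$. That rotation is an isometry, and it suffices because $\Delta$ and $\nu$ are invariant under unit left and right multiplications.
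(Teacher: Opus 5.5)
Your proposal is correct. Part (i) is the paper's argument, but part (ii) takes a different, more constructive route.

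\textbf{The paper's route for (ii).} The paper never computes $\Delta$. It observes that a rank-one $\Lambda\in\mathcal A_f$ satisfies $\nu(\Lambda)=2|\Delta(\Lambda)|>0$, so $\Delta|_{\mathcal A_f}$ is a nonzero quadratic form on a complex space of dimension $\ge 2$. Algebraic closedness then gives a nontrivial zero $\Lambda_0$. This $\Lambda_0$ cannot have rank one, so it yields type $A_{Q\smallsetminus N}$. Finally, the rank-two locus is a nonempty real-open subset of $\mathcal A_f$, and a nonzero polynomial cannot vanish identically on it, so some rank-two $\Lambda_1$ has $\Delta(\Lambda_1)\neq0$.

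\textbf{Your route for (ii).} You restrict to the two-dimensional space of $\Lambda$ vanishing on a real $2$-plane $W\supseteq S_f$ and identify $\Delta$ there. Your expectation is exactly right and needs no hedge. Write $\Lambda(x+yi+uj+vk)=Ax+By+Cu+Dv$ and use the formulas of Lemma~\ref{lem:real-linear}. Then $\Delta(\Lambda)=-\tfrac14(A^2+B^2+C^2+D^2)$ and $\nu(\Lambda)=\tfrac12(|A|^2+|B|^2+|C|^2+|D|^2)$. So $\Delta$ is $-\tfrac14$ times the complex-bilinear extension of the Euclidean form, and the rotation fallback is unnecessary. Your structural reason already suffices on its own: $q$ is negative definite on real poles with $p_6=0$, so you can orthonormalize $\ell_1,\ell_2$ for $-q$. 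With $\ell_1,\ell_2$ Euclidean-orthonormal, both $\ell_1+i\ell_2$ and $\ell_1+2i\ell_2$ have real rank two, with $\Delta=0$ and $\Delta=\tfrac34$ respectively.

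\textbf{Comparison.} Your route gives explicit witnesses. It also gives a transparent picture: the $A_{Q\smallsetminus N}$ hyperplanes are the isotropic cone of the complexified Euclidean form on $\mathcal A_f$, and \eqref{eq:rank-identity} becomes a short check. The paper's route avoids any coordinate computation of $\Delta$. It uses only \eqref{eq:rank-identity}, algebraic closedness, and the identity principle for polynomials.
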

\begin{proof}
If \(r=3\), then by Theorem \ref{thm:second class.},
\[
\mathbb P(\mathcal A_f)\cong\CP^0,
\]
so there is exactly one hyperplane section containing \(G_f\).  Moreover,
every non-zero \(\Lambda\in\mathcal A_f\) vanishes on the real \(3\)-plane
\(S_f\).  Since \(\Lambda\neq0\), its real rank is \(1\).  By the refined
classification, the unique hyperplane section is of type \(A_-\).

\medskip

Now assume \(r\le2\).  Then $
\mathbb P(\mathcal A_f)$
has positive dimension.  Hence \(G_f\) is contained in more than one
hyperplane section.

We now show that the three possible types all occur.  Since
\[
\dim_{\mathbb R}(\mathbb H/S_f)=4-r\ge2,
\]
there exists a non-zero real-linear functional
\[
\ell:\mathbb H/S_f\to\mathbb R.
\]
Let
\[
\pi:\mathbb H\to \mathbb H/S_f
\]
be the quotient map and set
\[
\lambda:=\ell\circ\pi:\mathbb H\to\mathbb R.
\]
Then \(\lambda\) is non-zero and
\[
\lambda|_{S_f}=0.
\]
Viewing \(\mathbb R\) as the real axis in \(\mathbb C\), we may regard
\(\lambda\) as a map from $\mathbb H$ to $\mathbb C.$
Then
\[
\Lambda\in\mathcal A_f.
\]
Moreover, since \(\Lambda\neq0\) and
\[
\Lambda(\mathbb H)\subset \mathbb R\subset\mathbb C,
\]
the image of \(\Lambda\) is a one-dimensional real subspace of \(\mathbb C\).
Thus
\[
\rank_{\mathbb R}\Lambda=1,
\]
and thus \(f\) is \(A_-\)-planar.

Next consider the quadratic expression
\[
\Delta(\Lambda)=z_3z_4-z_2z_5
\]
on \(\mathcal A_f\).  We claim that \(\Delta\) is not identically zero on \(\mathcal A_f\).
Indeed, we have proven that there exists
$
\Lambda\in\mathcal A_f
$
with
$
\rank_{\mathbb R}\Lambda=1.
$
For this \(\Lambda\), one has
\[
\det(L_\Lambda L_\Lambda^T)=0.
\]
By \eqref{eq:rank-identity},
\[
\nu(\Lambda)=2|\Delta(\Lambda)|.
\]
Since \(\Lambda\neq0\), we have
\[
\nu(\Lambda)>0,
\]
and so
\[
\Delta(\Lambda)\neq0.
\]

As
$
\dim_{\mathbb C}\mathcal A_f\ge2,
$
and \(\mathbb C\) is algebraically closed, the non-zero homogeneous
quadratic polynomial
$
\Delta|_{\mathcal A_f}
$
has a non-trivial zero.  Choose
$
0\neq\Lambda_0\in\mathcal A_f
$
with
$
\Delta(\Lambda_0)=0.
$
Such a \(\Lambda_0\) cannot have real rank \(1\), because rank \(1\) implies
\(\Delta\neq0\).  Hence
\[
\rank_{\mathbb R}\Lambda_0=2.
\]
Therefore \(f\) is \(A_{Q\smallsetminus N}\)-planar.

Finally, rank \(2\) elements with \(\Delta\neq0\) also exist.  Indeed, note that \[
\rank_{\mathbb R}\Lambda=2
\quad\Longleftrightarrow\quad
\det(L_\Lambda L_\Lambda^T)>0.
\]
Since \(\Lambda\mapsto \det(L_\Lambda L_\Lambda^T)\) is continuous, the subset of $\mc A_f$ consisting of elements with
rank-two is nonempty and open in the real topology of \(\mathcal A_f\). Then the polynomial \(\Delta|_{\mathcal A_f}\) vanishes on a non-empty open
subset of the real vector space underlying \(\mathcal A_f\).  

If the polynomial \(\Delta|_{\mathcal A_f}\) vanishes on a non-empty open
subset of the real vector space underlying \(\mathcal A_f\), then it is identically zero. However, we have proven that $\Delta$ is not identically zero. Therefore,
 we can choose
$
\Lambda_1\in\mathcal A_f
$
such that
\[
\rank_{\mathbb R}\Lambda_1=2,
\qquad
\Delta(\Lambda_1)\neq0.
\]
By the refined classification, \(f\) is \(A_{\mathrm{nd}}\)-planar.
\end{proof}

\section{Projective transformations and orbit equivalence}\label{secApplication2}
The preceding sections describe planar twistor curves through their containing hyperplanes. We now study how quaternionic projective transformations act on the underlying slice-regular functions and how this action leads to orbit equivalence for polynomials.

\subsection{The partial \(\GL(2,\mathbb H)\)-action}
The full \(\GL(4,\mathbb C)\)-action on \(\operatorname{Gr}_2(\mathbb C^4)\) is too large to preserve the real structure underlying slice-regular functions. The relevant transformations are those coming from \(\GL(2,\mathbb H)\), but their action on functions is only partially defined because the affine chart \(U_6\) need not be preserved.

 Under the identification \(\mb H^2\cong \C^4\), the group of automorphisms of $\mathbb{H}^2$ can be identified with the subgroup
\begin{equation}\label{eq:GL(2,H)}
    \GL(2,\mb H)=\{T\in \GL(4,\C)\mid T\circ \mathbf{j}=\mathbf{j}\circ T\}
\end{equation}
of the group of automorphisms of $\mathbb{C}^4$ that commute with   antiholomorphic map  $\mathbf{j}$, defined as in \eqref{eq:j-on-C4}.
Indeed, if we write
\[
T=\begin{pmatrix}A&B\\ C&D\end{pmatrix},
\qquad A,B,C,D\in \Mat_{2\times 2}(\C),
\]
then the relation \(T\circ \mathbf{j}=\mathbf{j}\circ T\) is equivalent to
\[
A,B,C,D\in\Fix(\p)=\rho(\mb H)\, ,
\]
and then \eqref{eq:GL(2,H)} follows from~\eqref{eq:GL2H-image}.

\begin{lem}
\label{lem:commutes-sigma}
For every $T\in \GL(2,\mb H)$, the induced projective action of $T$ on
$\mb P(\bigwedge^{2}\C^{4})$ preserves $\Gr_{2}(\C^{4})$ and commutes with the real
structure $\sigma$.
\end{lem}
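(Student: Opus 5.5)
The plan is to argue at the level of $\C^4$ and $\bigwedge^2\C^4$, using that $T\in\GL(2,\mb H)$ is characterized by $T\circ\mathbf j=\mathbf j\circ T$, as in \eqref{eq:GL(2,H)}.

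First, preservation of $\Gr_2(\C^4)$ is formal. The projective action of $T$ on $\mb P(\bigwedge^2\C^4)$ is induced by $\bigwedge^2T$. Since $(\bigwedge^2 T)(v_1\wedge v_2)=Tv_1\wedge Tv_2$, we get
\[
\bigl[\textstyle\bigwedge^2T\bigr]\cdot\Pl(L)=\Pl(T\cdot L)
\]
for every $L=\spa\{v_1,v_2\}$. The decomposable locus, which is the Klein quadric \eqref{eq:Klein-eq-unified}, is therefore mapped onto itself. This actually holds for all of $\GL(4,\C)$.

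Second, I will show that $\sigma$ is the map induced by $\mathbf j$ on Pl\"ucker coordinates, i.e.
\[
\sigma(\Pl(L))=\Pl(\mathbf j(L)),
\]
where $\mathbf j(L)$ is again a complex $2$-plane because $\mathbf j$ is antilinear. Concretely, I define the antilinear map $\tilde{\mathbf j}:\bigwedge^2\C^4\to\bigwedge^2\C^4$ by $\tilde{\mathbf j}(v_1\wedge v_2)=\mathbf j v_1\wedge\mathbf j v_2$. This is well defined because it is the antilinear extension of a map that is multilinear and alternating over $\R$ and conjugate-linear in each slot. Then I check on the basis $E_1,\dots,E_6$ that it reproduces \eqref{eq:sigma}. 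We have $\mathbf j e_1=e_2$, $\mathbf j e_2=-e_1$, $\mathbf j e_3=e_4$, $\mathbf j e_4=-e_3$, which gives:
\begin{itemize}
\item[] $\tilde{\mathbf j}E_1=E_1$ and $\tilde{\mathbf j}E_6=E_6$;
\item[] $\tilde{\mathbf j}E_2=-\mathbf je_2\wedge\mathbf je_4=-e_1\wedge e_3=E_5$;
\item[] $\tilde{\mathbf j}E_3=e_1\wedge e_4=E_4$, so the $\zeta_3$ and $\zeta_4$ slots are exchanged, with signs to be fixed by the computation.
\end{itemize}
Comparing with \eqref{eq:sigma} coefficient by coefficient, and allowing at most an overall sign, shows that $[\tilde{\mathbf j}]=\sigma$ projectively. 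This sign and index bookkeeping is the only real obstacle. If a discrepancy appears, it can only be an overall scalar, which is irrelevant projectively; any other mismatch would signal an error in computing $\tilde{\mathbf j}$ on the basis.

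Third, commutation follows by functoriality. From $T\mathbf j=\mathbf jT$ we get
\[
\textstyle\bigwedge^2T\circ\tilde{\mathbf j}\,(v_1\wedge v_2)=T\mathbf jv_1\wedge T\mathbf jv_2=\mathbf jTv_1\wedge\mathbf jTv_2=\tilde{\mathbf j}\circ\bigwedge^2T\,(v_1\wedge v_2).
\]
Both sides are antilinear, so they agree on all of $\bigwedge^2\C^4$. Passing to projective classes gives $[T]\circ\sigma=\sigma\circ[T]$ on $\mb P(\bigwedge^2\C^4)$. In particular this holds on $\Gr_2(\C^4)$, where it reads $T\cdot\mathbf j(L)=\mathbf j(T\cdot L)$.

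Finally, the result descends to $\PGL(2,\mb H)$. Real scalars act trivially and commute with $\sigma$.
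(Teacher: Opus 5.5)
Your argument is correct and is essentially the paper's: the decomposable locus is invariant under $\GL(4,\C)$, and then $\sigma([u\wedge v])=[\mathbf j u\wedge \mathbf j v]$ is combined with $T\mathbf j=\mathbf j T$. The only addition is your coordinate check that \eqref{eq:sigma} is induced by $\mathbf j$ (the paper takes this as the definition of $\sigma$); there you have a sign slip, since $\tilde{\mathbf j}E_3=\mathbf j e_2\wedge\mathbf j e_3=-e_1\wedge e_4=-E_4$, and together with $\tilde{\mathbf j}E_4=-E_3$ and $\tilde{\mathbf j}E_5=E_2$ this reproduces \eqref{eq:sigma} exactly, with no overall sign needed.
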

\begin{proof}
The subset $\Gr_{2}(\C^{4})$   of $\mb P(\bigwedge^{2}\C^{4})$ is $\GL(4,\mathbb{C})$--invariant (being made of projective classes of decomposable 2-vectors) and then, in particular, it is preserved by $T$.\par

By the definition of $\sigma$ we have 
\[
\sigma([u\wedge v])=[\mathbf{j}u\wedge \mathbf{j}v]
\]
for any $[u\wedge v]\in \mb P(\bigwedge^{2}\C^{4})$: from the fact that $T$ commutes with $\mathbf{j}$, i.e.,   $T\mathbf{j}=\mathbf{j}T$, it follows  that 
\[
T\cdot \sigma([u\wedge v])
=[T\mathbf{j}u\wedge T\mathbf{j}v]
=[\mathbf{j}Tu\wedge \mathbf{j}Tv]
=\sigma(T\cdot [u\wedge v])\, ,
\]
i.e., $T$ commutes with $\sigma$ on $\mb P(\bigwedge^{2}\C^{4})$ as well.
\end{proof}

Thanks to Theorem \ref{thm:GSS-correspondence},  if a holomorphic curve \(G:V\to \Gr_2(\C^4)\) is the twistor transform of a regular slice function, then its image must be contained in $\mathcal{U}_6$, cf.   \eqref{eq:U_6}. 
\begin{defn}
\label{def:admissible}
Let $f\in\RS$, and let $G_{f}:V\to \Gr_{2}(\C^{4})$ be its twistor
transform. A transformation  $T\in \GL(2,\mb H)$ is called $f$--\emph{admissible} if
\[
T\cdot G_{f}(V)\subset \mathcal{U}_{6}\, .
\]
\end{defn}
\begin{rem}\label{rem:admissible for f graph-chart}
    If $T\in \GL(2,\mb H)$ is given in the form  \eqref{eq:block-T}, then $T$ is $f$--admissible  if
and only if
\begin{equation}
\label{eq:admissible-condition}
\det(A+B\Phi_{f}(v))\neq 0\qquad \forall v\in V.
\end{equation}
\end{rem}

\begin{thm}
\label{thm:conservative-action}
Let  
$G_{f}:V\to \Gr_{2}(\C^{4})$ be the twistor transform of $f\in \RS$, and let $T\in \GL(2,\mb H)$ be an $f$--admissible transformation: then there exists a unique
slice-regular function
\[
T^{*}f\in \RS
\]
such that
\begin{equation}
\label{eq:transform-equality}
G_{T^{*}f}=T\cdot G_{f}.
\end{equation}
Moreover, 
\begin{equation}
\label{eq:Phi-transform}
\Phi_{T^{*}f}(v)=(C+D\Phi_{f}(v))(A+B\Phi_{f}(v))^{-1},\qquad v\in V\, ,
\end{equation}
where $\Phi_{T^{*}f}$ is  the graph matrix of $T^*f$ (Definition~\ref{defGraphMatrix}), and $A$, $B$, $C$ and $D$ are given by the block form~\eqref{eq:block-T} of $T$.
\end{thm}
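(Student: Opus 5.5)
The plan is to apply Theorem~\ref{thm:GSS-correspondence} to the curve $G:=T\cdot G_f:V\to\Gr_2(\C^4)$, which is holomorphic because $T$ acts by a biholomorphism of $\Gr_2(\C^4)$. To use direction (ii)$\Rightarrow$(i), I need a holomorphic extension of $G$ to $\Omega_i$ with image in $\mathcal U_6$ that satisfies the reality condition. The obvious candidate is $\widetilde G:=T\cdot(L\circ\Phi_f)$ on $\Omega_i$. It is holomorphic and agrees with $G$ on $V$.

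\textbf{Reality condition.} By Theorem~\ref{thm:GSS-correspondence} applied to $f$, we have $L(\Phi_f(\bar z))=\sigma(L(\Phi_f(z)))$. By Lemma~\ref{lem:commutes-sigma}, the action of $T$ commutes with $\sigma$. Together these give
\[
\widetilde G(\bar z)=T\cdot\sigma(L(\Phi_f(z)))=\sigma(\widetilde G(z)).
\]

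\textbf{Chart condition.} This is the step I expect to be the main obstacle. Admissibility only gives $\widetilde G(V)\subset\mathcal U_6$, but Theorem~\ref{thm:GSS-correspondence} needs $\widetilde G(\Omega_i)\subset\mathcal U_6$.

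On the lower half $\Omega_i\cap\C^-$, I would use the reality condition. The real structure $\sigma$ preserves $\mathcal U_6$ because it sends $\zeta_6$ to $\bar\zeta_6$. Hence $\widetilde G(\bar v)=\sigma(\widetilde G(v))\in\mathcal U_6$ for $v\in V$.

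On the real points $x\in\Omega_i\cap\R$, the matrix $\Phi_f(x)$ lies in $\r(\mb H)$, since $\hat g(x)=\overline{g(x)}$ there. So $A+B\Phi_f(x)\in\r(\mb H)$, and its determinant is a quaternionic norm, hence either nonzero or the matrix vanishes. I would try to rule out vanishing at isolated real points, and if necessary interpret Definition~\ref{def:admissible} as requiring $\det(A+B\Phi_f)\neq0$ on the whole symmetric set $\Omega_i$. If this step fails, the fallback is as follows. The function $\det(A+B\Phi_f)$ is holomorphic and nonvanishing on $V\cup\overline V$, so the formula \eqref{eq:Phi-transform} defines a matrix function that is meromorphic across the real axis. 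One would then need to argue that the resulting $T^*f$ is slice regular on $\Omega$ minus a discrete real set, and that it extends.

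\textbf{Conclusion.} Granting the chart condition, Theorem~\ref{thm:GSS-correspondence} yields a unique $T^*f\in\RS$ with $G_{T^*f}=T\cdot G_f$, and moreover $L\circ\Phi_{T^*f}=\widetilde G$. Lemma~\ref{lem:fractional-linear-graphs} gives $T\cdot L(\Phi_f(v))=L\bigl((C+D\Phi_f(v))(A+B\Phi_f(v))^{-1}\bigr)$. Lemma~\ref{lem:graph-chart} says that $L$ is injective on $\Mat_{2\times2}(\C)$. Combining these gives \eqref{eq:Phi-transform}. Uniqueness of $T^*f$ also follows directly from the uniqueness clause of Theorem~\ref{thm:GSS-correspondence}.
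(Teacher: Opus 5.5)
Your strategy is the paper's: extend $T\cdot G_f$ by $\widetilde G=T\cdot(L\circ\Phi_f)$, get the reality condition from Lemma~\ref{lem:commutes-sigma}, apply Theorem~\ref{thm:GSS-correspondence}, and read off \eqref{eq:Phi-transform} from Lemma~\ref{lem:fractional-linear-graphs} and the injectivity of $L$. The chart condition you flag is a genuine gap, and it cannot be closed under Definition~\ref{def:admissible} as written. On $\overline V$ your reflection argument is correct: $\sigma$ replaces $\zeta_6$ by $\overline{\zeta}_6$, so it preserves $\mathcal U_6$. But $V=\Omega\cap\mathbb C^+$ contains no real points, and admissibility says nothing at $x\in\Omega\cap\mathbb R$. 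For a counterexample, take $\Omega=\mathbb H$, $f(q)=q$ and $T=\begin{pmatrix}0&I_2\\ I_2&0\end{pmatrix}$, which commutes with $\mathbf j$. Then $\Phi_f(v)=vI_2$, so $\det(A+B\Phi_f(v))=v^2\neq 0$ on $V=\mathbb C^+$ and $T$ is $f$--admissible. However, $T\cdot G_f(v)=L(v^{-1}I_2)$. Any $F\in\mathrm{SR}(\mathbb H,\mathbb H)$ with $G_F=T\cdot G_f$ would have an entire splitting component $g_F$ equal to $1/v$ on $\mathbb C^+$, which is impossible. So the statement is false without an extra hypothesis, and your first option, ruling out real zeros, cannot succeed.

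Your fallback fails for the same reason. Suppose $\det(A+B\Phi_f(x_0))=0$ at a real $x_0$. Since $A+B\Phi_f(x_0)$ lies in $\rho(\mathbb H)$, it is the zero matrix. Hence $T\cdot L(\Phi_f(x_0))=\{0\}\oplus\mathbb C^2$ and $C+D\Phi_f(x_0)$ is invertible. By the identity principle, $\Phi_{T^*f}$ would have to coincide with $(C+D\Phi_f)(A+B\Phi_f)^{-1}$ off the real zeros, and the latter is unbounded near $x_0$. So no $T^*f\in\mathrm{SR}(\Omega,\mathbb H)$ exists precisely when $\det(A+B\Phi_f)$ vanishes somewhere on $\Omega\cap\mathbb R$. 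The paper's proof has the same omission: after checking reality, it asserts that Theorem~\ref{thm:GSS-correspondence} applies without verifying $\widetilde G(\Omega_i)\subset\mathcal U_6$ off $V$. The right repair is your second option: require $\det(A+B\Phi_f(z))\neq 0$ on all of $\Omega_i$. By your reflection argument, this amounts to adding $\det(A+B\Phi_f(x))\neq 0$ for $x\in\Omega\cap\mathbb R$, where $\Phi_f(x)=\rho(f(x))$. Alternatively, keep the hypothesis and obtain $T^*f$ only on $\Omega\setminus Z$, where $Z$ is the discrete set of real zeros of $\det(A+B\Phi_f)$; this is again a symmetric slice domain with the same $V$. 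With either amendment your argument is complete.
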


\begin{proof}
Set
\[
\Psi(v):=(C+D\Phi_{f}(v))(A+B\Phi_{f}(v))^{-1},\qquad v\in V.
\]
Since $\Phi_{f}$ is holomorphic on $V$ and $\det(A+B\Phi_{f}(v))\neq 0$ for all $v\in V$,
the map $\Psi:V\to \Mat_{2\times 2}(\C)$ is holomorphic.\par 

Moreover,
\[
T\cdot G_{f}(v)=T\cdot L(\Phi_{f}(v))=L(\Psi(v))
\]
for each $v\in V$ thanks to Lemma~\ref{lem:fractional-linear-graphs}, so that  $T\cdot G_{f}$ is a holomorphic curve with values in $\mathcal{U}_{6}$.

Next, $T$ commutes
with the real structure $\sigma$ (Lemma~\ref{lem:commutes-sigma}) and $\widetilde G_{f}$ satisfies the reality condition of
Theorem~\ref{thm:GSS-correspondence}: it follows that
\[
(T\cdot\widetilde G_{f})(\overline v)
=T\cdot\widetilde G_{f}(\overline v)
=T\cdot \sigma(\widetilde G_{f}(v))
=\sigma(T\cdot \widetilde G_{f}(v)).
\]
Therefore Theorem~\ref{thm:GSS-correspondence} applies and yields a unique $T^{*}f\in \RS$ whose twistor transform is $T\cdot G_{f}$, and 
\eqref{eq:transform-equality} is fulfilled.

Finally, both $G_{T^{*}f}(v)$ and $T\cdot G_{f}(v)$ lie in the affine chart $\mathcal{U}_{6}$ and
coincide there: by the uniqueness statement in Lemma~\ref{lem:graph-chart}, their graph
matrices must coincide, i.e.,  
\[
\Phi_{T^{*}f}(v)=\Psi(v)=(C+D\Phi_{f}(v))(A+B\Phi_{f}(v))^{-1}\, ,
\]
which is exactly \eqref{eq:Phi-transform}.
\end{proof}

\begin{cor}
\label{cor:partial-action}
The correspondence $(T,f)\mapsto T^{*}f$ defines a partial left action of $\GL(2,\mb H)$ on the
class of slice-regular functions on symmetric slice domains: whenever both sides are
well-defined, one has
\[
(ST)^{*}f=S^{*}(T^{*}f),\qquad I^{*}f=f.
\]
\end{cor}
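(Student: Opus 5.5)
The plan is to derive both identities from the defining property of Theorem~\ref{thm:conservative-action}: $T^{*}f$ is the \emph{unique} slice-regular function on $\Omega$ whose twistor transform is $T\cdot G_f$. This uniqueness is part of Theorem~\ref{thm:GSS-correspondence}. To show that two slice-regular functions agree, it therefore suffices to show that their twistor transforms coincide as curves $V\to\Gr_2(\C^4)$.

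The identity $I^{*}f=f$ is handled first. The identity $I\in\GL(2,\mb H)$ is trivially $f$--admissible, since $I\cdot G_f(V)=G_f(V)\subset\mathcal U_6$. Then $G_{I^{*}f}=I\cdot G_f=G_f$, and uniqueness gives $I^{*}f=f$. Alternatively, one can read this off formula~\eqref{eq:Phi-transform} with $A=D=I_2$ and $B=C=0$.

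Next comes the composition law. We need to pin down what ``both sides are well-defined'' means. We assume that $T$ is $f$--admissible and that $S$ is $T^{*}f$--admissible, so the right-hand side $S^{*}(T^{*}f)$ exists. Then $ST\in\GL(2,\mb H)$, because $\GL(2,\mb H)$ is a subgroup of $\GL(4,\C)$, or equivalently because it commutes with $\mathbf{j}$ by~\eqref{eq:GL(2,H)}. By functoriality of the induced action of $\GL(4,\C)$ on $\Gr_2(\C^4)$,
\[
(ST)\cdot G_f(V)=S\cdot\bigl(T\cdot G_f(V)\bigr)=S\cdot G_{T^{*}f}(V)\subset\mathcal U_6 .
\]
Hence $ST$ is automatically $f$--admissible, and $(ST)^{*}f$ is defined. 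Its twistor transform is
\[
G_{(ST)^{*}f}=(ST)\cdot G_f=S\cdot(T\cdot G_f)=S\cdot G_{T^{*}f}=G_{S^{*}(T^{*}f)} .
\]
Uniqueness in Theorem~\ref{thm:GSS-correspondence} then yields $(ST)^{*}f=S^{*}(T^{*}f)$.

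As a sanity check, the block formula~\eqref{eq:Phi-transform} should compose correctly: composing the fractional-linear maps of Lemma~\ref{lem:fractional-linear-graphs} gives the fractional-linear map of the product block matrix. This confirms the identity at the level of graph matrices, but I will not rely on it.

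The only delicate point is domain bookkeeping. One must observe that $T^{*}f$ lives on the same symmetric slice domain $\Omega$, so that $S$--admissibility for $T^{*}f$ refers to the same $V$. One must also note that the ``partial'' nature of the action is exactly the admissibility requirement. If instead one assumes only that $(ST)^{*}f$ and $S^{*}(T^{*}f)$ exist, the argument above already uses the needed admissibility of $T$ for $f$ and of $S$ for $T^{*}f$. Once the domains and admissibility hypotheses are settled, the rest is formal.
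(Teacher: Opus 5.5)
Your proposal is correct and matches the paper's proof: both use the chain $G_{(ST)^{*}f}=(ST)\cdot G_f=S\cdot(T\cdot G_f)=S\cdot G_{T^{*}f}=G_{S^{*}(T^{*}f)}$ together with uniqueness of the slice-regular function with a given twistor transform, and treat the identity case as immediate. Your extra observation that $ST$ is automatically $f$--admissible once $T$ is $f$--admissible and $S$ is $T^{*}f$--admissible is correct, and it makes explicit the ``whenever both sides are well-defined'' condition that the paper leaves implicit.
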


\begin{proof}
By Theorem~\ref{thm:conservative-action},
\[
G_{(ST)^{*}f}=ST\cdot G_{f}=S\cdot (T\cdot G_{f})=S\cdot G_{T^{*}f}=G_{S^{*}(T^{*}f)}.
\]
The uniqueness statement in Theorem~\ref{thm:conservative-action} implies that the two
slice-regular functions coincide. The identity statement is immediate.
\end{proof}
We now provide a simple example showing that $T^*$ maps constants to constants. \par 

\begin{exam}\label{ex:constant-mobius}
Let $T\in \GL(2,\mb H)$ be given by 
$T=\begin{pmatrix}\rho(\a)&\rho(\beta)\\ \rho(\gamma)&\rho(\delta)\end{pmatrix} $
and let $f_a\equiv a$ be the constant function.
If $T$ is $f_a$-admissible, then
\[
T^*f_a \equiv (\gamma+\delta a)(\alpha+\beta a)^{-1}.
\]
\end{exam}

\begin{proof} 
Since $\Phi_{f_a}\equiv \rho(a)$, formula~\eqref{eq:Phi-transform} becomes
\[
\Phi_{T^*f_a}
=
\bigl(\rho(\gamma)+\rho(\delta)\rho(a)\bigr)
\bigl(\rho(\alpha)+\rho(\beta)\rho(a)\bigr)^{-1}.
\]
Since $\rho$ is a $\R$-algebra monomorphism, it follows that
\[
\rho(\gamma+\delta a)\,\rho(\alpha+\beta a)^{-1}
=
\rho\!\left((\gamma+\delta a)(\alpha+\beta a)^{-1}\right).
\]
Hence $T^*f_a$ is the   function taking the constant  value
\[
(\gamma+\delta a)(\alpha+\beta a)^{-1}.
\qedhere
\]
\end{proof}
Passing to polynomials of degree $n>0$, we show below that such polynomials need to to be preserved by the partial action of $\GL(2,\mathbb{H})$. To this end, let
\[
f(q)=\sum_{m=0}^n q^m a_m,
\qquad a_m\in \mb H,
\]
be a slice-regular polynomial, where
\[
a_m=b_m+c_mj,
\qquad b_m,c_m\in \C\, ,
\]
and let
\[T=\begin{pmatrix}\rho(\a)&\rho(\beta)\\ \rho(\gamma)&\rho(\delta)\end{pmatrix} \in \GL(2,\mb H)\]
be an $f$--admissible transformation.
\begin{prop}\label{prop:polynomial-phi-formula}
The graph matrix of $f$ is given by
\begin{equation}\label{eq:phi-polynomial-expansion}
\Phi_f(v)=\sum_{m=0}^n v^m\,\r(a_m),
\qquad v\in V\, ,
\end{equation}
 and 
\begin{equation}\label{eq:phi-polynomial-transform}
\Phi_{T^*f}(v)
=
\Bigl(\rho(\g)+\rho(\d)\sum_{m=0}^n v^m\r(a_m)\Bigr)
\Bigl(\rho(\a)+\rho(\b)\sum_{m=0}^n v^m\r(a_m)\Bigr)^{-1}.
\end{equation}
In particular, for a general admissible $T$, the function $T^*f$ need not be a polynomial.
\end{prop}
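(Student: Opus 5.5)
The plan has three parts: the identity \eqref{eq:phi-polynomial-expansion} comes from the splitting data, \eqref{eq:phi-polynomial-transform} follows by substitution into Theorem~\ref{thm:conservative-action}, and the last claim needs one explicit example.

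\emph{Step 1: the expansion \eqref{eq:phi-polynomial-expansion}.} On the slice $\C=\C_i$ the variable $v$ commutes with $b_m,c_m\in\C$. Hence $f(v)=\sum_m v^m(b_m+c_mj)=g(v)+h(v)j$, where $g=\sum_m b_mv^m$ and $h=\sum_m c_mv^m$; these are exactly the splitting data, as already used in \eqref{eq:gv-hv}. The reflections are $\hat g(v)=\sum_m\overline{b_m}v^m$ and $\hat h(v)=\sum_m\overline{c_m}v^m$. Substituting into \eqref{eq:Phi-f} and collecting powers of $v$ entrywise gives
\[
\Phi_f(v)=\sum_m v^m\begin{pmatrix} b_m & -\overline{c_m}\\ c_m & \overline{b_m}\end{pmatrix},
\]
and by \eqref{eq:rho-quaternion} the matrix multiplying $v^m$ is $\rho(a_m)$.

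\emph{Step 2: the transform \eqref{eq:phi-polynomial-transform}.} Since $T$ is $f$-admissible, Theorem~\ref{thm:conservative-action} applies with blocks $A=\rho(\alpha)$, $B=\rho(\beta)$, $C=\rho(\gamma)$, $D=\rho(\delta)$. Inserting the expansion from Step~1 into \eqref{eq:Phi-transform} gives \eqref{eq:phi-polynomial-transform} directly.

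\emph{Step 3: $T^*f$ need not be a polynomial.} I will exhibit one admissible pair. Take $T=\begin{pmatrix}0&1\\1&0\end{pmatrix}$, so that $\Phi_{T^*f}=\Phi_f^{-1}$, and take $f(q)=1+q$ on a symmetric slice domain $\Omega$ avoiding $q=-1$. Then $\Phi_f(v)=(1+v)I_2$, whose determinant $(1+v)^2$ is nonzero on $V$, so $T$ is admissible by \eqref{eq:admissible-condition}. The transformed graph matrix is $(1+v)^{-1}I_2$, which is the graph matrix of $(1+q)^{-1}$. Any polynomial $p$ with $T^*f=p$ would have $\Phi_p(v)=\sum v^m\rho(p_m)$ polynomial in $v$, agreeing with $(1+v)^{-1}I_2$ on the open set $V$. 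By the identity principle the two would agree on the connected set $\C\setminus\{-1\}$, so $(1+v)^{-1}$ would be a polynomial, which is false (a polynomial has no pole at $-1$).

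The main point requiring care is Step~3: one must choose $\Omega$ so that admissibility genuinely holds, and justify via uniqueness in Theorem~\ref{thm:GSS-correspondence} that the graph matrix determines the function. Everything else is routine substitution.
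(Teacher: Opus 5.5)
Your Steps 1 and 2 are exactly the paper's proof: expand the splitting data $g,h,\hat g,\hat h$ of the polynomial, read off $\rho(a_m)$ as the coefficient of $v^m$, and substitute into \eqref{eq:Phi-transform}. For the last assertion the paper only remarks that the right-hand side is in general rational in $v$, whereas your explicit witness (the swap $T$, $f=1+q$, $\Phi_{T^*f}=(1+v)^{-1}I_2$, identity principle on $\C\smallsetminus\{-1\}$) actually proves it. Your restriction of $\Omega$ away from $-1$ is the right precaution, though not for admissibility, which already holds because $V\subset\C^+$. It is needed so that $T\cdot\widetilde G_f$ stays in $\mathcal U_6$ on all of $\Omega_i$, including the real point $-1$, and hence $T^*f=(1+q)^{-1}$ genuinely exists in $\RS$.
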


\begin{proof}
Since
\[
f(v)=g(v)+h(v)j,
\qquad
 g(v)=\sum_{m=0}^n b_m v^m,
\qquad
 h(v)=\sum_{m=0}^n c_m v^m,
\]
we have
\[
\widehat g(v)=\overline{g(\overline v)}=\sum_{m=0}^n \overline{b_m}\,v^m,
\qquad
\widehat h(v)=\overline{h(\overline v)}=\sum_{m=0}^n \overline{c_m}\,v^m.
\]
Therefore
\[
\Phi_f(v)
=
\begin{pmatrix}
\sum_{m=0}^n b_m v^m & -\sum_{m=0}^n \overline{c_m} v^m\\
\sum_{m=0}^n c_m v^m & \sum_{m=0}^n \overline{b_m} v^m
\end{pmatrix}
=
\sum_{m=0}^n v^m
\begin{pmatrix}
 b_m & -\overline{c_m}\\
 c_m & \overline{b_m}
\end{pmatrix}.
\]
By the definition of $\r$, the last matrix is exactly $\r(a_m)$, hence
\eqref{eq:phi-polynomial-expansion} follows.
Substituting \eqref{eq:phi-polynomial-expansion} into
\eqref{eq:Phi-transform} gives \eqref{eq:phi-polynomial-transform}.
The final assertion is immediate from the fact that the right-hand side of
\eqref{eq:phi-polynomial-transform} is, in general, a rational function in $v$.
\end{proof}

\begin{prop}\label{prop:hyperplane-invariant}
Let \(A\subset(\CP^5)^\vee\) be a \(\PGL(2,\mathbb H)\)-orbit of
hyperplanes. Let \(f\in\RS\), and let \(T\in\GL(2,\mathbb H)\) be
\(f\)-admissible. If \(G_f\) is planar of type \(A\), then
\(G_{T^*f}\) is also planar of type \(A\). In particular, planarity is
preserved by every admissible transformation.
\end{prop}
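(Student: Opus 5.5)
The plan is to transport the containing hyperplane by $T$ and check that its orbit type does not change. Suppose $G_f(V)\subset X_z$ for some $[z]\in A$, i.e. $G_f(V)\subset \Gr_2(\C^4)\cap\Pi_z$. By Theorem~\ref{thm:conservative-action} we have $G_{T^*f}=T\cdot G_f$, where $T$ acts on $\mb P(\bigwedge^2\C^4)$ through $\bigwedge^2 T$. Hence
\[
G_{T^*f}(V)=T\cdot G_f(V)\subset T\cdot\bigl(\Gr_2(\C^4)\cap \Pi_z\bigr)=\Gr_2(\C^4)\cap \bigl(T\cdot \Pi_z\bigr),
\]
where the last equality uses Lemma~\ref{lem:commutes-sigma}: the projective action of $T$ preserves $\Gr_2(\C^4)$ and is a bijection of $\mb P(\bigwedge^2\C^4)$. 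Since $T$ acts by a projective linear automorphism, $T\cdot\Pi_z$ is again a hyperplane, namely $\Pi_{z'}$ with $z'=z\circ(\bigwedge^2T)^{-1}$, which is the contragredient (dual) action of $T$ on $(\CP^5)^\vee$. Thus $G_{T^*f}\subset X_{z'}$, so $G_{T^*f}$ is planar.

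It remains to show that $[z']\in A$. The orbit $A$ is, by Definition~\ref{def:pole-types}, the image under $\kappa$ of a $\PGL(2,\mb H)$-orbit of poles. I will show that the action on hyperplanes induced by $T$ corresponds, under $\kappa$, to the action of $T$ on poles. Concretely, for a pole $p$, Proposition~\ref{prop:kappa-unified} gives $\Pi_{\kappa([p])}=H_{[p]}=\{[\eta]:B(p,\eta)=0\}$. Then
\[
T\cdot H_{[p]}=\{[T\eta]:B(p,\eta)=0\}=\{[\eta']:B(p,T^{-1}\eta')=0\}.
\]
Since $\bigwedge^2T$ preserves $B$ up to the scalar $\det T$ (because $B$ is the wedge pairing $\bigwedge^2\times\bigwedge^2\to\bigwedge^4\C^4$), we get $B(p,T^{-1}\eta')=(\det T)^{-1}B(Tp,\eta')$. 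Therefore $T\cdot H_{[p]}=H_{[Tp]}$, which says $\kappa$ is equivariant; this is exactly the equivariance already recorded in Proposition~\ref{prop:kappa-unified}. Writing $[z]=\kappa([p])$ with $[p]$ in the pole orbit $\kappa^{-1}(A)$, we obtain $[z']=\kappa(T\cdot[p])$. Since $T\cdot[p]$ lies in the same $\PGL(2,\mb H)$-orbit (the central $\R^\times$ acts trivially projectively), $[z']\in A$.

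The step I expect to need the most care is making precise which action on $(\CP^5)^\vee$ is meant, so that ``orbit'' in Definition~\ref{def:A-planar-unified} matches the transported hyperplane. I would settle this by fixing the dual action $T\cdot[z]:=[z\circ(\bigwedge^2T)^{-1}]$ and invoking the equivariance of $\kappa$ from Proposition~\ref{prop:kappa-unified}, with the short computation above as backup. The final claim, that planarity itself is preserved, is immediate from the first paragraph, since $z'\neq0$ whenever $z\neq0$.
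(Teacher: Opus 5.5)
Your proposal is correct and follows essentially the same route as the paper. Both arguments transport the containing hyperplane by the contragredient action $z\mapsto z\circ(\bigwedge^2T)^{-1}$ and use that $T$ preserves $\Gr_2(\C^4)$, so that $G_{T^*f}(V)=T\cdot G_f(V)\subset X_{z'}$, and then conclude that $[z']$ stays in the orbit $A$.

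The only difference is in that last step. The paper treats $A$ directly as an orbit of the dual action, so membership of $[z']$ is immediate. You instead verify, through the identity $B(\bigwedge^2T\,\omega,\bigwedge^2T\,\eta)=\det T\,B(\omega,\eta)$, that this dual action corresponds under $\kappa$ to the action on poles. That check is correct. It is a harmless extra step which confirms that the two readings of ``orbit in $(\CP^5)^\vee$'' coincide.
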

\begin{proof}
Let
$
\mathcal T:=\bigwedge^2\rho(T)
$
be the induced linear action on \(\bigwedge^2\C^4\). Its projectivization acts
on
\[
\mathbb P(\bigwedge^2\C^4)\cong\CP^5
\]
and preserves the Grassmannian.

We denote by
$
T^\vee
$
the induced dual action on hyperplanes. Thus, for a covector
\(z\in(\C^6)^{\times }\), \(T^\vee z\) is defined by
\[
T(\Pi_z)=\Pi_{T^\vee z}.
\]
Equivalently, at the linear level,
\[
T^\vee z=(\mc T^{-1})^*(z).
\]

Assume that \(G_f\) is planar of type \(A\), that is, there exists
$
[z]\in A
$
such that
\[
G_f(V)\subset X_z=\Gr_2(\C^4)\cap\Pi_z.
\]
Since \(T\) preserves \(\Gr_2(\C^4)\), we have
\[
T(X_z)
=
T\bigl(\Gr_2(\C^4)\cap\Pi_z\bigr)
=
\Gr_2(\C^4)\cap T(\Pi_z)
=
\Gr_2(\C^4)\cap \Pi_{T^\vee z}
=
X_{T^\vee z}.
\]
By Theorem~\ref{thm:conservative-action},
\[
G_{T^*f}(V)
=
T\cdot G_f(V).
\]
Therefore
\[
G_{T^*f}(V)
\subset
T(X_z)
=
X_{T^\vee z}.
\]
Since \(A\) is a \(G\)-orbit in the dual projective space and \([z]\in A\),
we also have
$
[T^\vee z]\in A.
$
Hence \(G_{T^*f}\) is planar of type \(A\).
\end{proof}

\begin{prop}\label{prop:real-scalars-trivial}
Let $T\in \GL(2,\mb H)$, let $r\in \R^{\times}$, and let $f\in \RS$.
Then $T$ is $f$--admissible  if and only if $rT$ is $f$--admissible, in which case  
\[
(rT)^{*}f=T^{*}f.
\]
\end{prop}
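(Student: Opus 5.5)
The plan is to reduce both claims to elementary facts about the complex realization of $rT$. Since $r\in\R^\times$ and $\rho$ is $\R$-linear, $rT$ has block form
\[
rT=\begin{pmatrix} rA & rB\\ rC & rD\end{pmatrix},
\]
where $A,B,C,D$ are the blocks of $T$ as in \eqref{eq:block-T}. Moreover $rT$ still commutes with $\mathbf j$, because $\mathbf j$ is antilinear but $r$ is real. So $rT\in\GL(2,\mb H)$.

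For admissibility I will use Remark~\ref{rem:admissible for f graph-chart}. For every $v\in V$,
\[
\det\bigl(rA+rB\Phi_f(v)\bigr)=r^2\det\bigl(A+B\Phi_f(v)\bigr),
\]
since these are $2\times 2$ complex matrices. As $r^2\neq0$, one side vanishes exactly when the other does. Hence $T$ is $f$-admissible if and only if $rT$ is.

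For the equality of the transformed functions, assume admissibility and apply formula \eqref{eq:Phi-transform} of Theorem~\ref{thm:conservative-action} to $rT$:
\[
\Phi_{(rT)^*f}(v)=r\bigl(C+D\Phi_f(v)\bigr)\,r^{-1}\bigl(A+B\Phi_f(v)\bigr)^{-1}=\Phi_{T^*f}(v).
\]
The scalars cancel because $r$ is central in $\Mat_{2\times2}(\C)$.

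It remains to pass from equal graph matrices to equal functions. Equal graph matrices give $G_{(rT)^*f}=G_{T^*f}$. The uniqueness part of Theorem~\ref{thm:GSS-correspondence} (equivalently, that of Theorem~\ref{thm:conservative-action}) then yields $(rT)^*f=T^*f$. Alternatively, one can argue projectively: $rT$ and $T$ induce the same map on $\Gr_2(\C^4)$, since scalar matrices act trivially on subspaces, so $rT\cdot G_f=T\cdot G_f$ and uniqueness applies.

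No step is a real obstacle. The only point needing care is to invoke uniqueness, rather than just the formula, when concluding that the functions coincide.
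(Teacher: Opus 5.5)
Your proposal is correct and follows the paper's proof essentially step for step. Both arguments use the factor $r^2$ in the determinant condition of Remark~\ref{rem:admissible for f graph-chart}, the cancellation of $r$ in formula \eqref{eq:Phi-transform}, and the uniqueness of the twistor correspondence to conclude $(rT)^*f=T^*f$; your explicit check that $rT\in\GL(2,\mb H)$ and the projective alternative are harmless additions the paper leaves implicit.
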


\begin{proof}
If
\[
T=\begin{pmatrix}A&B\\ C&D\end{pmatrix}\, ,
\]
then
\[
rT=\begin{pmatrix}rA&rB\\ rC&rD\end{pmatrix}\, ,
\]
and condition \eqref{eq:admissible-condition} for $rT$ reads 
\[
\det(rA+rB\Phi_{f}(v))\neq 0
\qquad \text{for all }v\in V
\]
which is equivalent to \eqref{eq:admissible-condition} for $T$ because $r\neq 0$.
If the $f$--admissibility condition holds, then
\[
\Phi_{(rT)^{*}f}
=(rC+rD\Phi_{f})(rA+rB\Phi_{f})^{-1}
=(C+D\Phi_{f})(A+B\Phi_{f})^{-1}
=\Phi_{T^{*}f}.
\]
By the uniqueness of the twistor correspondence in the affine chart, $(rT)^{*}f=T^{*}f$.
\end{proof}\begin{cor}
The partial action of $\GL(2,\mb H)$ on slice-regular functions descends to a partial action of $\mr{PGL}(2,\mb H)$.
\end{cor}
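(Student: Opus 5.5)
The plan is to define the descended action directly on cosets and then check that it is well defined, using Proposition~\ref{prop:real-scalars-trivial} together with Corollary~\ref{cor:partial-action}. Recall from \eqref{eq:PGL2H-def} that $\PGL(2,\mb H)=\GL(2,\mb H)/\R^{\times}$, where $\R^\times$ is identified with the central subgroup $\{rI_4\}$. For a class $[T]\in\PGL(2,\mb H)$ and $f\in\RS$, we will say that $[T]$ is $f$--admissible if some representative $T$ is $f$--admissible, and in that case we set $[T]^*f:=T^*f$.

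The first step is to show that this definition does not depend on the representative. Every other representative has the form $rT$ with $r\in\R^\times$. By Proposition~\ref{prop:real-scalars-trivial}, $T$ is $f$--admissible exactly when $rT$ is, and in that case $(rT)^*f=T^*f$. Hence both the admissibility condition and the resulting function depend only on $[T]$.

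The second step is to check the partial action axioms. The identity class is $[I]$, and $[I]^*f=I^*f=f$ by Corollary~\ref{cor:partial-action}. For composition, suppose $[S]^*([T]^*f)$ and $[ST]^*f$ are both defined. Since admissibility is independent of the representative, we may use the representatives $S$, $T$ and $ST$. Corollary~\ref{cor:partial-action} then gives $(ST)^*f=S^*(T^*f)$, and passing to classes yields $[S][T]^*f=[S]^*([T]^*f)$.

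The argument involves no real obstacle; the only point needing care is that admissibility must be defined independently of the representative, and Proposition~\ref{prop:real-scalars-trivial} supplies exactly this. The domains of definition also match. By Theorem~\ref{thm:conservative-action}, whenever $T^*f$ is defined we have $G_{S^*(T^*f)}=S\cdot G_{T^*f}=ST\cdot G_f$. So if $S$ is $T^*f$--admissible, then $ST$ is $f$--admissible. Conversely, if $ST$ is $f$--admissible and $T^*f$ is defined, the same identity shows that $S$ is $T^*f$--admissible.
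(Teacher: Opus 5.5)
Your proposal is correct and follows the paper's route: there the corollary is stated as an immediate consequence of Proposition~\ref{prop:real-scalars-trivial}, which gives independence of the representative, combined with Corollary~\ref{cor:partial-action}, the same pattern used in the proof of Theorem~\ref{thm:genuine-action-gamma}. One small slip: in the converse of your last paragraph, argue via $S\cdot G_{T^*f}=S\cdot(T\cdot G_f)=ST\cdot G_f\subset\mathcal U_6$, which needs only $G_{T^*f}=T\cdot G_f$, rather than via $G_{S^*(T^*f)}$, whose existence is what is being established.
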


\subsection{The globally admissible subgroup}

To obtain a genuine group action on all twistor transforms, we introduce the notion of ``global admissibility''.
\begin{defn}\label{def:global add}
    We say that $T\in \GL(2,\mb H)$ is \emph{globally admissible} if it is $f$--admissible for every $f\in \RS$.
\end{defn}

We first provide some characterizations of global admissibility.
\begin{prop}\label{prop:global-admissible-H}
Let
\[
T=\begin{pmatrix}A&B\\ C&D\end{pmatrix}\in \GL(2,\mb H).
\]
Then the following are equivalent:
\begin{enumerate}[label=\textup{(\roman*)}]
\item $T$ is globally admissible.
\item $T(\mathcal{U}_{6})\subset \mathcal{U}_{6}$.
\item $B=0$.
\end{enumerate}
\end{prop}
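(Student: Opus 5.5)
We prove the cycle (iii)$\Rightarrow$(ii)$\Rightarrow$(i)$\Rightarrow$(iii). The first two implications are short consequences of Lemma~\ref{lem:fractional-linear-graphs} and Remark~\ref{rem:admissible for f graph-chart}. The substance lies in (i)$\Rightarrow$(iii), where we test admissibility against suitably chosen functions.

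For (iii)$\Rightarrow$(ii), assume $B=0$. Since $T$ is invertible and block lower triangular, $\det T=\det A\det D\neq0$, so $A$ is invertible. For every $\Phi\in\Mat_{2\times2}(\C)$ the matrix $A+B\Phi=A$ is then invertible, and Lemma~\ref{lem:fractional-linear-graphs} gives $T\cdot L(\Phi)\in\mathcal U_6$. By Lemma~\ref{lem:graph-chart} every point of $\mathcal U_6$ has the form $L(\Phi)$, hence $T(\mathcal U_6)\subset\mathcal U_6$.

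For (ii)$\Rightarrow$(i), recall that $G_f(V)\subset\mathcal U_6$ for every $f\in\RS$ by \eqref{eq:Plucker-Gf}. Therefore $T\cdot G_f(V)\subset T(\mathcal U_6)\subset\mathcal U_6$, so $T$ is $f$--admissible for every $f$, which is exactly Definition~\ref{def:global add}.

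The main step is (i)$\Rightarrow$(iii). Write $A=\rho(\alpha)$ and $B=\rho(\beta)$, and suppose $\beta\neq0$. We build an $f\in\RS$ violating \eqref{eq:admissible-condition}, i.e.\ with $\det(A+B\Phi_f(v_0))=0$ for some $v_0\in V$.
\begin{itemize}
\item We first try the constant function $f\equiv a$ with $a:=-\beta^{-1}\alpha\in\mb H$. Then $\Phi_f\equiv\rho(a)$ and $A+B\Phi_f=\rho(\alpha+\beta a)=\rho(0)=0$, which is singular at every $v\in V$.
\item The constant function $f\equiv a$ is slice regular on any symmetric slice domain $\Omega$, so this $f$ lies in $\RS$.
\item Hence $T$ is not $f$--admissible, contradicting (i). So $\beta=0$, that is $B=0$.
\end{itemize}
If one prefers to test against a non-constant function, the polynomial $f(q)=a+(q-v_0)\varepsilon$ with small $\varepsilon$ gives the same conclusion at the point $v_0$. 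The constant case suffices, however.

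The only point needing care is the multiplicativity $\rho(\alpha)+\rho(\beta)\rho(a)=\rho(\alpha+\beta a)$. This holds because $\rho$ is an $\R$-algebra monomorphism, the same fact used in Example~\ref{ex:constant-mobius}. I do not expect a genuine obstacle beyond checking that constants belong to $\RS$ for the given $\Omega$.
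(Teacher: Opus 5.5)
Your proof is correct, but it runs the cycle in the opposite direction and puts the substance into a different implication than the paper does.

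The paper's route is as follows:
\begin{itemize}
\item It proves (i)$\Rightarrow$(ii) via Lemma~\ref{lem:realize-any-point-new}. That lemma shows that, at any fixed non-real $v_0$, the twistor lifts of degree-one polynomials reach every point of $\mathcal U_6$.
\item It proves (ii)$\Rightarrow$(iii) by a purely complex linear-algebra argument. Taking $\Phi=0$ gives $A$ invertible. If $Bu\neq0$, set $x=-A^{-1}Bu$ and choose any $\Phi$ with $\Phi x=u$; then $(A+B\Phi)x=0$.
\item It gets (iii)$\Rightarrow$(i) from $A+B\Phi_f=A$.
\end{itemize}

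In your version, (iii)$\Rightarrow$(ii)$\Rightarrow$(i) are immediate, and all the content sits in (i)$\Rightarrow$(iii). There you use the quaternionic structure of the block: $B=\rho(\beta)$ with $\beta\neq0$ invertible in $\mathbb H$. So the single constant function $f\equiv-\beta^{-1}\alpha$ makes $A+B\Phi_f=\rho(\alpha+\beta a)$ vanish identically. This is the observation implicit in Example~\ref{ex:constant-mobius} that $T^*f_a$ is undefined when $\alpha+\beta a=0$.

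What your route buys: it is shorter, it bypasses the realization lemma entirely, and it shows that global admissibility is already detected by constant functions alone.

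What the paper's route buys: its singular matrix $\Phi$ is a general complex matrix, not necessarily in $\rho(\mathbb H)$. That is why it must realize every $\Phi\in\Mat_{2\times2}(\mathbb C)$ as some $\Phi_f(v_0)$. In exchange, its proof of (ii)$\Leftrightarrow$(iii) works verbatim for arbitrary $T\in\GL(4,\mathbb C)$. It also yields the independent fact that degree-one polynomials sweep out the whole chart $\mathcal U_6$.
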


To prove Proposition \ref{prop:global-admissible-H}, we need to show that   every point of $\mathcal{U}_{6}$ lies on the twistor transform of some slice-regular polynomial: this is done in the following lemma.
\begin{lem}\label{lem:realize-any-point-new}
For any $v_{0}\in \C\smallsetminus \R$ and for any
\[
\Phi_{0}=\begin{pmatrix}a&b\\ c&d\end{pmatrix}\in \Mat_{2\times 2}(\C)
\]
there exists a slice-regular polynomial $f$ (of degree one) on $\mb H$ such that
\[
\Phi_{f}(v_{0})=\Phi_{0}.
\]

\end{lem}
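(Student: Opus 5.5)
We look for $f(q)=a_0+qa_1$ with $a_0,a_1\in\mb H$. By Proposition~\ref{prop:polynomial-phi-formula}, the graph matrix of such an $f$ is
\[
\Phi_f(v)=\r(a_0)+v\,\r(a_1),
\]
so the statement reduces to solving the single linear equation
\[
\r(a_0)+v_0\,\r(a_1)=\Phi_0
\]
for the unknown quaternions $a_0,a_1$. This is an $\R$-linear problem: the map $(a_0,a_1)\mapsto \r(a_0)+v_0\r(a_1)$ goes from $\mb H^2\cong\R^8$ to $\Mat_{2\times2}(\C)\cong\R^8$. It therefore suffices to show that the map is injective, or equivalently that the real subspace $\r(\mb H)+v_0\,\r(\mb H)$ is all of $\Mat_{2\times2}(\C)$.

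To this end, we first note that $\r(\mb H)=\Fix(\p)$ by \eqref{eq:H is fixed point}. Since $\p$ is an anti-linear involution of the complex vector space $\Mat_{2\times2}(\C)$, it is a real structure. Hence
\[
\Mat_{2\times2}(\C)=\Fix(\p)\oplus i\,\Fix(\p),
\]
which is the usual decomposition $M=\tfrac12(M+\p M)+i\cdot\tfrac1{2i}(M-\p M)$.

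Next, write $v_0=x+iy$ with $y\neq0$, which is possible because $v_0\notin\R$. Then
\[
\r(a_0)+v_0\r(a_1)=\bigl(\r(a_0)+x\r(a_1)\bigr)+i\,y\,\r(a_1).
\]
Given $\Phi_0$, let $P:=\tfrac12(\Phi_0+\p\Phi_0)$ and $R:=\tfrac1{2i}(\Phi_0-\p\Phi_0)$, both in $\Fix(\p)$, so that $\Phi_0=P+iR$. We solve explicitly by choosing $a_1$ with $\r(a_1)=R/y$ and then $a_0$ with $\r(a_0)=P-x\,\r(a_1)$. This gives existence, and uniqueness follows along the way.

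The only step needing care is to verify that $\p$ is genuinely anti-linear and involutive, so that the real-structure decomposition holds. This follows from $J\overline J=J^2=-I$ and $J^{-1}=-J$, which give $\p^2(M)=J\,\overline{J}\,M\,\overline{J}^{-1}J^{-1}=M$. The hypothesis $v_0\notin\R$ is exactly what makes $y$ invertible. Finally, $f$ is a polynomial, hence slice regular on all of $\mb H$, and it has degree one when $R\neq0$; when $R=0$ the construction gives $a_1=0$ and $f$ is constant, which is harmless.
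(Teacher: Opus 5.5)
Your argument is correct, and it arrives at the same polynomial as the paper through a different decomposition.

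The paper works with the splitting data. It builds affine functions $g,h$ by two-point Lagrange interpolation at $v_0$ and $\overline{v_0}$. The second-column conditions on $\Phi_f(v_0)$ are really the conditions $g(\overline{v_0})=\overline d$ and $h(\overline{v_0})=-\overline b$ at the reflected point, so the only input is $v_0\neq\overline{v_0}$.

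You instead work with the whole matrix. You split $\Phi_0=P+iR$ along the real structure $\p$, and use $\Fix(\p)=\r(\mb H)$ to reduce to the triangular real system $y\,\r(a_1)=R$, $\r(a_0)+x\,\r(a_1)=P$; here $v_0\notin\R$ enters as $y\neq 0$.

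The two solutions coincide. For instance, your $\r(a_1)=(\Phi_0-\p\Phi_0)/(v_0-\overline{v_0})$ has $(1,1)$-entry $(a-\overline d)/(v_0-\overline{v_0})$ and $(2,1)$-entry $(c+\overline b)/(v_0-\overline{v_0})$. These are exactly the linear coefficients of the paper's $g$ and $h$.

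The paper's route gives explicit entrywise formulas and extends directly to interpolation at several points. Yours is coordinate-free and also gives uniqueness among polynomials of degree $\le 1$. That uniqueness shows the linear coefficient vanishes exactly when $\Phi_0\in\r(\mb H)$.

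So your closing remark is more than a technicality. When $\Phi_0\in\r(\mb H)$, no polynomial of exact degree one works, and the paper's formulas also produce a constant in that case. The parenthetical ``(of degree one)'' in the statement should therefore be read as ``of degree at most one''. That weaker reading is all the proof of Proposition~\ref{prop:global-admissible-H} needs.
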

\begin{proof}
Define two affine holomorphic functions on $\C$ by
\[
g(v):=a\frac{v-\overline v_{0}}{v_{0}-\overline v_{0}}+\overline d\frac{v-v_{0}}{\overline v_{0}-v_{0}},
\qquad
h(v):=c\frac{v-\overline v_{0}}{v_{0}-\overline v_{0}}-\overline b\frac{v-v_{0}}{\overline v_{0}-v_{0}}.
\]
Then
\[
g(v_{0})=a,
\qquad
h(v_{0})=c,
\qquad
\overline{g(\overline v_{0})}=d,
\qquad
\overline{h(\overline v_{0})}=-b.
\]
Let $f$ be the slice-regular polynomial with splitting data $g$, $h$: 
then, by construction,
\[
\Phi_{f}(v_{0})=
\begin{pmatrix}
 g(v_{0}) & -\overline{h(\overline v_{0})}\\
 h(v_{0}) & \overline{g(\overline v_{0})}
\end{pmatrix}
=
\begin{pmatrix}a&b\\ c&d\end{pmatrix}
=\Phi_{0}\, ,
\]
and the final assertion follows from \eqref{lem:graph-chart}.
\end{proof}

We now are able to prove Proposition \ref{prop:global-admissible-H}.
\begin{proof}[Proof of Proposition \ref{prop:global-admissible-H}]
    \textbf{(i)$\Rightarrow$(ii).}
Let $p\in \mathcal{U}_{6}$. By Lemma~\ref{lem:realize-any-point-new}, there exist a slice-regular polynomial $f$ and a point $v_{0}\in V\smallsetminus \R$ such that $G_{f}(v_{0})=p$.
Since $T$ is globally admissible, the curve $T\cdot G_{f}$ is contained in $\mathcal{U}_{6}$, hence
\[
T\cdot p=T\cdot G_{f}(v_{0})\in \mathcal{U}_{6}.
\]
Therefore $T(\mathcal{U}_{6})\subset \mathcal{U}_{6}$.

\textbf{(ii)$\Rightarrow$(iii).}
By Lemmas \ref{lem:graph-chart} and \ref{lem:fractional-linear-graphs}, the condition $T(\mathcal{U}_{6})\subset \mathcal{U}_{6}$ is equivalent to the invertibility of $A+B\Phi$ for every $\Phi\in \Mat_{2\times 2}(\C)$.
Taking $\Phi=0$ gives $A\in \GL(2,\C)$.
Suppose for a contradiction, that $B\neq 0$.
Choose $u\in \C^{2}$ such that $Bu\neq 0$, and set $x:=-A^{-1}Bu$.
Let $\Phi\in \Mat_{2\times 2}(\C)$ be any matrix satisfying $\Phi x=u$.
Then
\[
(A+B\Phi)x=Ax+B\Phi x=-Bu+Bu=0,
\]
contradicting the invertibility of $A+B\Phi$.
Hence $B=0$.

\textbf{(iii)$\Rightarrow$(i).}
If $B=0$, then
\[
A+B\Phi_{f}(v)=A
\]
for every $f$ and every $v$.
Because $T\in \GL(2,\mb H)$, the block $A$ belongs to $\r(\mb H)\cap \GL(2,\C)=\r(\mb H^{\times})$ by \eqref{eq:rho-invertibles}, hence $A$ is invertible.
Therefore, by Remark \ref{rem:admissible for f graph-chart}, $T$ is admissible for every $f$.
\end{proof}

Fix the quaternionic line
\[
\ell:=(0,1)\mathbb H\subset \mathbb H^2,
\]
that is, the projectivization of the second summand in
\[
\mathbb H^2=\mathbb H\oplus \mathbb H.
\]
The natural action of \(\GL(2,\mathbb H)\) on \(\mathbb H^2\) induces a
transitive action on \(\HP^1\).  Hence
\[
\HP^1
\simeq
\GL(2,\mathbb H)/\operatorname{Stab}_{\GL(2,\mathbb H)}(\ell),
\]
where $\mathrm{Stab}(\ell)$ is the subgroup of $\GL(2,\mathbb{H})$ stabilizing $\ell$.

Set
\begin{equation}\label{eq:BH-def}
\mathcal B_{\mathbb H}
:=
\left\{
\begin{pmatrix}
A&0\\
C&D
\end{pmatrix}
\in \GL(2,\mathbb H)
\right\}.
\end{equation}

After projectivization, set
\begin{equation}\label{eq:Gamma-univ-def-new}
\G_{\mathrm{univ}}
:=
\mathcal B_{\mathbb H}/\mathbb R^{\times}
\subset
\PGL(2,\mathbb H).
\end{equation}

\begin{prop}\label{prop:Gamma-subgroup-new}
One has
\[
\mathcal B_{\mathbb H}
=
\operatorname{Stab}_{\GL(2,\mathbb H)}(\ell).
\]
Consequently, \(\mathcal B_{\mathbb H}\) is a subgroup of
\(\GL(2,\mathbb H)\), and
\[
\G_{\mathrm{univ}}
=
\operatorname{Stab}_{\PGL(2,\mathbb H)}(\ell)
\]
is a subgroup of \(\PGL(2,\mathbb H)\).
\end{prop}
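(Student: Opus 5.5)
The plan is to compute the stabilizer of the quaternionic line $\ell=(0,1)\mathbb H$ directly from the block form of $T$, and then pass to the quotient by the center.

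First I need to fix how matrices act. An element of $\GL(2,\mathbb H)$, identified with a $4\times4$ complex matrix via $\rho$ as in \eqref{eq:GL2H-image}, acts on $\mathbb H^2\cong\C^4$ by left multiplication on column vectors. Here $\mathbb H^2$ is a right $\mathbb H$-module, and quaternionic lines are of the form $u\mathbb H$. Under the splitting \eqref{eqC4C2C2}, the line $\ell$ corresponds to the complex $2$-plane $0\oplus\C^2=\spa\{e_3,e_4\}$. This plane is stable under $\mathbf j$, because $\mathbf j$ preserves the second summand.

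\textbf{Inclusion $\mathcal B_{\mathbb H}\subset\operatorname{Stab}(\ell)$.} Write $T=\begin{pmatrix}A&B\\C&D\end{pmatrix}$. Then
\[
T\begin{pmatrix}0\\y\end{pmatrix}=\begin{pmatrix}By\\Dy\end{pmatrix}.
\]
If $B=0$, the image lies in $0\oplus\C^2$. Invertibility of $T$ then forces $T(\ell)=\ell$, since both are $2$-dimensional.

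\textbf{Inclusion $\operatorname{Stab}(\ell)\subset\mathcal B_{\mathbb H}$.} If $T(\ell)=\ell$, then $By=0$ for every $y\in\C^2$, so $B=0$ and $T\in\mathcal B_{\mathbb H}$.

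Working quaternionically instead, the same computation reads $T(0,1)^t=(\beta,\delta)^t$. This lies in $(0,1)\mathbb H$ if and only if $\beta=0$.

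\textbf{Consequences.} Since $\mathcal B_{\mathbb H}$ coincides with a stabilizer, it is automatically a subgroup: stabilizers of a point under a group action are subgroups. For the projective statement, the center $\R^\times I_4$ is contained in $\mathcal B_{\mathbb H}$. Real scalars fix every quaternionic line, because $r(0,q)=(0,q)r$ with $r$ real and central. Hence the action of $\GL(2,\mathbb H)$ on $\HP^1$ descends to $\PGL(2,\mathbb H)$. The preimage of $\operatorname{Stab}_{\PGL(2,\mathbb H)}(\ell)$ in $\GL(2,\mathbb H)$ is exactly $\operatorname{Stab}_{\GL(2,\mathbb H)}(\ell)=\mathcal B_{\mathbb H}$. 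Taking the quotient by $\R^\times$ gives $\G_{\mathrm{univ}}=\operatorname{Stab}_{\PGL(2,\mathbb H)}(\ell)$.

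The only delicate point is fixing the correct convention for the action (left matrix multiplication on right $\mathbb H$-lines) and matching it with the complex block decomposition used in Lemma~\ref{lem:fractional-linear-graphs}. Once that is settled, everything is immediate.
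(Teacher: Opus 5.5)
Your proposal is correct and follows the same route as the paper: both compute $T(0,q)^t=(Bq,Dq)^t$, deduce that $T\ell\subseteq\ell$ if and only if $B=0$, and use invertibility of $T$ to upgrade inclusion to equality. Your argument for the $\PGL(2,\mathbb H)$ statement (the center $\mathbb R^\times I_4$ fixes every point of $\HP^1$, so the preimage of the projective stabilizer is exactly $\mathcal B_{\mathbb H}$) spells out a step that the paper's ``Consequently'' leaves implicit.
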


\begin{proof}
Let
\[
T=
\begin{pmatrix}
A&B\\
C&D
\end{pmatrix}
\in \GL(2,\mathbb H).
\]
Using the column-vector convention, for every \(q\in\mathbb H\) one has
\[
T
\begin{pmatrix}
0\\
q
\end{pmatrix}
=
\begin{pmatrix}
Bq\\
Dq
\end{pmatrix}.
\]
Therefore \(T\ell\subseteq \ell\) if and only if
\[
Bq=0
\qquad
\text{for all }q\in\mathbb H,
\]
which is equivalent to
\[
B=0.
\]
Since \(T\) is invertible, the condition \(T\ell\subseteq \ell\) is then
equivalent to
\[
T\ell=\ell.
\]
Thus
\[
\operatorname{Stab}_{\GL(2,\mathbb H)}(\ell)
=
\left\{
\begin{pmatrix}
A&0\\
C&D
\end{pmatrix}
\in \GL(2,\mathbb H)
\right\}
=
\mathcal B_{\mathbb H}.\qedhere
\]
\end{proof}

We have the following characterization of the element in $\Gamma_{\mr{univ}}$.
\begin{thm}\label{thm:universal-characterization-new}
Let $[T]\in \mr{PGL}(2,\mb H)$.
The following are equivalent:
\begin{enumerate}[label=\textup{(\roman*)}]
\item $[T]$ contains a globally admissible representative.
\item $[T]\in \G_{\mr{univ}}$.
\end{enumerate}
\end{thm}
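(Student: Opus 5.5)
The plan is to reduce both conditions to the vanishing of the block $B$, using Proposition~\ref{prop:global-admissible-H} and Proposition~\ref{prop:Gamma-subgroup-new}. We also use the fact that multiplying a representative by a nonzero real scalar changes neither the block pattern nor admissibility (Proposition~\ref{prop:real-scalars-trivial}).

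\textbf{(i)$\Rightarrow$(ii).} Suppose $[T]$ has a globally admissible representative $T'=rT$ with $r\in\R^\times$. By Proposition~\ref{prop:global-admissible-H}, (i)$\Rightarrow$(iii), the upper right block of $T'$ vanishes, so $T'$ has the form
\[
T'=\begin{pmatrix}A&0\\ C&D\end{pmatrix}.
\]
Hence $T'\in\mathcal B_{\mathbb H}$ by \eqref{eq:BH-def}, and therefore $[T]=[T']\in \mathcal B_{\mathbb H}/\R^\times=\G_{\mr{univ}}$ by \eqref{eq:Gamma-univ-def-new}.

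\textbf{(ii)$\Rightarrow$(i).} Suppose $[T]\in\G_{\mr{univ}}$. Then some representative $T'$ of $[T]$ lies in $\mathcal B_{\mathbb H}$, i.e.\ its block $B$ is zero. By Proposition~\ref{prop:global-admissible-H}, (iii)$\Rightarrow$(i), $T'$ is globally admissible. We then note, using Proposition~\ref{prop:real-scalars-trivial}, that every representative $rT'$ is globally admissible as well, since $rB=0$; so the condition is independent of the choice of representative.

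The one point that needs care is that $\G_{\mr{univ}}$ is defined as a quotient. Membership $[T]\in\G_{\mr{univ}}$ therefore means only that \emph{some} representative lies in $\mathcal B_{\mathbb H}$, and we must check this is equivalent to \emph{all} representatives lying there. This holds because the center $\R^\times I_4$ preserves the condition $B=0$. No other obstacle is expected: the substantive work, namely showing that global admissibility forces $B=0$ via Lemma~\ref{lem:realize-any-point-new}, is already contained in Proposition~\ref{prop:global-admissible-H}.
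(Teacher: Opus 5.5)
Your proof is correct and follows the paper's own argument: both use Proposition~\ref{prop:global-admissible-H} to reduce global admissibility of a representative to $B=0$, which is exactly membership in $\mathcal B_{\mathbb H}$. Your extra remark, that the condition $B=0$ is preserved by the real scalars $\R^\times I_4$, makes explicit the independence from the choice of representative that the paper leaves implicit.
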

\begin{proof}
By Proposition~\ref{prop:global-admissible-H}, a representative
\[
\begin{pmatrix}A&B\\ C&D\end{pmatrix}\in \GL(2,\mb H)
\]
is globally admissible if and only if $B=0$.
This is exactly the condition that the representative belong to $\mc B_{\mb H}$.
Hence (i) and (ii) are equivalent.
\end{proof}

\begin{thm}\label{thm:genuine-action-gamma}
The correspondence
\[
([T],f)\mapsto [T]^*f:= T^{*}f,
\qquad [T]\in \G_{\mr{univ}},
\]
in independent of the choice of a representative $T\in \mc B_{\mb H}$ of $[T]$ and it 
defines a faithful  action of $\G_{\mr{univ}}$ on $\RS$.
\end{thm}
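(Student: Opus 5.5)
The plan is to assemble the statement from Theorem~\ref{thm:conservative-action}, Corollary~\ref{cor:partial-action}, Proposition~\ref{prop:real-scalars-trivial} and Proposition~\ref{prop:global-admissible-H}. There are three things to check: well-definedness, the action axioms, and faithfulness.

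\textbf{Well-definedness.} Any two representatives of $[T]\in\G_{\mr{univ}}$ inside $\mc B_{\mb H}$ differ by a real scalar $r\in\R^\times$. By Proposition~\ref{prop:global-admissible-H}, every element of $\mc B_{\mb H}$ (i.e. with $B=0$) is $f$--admissible for every $f\in\RS$. Hence $T^*f$ is defined for all $f$, and by Proposition~\ref{prop:real-scalars-trivial} we get $(rT)^*f=T^*f$. Therefore $[T]^*f$ does not depend on the representative and is defined on all of $\RS$.

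\textbf{Action axioms.} Since $\mc B_{\mb H}$ is a subgroup (Proposition~\ref{prop:Gamma-subgroup-new}), all products $ST$ remain globally admissible. The intermediate function $T^*f$ lies in $\RS$, so $S$ is admissible for it as well. Corollary~\ref{cor:partial-action} then gives $(ST)^*f=S^*(T^*f)$ and $I^*f=f$ with no domain restrictions, so we obtain a genuine left action of $\G_{\mr{univ}}$.

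\textbf{Faithfulness.} This is the substantive step. Suppose $T=\begin{pmatrix}A&0\\ C&D\end{pmatrix}\in\mc B_{\mb H}$ acts trivially, i.e. $T^*f=f$ for all $f\in\RS$. Writing $A=\r(\a)$, $C=\r(\g)$, $D=\r(\d)$ (so $\beta=0$), formula~\eqref{eq:Phi-transform} becomes
\[
\Phi_{T^*f}=(C+D\Phi_f)A^{-1}.
\]
First test constants via Example~\ref{ex:constant-mobius}: the condition reads $(\g+\d a)\a^{-1}=a$ for all $a\in\mb H$.
\begin{itemize}
\item Taking $a=0$ gives $\g=0$.
\item The remaining identity $\d a=a\a$ for all $a$, evaluated at $a=1$, gives $\d=\a$.
\item Then $\a a=a\a$ for all $a$ forces $\a\in\R^\times$ (the center of $\mb H$).
\end{itemize}
Thus $T=\a I_4$ is a real scalar, so $[T]$ is trivial in $\G_{\mr{univ}}$. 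Constants alone already suffice; alternatively, one can use Lemma~\ref{lem:realize-any-point-new} to realize every $\Phi_0$, making the induced map on $\mathcal U_6$ the identity.

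The main point of care is the phrase ``faithful action of $\G_{\mr{univ}}$''. Faithfulness must be read modulo $\R^\times$, which is exactly why we pass to $\G_{\mr{univ}}=\mc B_{\mb H}/\R^\times$. One also has to use that $\rho$ is an injective algebra map to pass from matrix identities to quaternionic ones, just as in Example~\ref{ex:constant-mobius}.
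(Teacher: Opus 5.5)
Your proposal is correct and follows essentially the same route as the paper's proof. Both argue well-definedness via Proposition~\ref{prop:real-scalars-trivial}, global definedness via Proposition~\ref{prop:global-admissible-H}, and the action axioms via Corollary~\ref{cor:partial-action} together with the closure of $\mc B_{\mb H}$; faithfulness is obtained in both by testing on constant functions with $p=0$ and $p=1$ and then using that $\a$ is central in $\mb H$, hence real.
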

\begin{proof}
The definition is independent of the choice of representative by Proposition~\ref{prop:real-scalars-trivial}, since two representatives of the same class differ by a non-zero real scalar.
Because every element of $\mc B_{\mb H}$ is globally admissible by Proposition~\ref{prop:global-admissible-H}, $[T]^{*}f$ is defined for every $[T]\in \G_{\mr{univ}}$ and every $f$.

Let $[S],[T]\in \G_{\mr{univ}}$ and choose representatives $S,T\in \mc B_{\mb H}$.
Then $ST\in \mc B_{\mb H}$ by Proposition~\ref{prop:Gamma-subgroup-new}, hence all three transforms involved below are defined on the whole class.
By Corollary~\ref{cor:partial-action},
\[
([S][T])^{*}f=(ST)^{*}f=S^{*}(T^{*}f)=[S]^{*}([T]^{*}f).
\]
If $I$ denotes the identity matrix, then $[I]\in \G_{\mr{univ}}$ and
\[
[I]^{*}f=I^{*}f=f.
\]
Therefore $\G_{\mr{univ}}$ acts on the class of slice-regular functions.

We now prove the action is faithful.
Suppose that for a contradiction that $[T]\in \G_{\mr{univ}}$ act trivially on $\RS$.
Choose a representative
\[
T=\begin{pmatrix}A&0\\ C&D\end{pmatrix}\in \mc B_{\mb H},
\]
so that $C\in \r(\mb H)$ and $A,D\in \r(\mb H^\times)$.
Let $a,c,d\in \mb H$ be such that
\[
A=\r(a),\qquad C=\r(c),\qquad D=\r(d).
\]
Now fix any quaternion $p\in \mb H$, and consider the constant slice-regular
function
\[
f_p\equiv p.
\]
From Example \ref{ex:constant-mobius}, one has
\[
[T]^*f_p\equiv (c+d p)a^{-1}.
\]
Since $[T]$ acts trivially, we have $[T]^*f_p=f_p$, hence
\[
(c+d p)a^{-1}=p.
\]
Setting $p=0$ gives $c=0$.
Setting $p=1$ then gives $d=a$.
Hence
\[
ap=pa
\qquad \text{ for all } q\in \mb H.
\]
Thus $a$ belongs to the center of $\mb H$, so $a\in \R^\times$.
It follows that
\[
T=\begin{pmatrix}\r(a)&0\\ 0&\r(a)\end{pmatrix}=a\,I_4,
\]
with $a\in \R^\times$.
Therefore $[T]=[I]$ in $\PGL(2,\mb H)$, and hence also in
$\G_{\mr{univ}}$.
\end{proof}

\subsection{Normal forms for polynomial orbits}

We now restrict the \(\Gamma_{\mathrm{univ}}\)-action to slice-regular polynomials. In this setting the action has an explicit coefficient formula, which yields normal forms and a complete orbit criterion.

In view of Theorem~\ref{thm:genuine-action-gamma} the follwing equivalence relation is well defined.

\begin{defn}\label{def:gamma-orbit-equivalence}
For any $f_1,f_2\in \RS$ we set 
\[
f_1\sim_{\G_{\mr{univ}}}f_2
\]
if and only if  there exists $[T]\in \G_{\mr{univ}}$ such that
\[
f_2=[T]^*f_1.
\]
The corresponding equivalence classes are called the \emph{$\G_{\mr{univ}}$-orbits of  slice-regular functions}.
\end{defn}

In the next Corollary we show that  $\Gamma_{\mr{univ}}$ preserves the class of slice-regular polynomials of degree~$n$. Moreover, if the top power coefficient of a slice-regular polynomial is nonzero, so it is after transformation.
\begin{cor}\label{cor:lower-triangular-polynomial}
Let
$
f(q)=\sum_{m=0}^n  q^m a_m\in \RS
$, and let
$
[T]\in \G_{\mr{univ}}
$ with representative
\[
T=\begin{pmatrix}\r(\a)&0\\ \r (\g)&\r(\d)\end{pmatrix}\in \mc B_{\mb H}.
\]
Then\footnote{The coefficients $u_m$ do not depend on the chosen representative of $[T]$.},
\[
[T]^*f(q)=\sum_{m=0}^n q^m u_m,
\]
where
\begin{equation}\label{eq:coefficients-lower-triangular}
u_0=(\g+\d a_0)\a^{-1},
\qquad
u_m=\d a_m\a^{-1}
\quad (m\ge 1).
\end{equation}
\end{cor}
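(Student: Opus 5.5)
The proof reduces to a direct computation with graph matrices. We combine Proposition~\ref{prop:polynomial-phi-formula}, Theorem~\ref{thm:conservative-action}, and the uniqueness in Theorem~\ref{thm:GSS-correspondence}. Since $T\in\mc B_{\mb H}$, Proposition~\ref{prop:global-admissible-H} shows that $T$ is $f$--admissible, so $[T]^*f=T^*f$ is defined. By Theorem~\ref{thm:genuine-action-gamma} it does not depend on the representative, and the same independence follows from the explicit formulas below, since replacing $(\a,\g,\d)$ by $(r\a,r\g,r\d)$ with $r\in\R^\times$ leaves $(\g+\d a_0)\a^{-1}$ and $\d a_m\a^{-1}$ unchanged.

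\textbf{Transforming the graph matrix.} Substituting $B=0$ into \eqref{eq:phi-polynomial-transform}, and using \eqref{eq:phi-polynomial-expansion}, gives
\[
\Phi_{T^*f}(v)=\Bigl(\r(\g)+\r(\d)\sum_{m=0}^n v^m\r(a_m)\Bigr)\r(\a)^{-1}
=\r\bigl((\g+\d a_0)\a^{-1}\bigr)+\sum_{m=1}^n v^m\,\r\bigl(\d a_m\a^{-1}\bigr).
\]
Two facts justify this. First, $v$ is a complex scalar, so it commutes with the complex matrices $\r(\cdot)$. Second, $\r$ is an $\R$-algebra monomorphism, so $\r(\a)^{-1}=\r(\a^{-1})$ and products of $\r$'s are $\r$ of the quaternionic product.

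\textbf{Identifying the function.} Set $h(q):=\sum_{m=0}^n q^m u_m$, with $u_m$ as in \eqref{eq:coefficients-lower-triangular}. This is a slice-regular polynomial on $\mb H$, hence on $\O$. Applying Proposition~\ref{prop:polynomial-phi-formula} to $h$ gives $\Phi_h(v)=\sum_m v^m\r(u_m)$, which is exactly the expression obtained above. Therefore $G_h=L\circ\Phi_h=L\circ\Phi_{T^*f}=G_{T^*f}$ on $V$. By the uniqueness part of Theorem~\ref{thm:GSS-correspondence} (equivalently, that of Theorem~\ref{thm:conservative-action}), $T^*f=h$.

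\textbf{Degree preservation.} Because $\a$ and $\d$ are invertible, $u_n=\d a_n\a^{-1}\neq0$ whenever $a_n\neq0$, so the degree is preserved.

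The only delicate point is bookkeeping: one must check that the order of quaternionic multiplication matches the order of matrix multiplication under $\r$, in particular that the right factor $\r(\a)^{-1}$ becomes right multiplication by $\a^{-1}$. This holds because $\r$ is multiplicative, $\r(xy)=\r(x)\r(y)$, a fact that can be verified directly from \eqref{eq:rho-quaternion}, for instance using $j z=\overline z j$. The rest is routine.
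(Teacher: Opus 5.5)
Your route is the paper's route: set $B=0$ in the fractional-linear formula, expand $\Phi_f(v)=\sum_m v^m\rho(a_m)$, and translate the block products back into quaternions. The problem sits exactly at the step you flag as delicate. With the convention \eqref{eq:rho-quaternion}, $\rho$ does \emph{not} preserve products; it reverses them. Since $k=ij$ has $z=0$, $w=i$, one finds
\[
\rho(i)\rho(j)=\begin{pmatrix} i&0\\ 0&-i\end{pmatrix}\begin{pmatrix}0&-1\\ 1&0\end{pmatrix}=\begin{pmatrix}0&-i\\ -i&0\end{pmatrix}=-\rho(k)=\rho(ji).
\]
In general, for $x=z_1+w_1j$ and $y=z_2+w_2j$, the identity $jz=\overline{z}j$ gives $xy=(z_1z_2-w_1\overline{w_2})+(z_1w_2+w_1\overline{z_2})j$, whence $\rho(xy)=\rho(y)\rho(x)$. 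Conceptually, $\rho(q)$ is the matrix of $x\mapsto xq$ on $\mathbb H=\mathbb C\oplus\mathbb C j$, viewed as a left $\mathbb C$-vector space with basis $(1,j)$. So the direct verification you propose yields the opposite order, and your displayed expression for $\Phi_{T^*f}$ is not what the computation gives. What it actually gives is
\[
\Phi_{T^*f}(v)=\rho\bigl(\alpha^{-1}(\gamma+a_0\delta)\bigr)+\sum_{m=1}^n v^m\,\rho\bigl(\alpha^{-1}a_m\delta\bigr),
\]
i.e.\ $u_0=\alpha^{-1}(\gamma+a_0\delta)$ and $u_m=\alpha^{-1}a_m\delta$. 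For instance, take $f(q)=qj$ and $T=\mathrm{diag}(I_2,\rho(i))$, i.e.\ $\alpha=1$, $\gamma=0$, $\delta=i$. Then $\Phi_{T^*f}(v)=v\,\rho(i)\rho(j)=v\,\rho(-k)$, so $T^*f(q)=-qk$, whereas the formula you set out to prove predicts $qk$.

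The paper's own proof makes the same appeal: it invokes the claim from Section~\ref{secPrel} that $\rho$ is an $\mathbb R$-algebra monomorphism. So this defect is inherited from the stated conventions rather than introduced by you. Still, neither argument establishes the displayed formula as written, and no amount of bookkeeping will. Either the statement must be corrected to the mirrored formula above, or the conventions of Section~\ref{secPrel} (the complex realization $\rho$ together with the graph matrix $\Phi_f$) must be changed.

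Everything else in your proposal is sound and goes through unchanged with the corrected coefficients:
\begin{itemize}
\item admissibility from $B=0$;
\item the identification of $T^*f$ with the polynomial $h$ via Proposition~\ref{prop:polynomial-phi-formula} and the uniqueness in Theorem~\ref{thm:GSS-correspondence}, which you spell out more carefully than the paper does;
\item invariance under $(\alpha,\gamma,\delta)\mapsto(r\alpha,r\gamma,r\delta)$;
\item $u_n\neq0$.
\end{itemize}
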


\begin{proof}
Since $T\in \mc B_{\mb H}$, it has the form
\[
T=\begin{pmatrix}A&0\\ C&D\end{pmatrix},
\qquad
A=\r(\a),\quad C=\r(\g),\quad D=\r(\d).
\]
Hence \eqref{eq:Phi-transform} becomes
\[
\Phi_{[T]^*f}(v)
=
(C+D\Phi_f(v))A^{-1}.
\]
Using Proposition~\ref{prop:polynomial-phi-formula}, we obtain
\[
\Phi_{[T]^*f}(v)
=
\Bigl(C+D\sum_{m=0}^n v^m\r(a_m)\Bigr)A^{-1}
=
(C+D\r(a_0))A^{-1}+\sum_{m=1}^n v^m D\r(a_m)A^{-1}.
\]
Because $\r$ is an $\R$-algebra monomorphism and
\[
A^{-1}=\r(\a^{-1}),
\]
we get
\[
(C+D\r(a_0))A^{-1}
=
\r\bigl((\g+\d a_0)\a^{-1}\bigr),
\]
and, for every $m\ge 1$,
\[
D\r(a_m)A^{-1}
=
\r(\d a_m\a^{-1}).
\]
Hence
\[
\Phi_{[T]^*f}(v)
=
\r(u_0)+\sum_{m=1}^n v^m\r(u_m),
\]
with $u_m$ given by \eqref{eq:coefficients-lower-triangular}. By
\eqref{eq:phi-polynomial-expansion}, this means exactly that
\[
[T]^*f(q)=\sum_{m=0}^n q^m u_m.
\]
Since $\a,\d\in \mb H^\times$ and $a_n\neq 0$, we also have
\[
u_n=\d a_n\a^{-1}\neq 0,
\]
so the degree remains $n$.
\end{proof}

The above result shows that the action is not transitive even on the space of slice-regular polynomials. On the other hand, for each \(n\in\Z_+\), the space of polynomials of degree \(n\) is invariant under this action. This raises a natural question: is the induced action on the space of polynomials of degree \(n\) transitive?

To study this question, we introduce some notation. For $n\ge 1$, let
\[
\mc P_n:=\left\{\sum_{m=0}^n q^m a_m:\ a_m\in \mb H,\ m=0,1,\ldots,n\text{ and }\ a_n\neq 0\right\}
\]
be the set of slice-regular polynomials of degree $n$, and let
\[
\mc N_n:=\left\{q^n+\sum_{m=1}^{n-1} q^m b_m:\ b_m\in \mb H,\ m=1,\ldots,n-1\right\}
\subset \mc P_n
\]
be the set of monic polynomials with vanishing constant term.\par

For \(n\ge1\), define the normalization map
\[
\mathcal N:\mathcal P_n\to \mathcal N_n,\qquad \sum_{m=0}^n q^m a_m\mapsto q^n+\sum_{m=1}^{n-1}q^m a_m a_n^{-1}.
\]
\begin{prop}\label{prop:normalization-Pn}
Every polynomial \(f\in\mathcal P_n\) is \(\Gamma_{\mathrm{univ}}\)-equivalent
to an element of \(\mathcal N_n\).  More precisely, if
\[
f(q)=\sum_{m=0}^n q^m a_m,
\qquad a_n\neq0,
\]
then the element
\[
T_f=
\begin{pmatrix}
a_n&0\\
-a_0&1
\end{pmatrix}
\in\mathcal B_{\mathbb H}
\]
satisfies
\[
[T_f]^*f(q)
=
q^n+\sum_{m=1}^{n-1}q^m a_m a_n^{-1}
=
\mathcal N(f)(q).
\]
\end{prop}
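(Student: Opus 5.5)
The proof is a direct application of Corollary~\ref{cor:lower-triangular-polynomial} with the specific choice $\a=a_n$, $\g=-a_0$, $\d=1$. The matrix $T_f$ is to be read through the identification \eqref{eq:GL2H-image}, that is, as
\[
T_f=\begin{pmatrix}\r(a_n)&0\\ \r(-a_0)&\r(1)\end{pmatrix}.
\]

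First I will check that $T_f\in\mathcal B_{\mathbb H}$. The upper-right block is zero. The diagonal blocks $\r(a_n)$ and $\r(1)=I_2$ lie in $\r(\mb H^\times)$ because $a_n\neq 0$, so they are invertible by \eqref{eq:rho-invertibles}. A block lower-triangular matrix with invertible diagonal blocks is invertible, so $T_f\in\GL(2,\mb H)$. Hence $[T_f]\in\G_{\mr{univ}}$, and by Theorem~\ref{thm:genuine-action-gamma} the function $[T_f]^*f$ is well defined for every $f$ with no admissibility issue.

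Next I will substitute into \eqref{eq:coefficients-lower-triangular}. This gives
\[
u_0=(-a_0+1\cdot a_0)a_n^{-1}=0,\qquad u_m=1\cdot a_m a_n^{-1}=a_ma_n^{-1}\quad(m\ge1).
\]
In particular $u_n=a_na_n^{-1}=1$, so
\[
[T_f]^*f(q)=q^n+\sum_{m=1}^{n-1}q^ma_ma_n^{-1}=\mathcal N(f)(q).
\]
This polynomial is monic with vanishing constant term, so it lies in $\mathcal N_n$. By Definition~\ref{def:gamma-orbit-equivalence}, $f\sim_{\G_{\mr{univ}}}\mathcal N(f)$.

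There is no real obstacle. The only point needing care is the order of quaternionic multiplication: the coefficients transform as $\d a_m\a^{-1}$, with $\a^{-1}$ on the right. This is why $\a$ must be $a_n$ rather than its inverse, and why the normalization is $a_ma_n^{-1}$ rather than $a_n^{-1}a_m$.
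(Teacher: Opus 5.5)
Your argument is the same as the paper's proof: substitute $\alpha=a_n$, $\gamma=-a_0$, $\delta=1$ into Corollary~\ref{cor:lower-triangular-polynomial}. The extra check that $T_f\in\mathcal B_{\mathbb H}$ is fine. The problem is the step you single out as the only delicate one. You and the Corollary both assume $\rho(\delta)\rho(a_m)\rho(\alpha)^{-1}=\rho(\delta a_m\alpha^{-1})$, and this fails for the paper's $\rho(z+wj)=\begin{pmatrix}z&-\overline w\\ w&\overline z\end{pmatrix}$. That matrix represents right multiplication $q\mapsto qp$ in the basis $\{1,j\}$ of $\mathbb H$ as a left $\mathbb C$-space, so $\rho(p)\rho(p')=\rho(p'p)$. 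For example,
\[
\rho(i)\rho(j)=\begin{pmatrix}0&-i\\ -i&0\end{pmatrix}=\rho(-k)=\rho(ji).
\]
If you compute directly with the $4\times 4$ matrix $T_f$, you get $\Phi_{[T_f]^*f}(v)=\sum_{m\ge1}v^m\rho(a_m)\rho(a_n^{-1})=\sum_{m\ge1}v^m\rho(a_n^{-1}a_m)$, that is,
\[
[T_f]^*f(q)=q^n+\sum_{m=1}^{n-1}q^m\,a_n^{-1}a_m .
\]
This is not $\mathcal N(f)$ in general. For $f(q)=qj+q^2i$ one finds $[T_f]^*f=q^2-qk$, whereas $\mathcal N(f)=q^2+qk$. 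So the displayed identity you set out to prove is false under the paper's own conventions, and the cited Corollary is the source of the error.

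The qualitative part survives. $[T_f]^*f$ is still monic with zero constant term, so $f$ is $\Gamma_{\mathrm{univ}}$-equivalent to an element of $\mathcal N_n$. The correct transformation rule is $u_0=\alpha^{-1}(\gamma+a_0\delta)$ and $u_m=\alpha^{-1}a_m\delta$. Under it, $f\sim_{\Gamma_{\mathrm{univ}}}\mathcal N(f)$ also holds, realized for instance by
\[
T=\begin{pmatrix}1&0\\ -a_0a_n^{-1}&a_n^{-1}\end{pmatrix},
\qquad
(C+D\Phi_f(v))A^{-1}=\sum_{m\ge1}v^m\rho(a_ma_n^{-1}).
\]
To make your proof sound, verify the action directly on graph matrices instead of quoting the Corollary's coefficient formula. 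Then either correct the normalized coefficients to $a_n^{-1}a_m$, or keep $\mathcal N(f)$ and replace $T_f$ by the matrix above.
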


\begin{proof}
Apply Corollary~\ref{cor:lower-triangular-polynomial} with
\[
\alpha=a_n,\qquad
\gamma=-a_0,\qquad
\delta=1.
\]
Then
\[
u_0=(-a_0+a_0)a_n^{-1}=0,
\]
and for \(m\ge1\),
\[
u_m=a_m a_n^{-1}.
\]
In particular,
\[
u_n=a_n a_n^{-1}=1.
\]
Thus
\[
[T_f]^*f(q)=q^n+\sum_{m=1}^{n-1}q^m a_m a_n^{-1}.\qedhere
\]
\end{proof}

\begin{defn}\label{def:simultaneous-conjugacy-Nn}
Two elements
$
q^n+\sum_{m=1}^{n-1}q^m b_m,
q^n+\sum_{m=1}^{n-1}q^m c_m \in\mc N_n
$
are called \emph{simultaneously conjugate} if there exists
$
\eta\in\mathbb H^\times
$
such that
\[
c_m=\eta b_m\eta^{-1}
\qquad\text{ for all }
m=1,\dots,n-1.
\]
\end{defn}
We denote the corresponding quotient by
$
\mathcal N_n/\!\sim.
$
\begin{thm}\label{thm:Pn-orbit-classification}
Let
$
f(q)=\sum_{m=0}^n q^m a_m,
h(q)=\sum_{m=0}^n q^m b_m\in\mc P_n$.  Then \(f\) and \(h\) are
\(\Gamma_{\mathrm{univ}}\)-equivalent if and only if there exists
$
\eta\in\mathbb H^\times
$
such that
\[
b_m b_n^{-1}
=
\eta\,(a_m a_n^{-1})\,\eta^{-1}
\qquad
\text{ for all }
m=1,\dots,n-1.
\]
Equivalently, the map
\[
\mathcal P_n/\Gamma_{\mathrm{univ}}
\longrightarrow
\mathcal N_n/\!\sim,
\qquad
[f]\longmapsto [\mathcal N(f)]
\]
is a bijection.
\end{thm}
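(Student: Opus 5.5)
The whole argument rests on the explicit coefficient formula of Corollary~\ref{cor:lower-triangular-polynomial}. For $T$ with blocks $\r(\a),0,\r(\g),\r(\d)$, the transformed polynomial $[T]^*f$ has coefficients
\[
u_0=(\g+\d a_0)\a^{-1},\qquad u_m=\d a_m\a^{-1}\quad (m\ge1).
\]
I will first establish the coefficient criterion and then deduce the bijection statement from it, using the normalization of Proposition~\ref{prop:normalization-Pn}.

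\emph{Necessity.} Suppose $h=[T]^*f$ for some $[T]\in\G_{\mr{univ}}$, and fix a representative $T\in\mc B_{\mb H}$ as above. By uniqueness of the coefficients of a slice-regular polynomial (equivalently, of the graph matrix $\Phi_h=\sum v^m\r(b_m)$, using injectivity of $\r$), we get $b_m=\d a_m\a^{-1}$ for every $m\ge1$. In particular $b_n^{-1}=\a a_n^{-1}\d^{-1}$. Hence
\[
b_mb_n^{-1}=\d a_m\a^{-1}\a a_n^{-1}\d^{-1}=\d\,(a_ma_n^{-1})\,\d^{-1},
\]
so we may take $\eta:=\d\in\mb H^\times$.

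\emph{Sufficiency.} By Proposition~\ref{prop:normalization-Pn}, $f\sim\mathcal N(f)$ and $h\sim\mathcal N(h)$. Since $\sim_{\G_{\mr{univ}}}$ is an equivalence relation (it comes from a group action, Theorem~\ref{thm:genuine-action-gamma}), it suffices to show $\mathcal N(f)\sim\mathcal N(h)$. Take $\a=\d=\eta$ and $\g=0$, i.e. $T=\mathrm{diag}(\r(\eta),\r(\eta))\in\mc B_{\mb H}$. The formula then gives $u_0=0$, $u_n=\eta\cdot1\cdot\eta^{-1}=1$, and $u_m=\eta(a_ma_n^{-1})\eta^{-1}=b_mb_n^{-1}$. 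So $[T]^*\mathcal N(f)=\mathcal N(h)$.

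\emph{Bijection.} The map $[f]\mapsto[\mathcal N(f)]$ is well defined, because necessity shows that equivalent polynomials have simultaneously conjugate normalizations. It is injective, because sufficiency shows that simultaneously conjugate normalizations force $f\sim h$. It is surjective, because every element of $\mc N_n$ is its own normalization: there $a_n=1$, $a_0=0$, and $\mathcal N$ fixes it.

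I expect no real obstacle. The only points needing care are the identification of coefficients, which comes from the uniqueness of the graph matrix in Lemma~\ref{lem:graph-chart}, and the non-commutative bookkeeping of $b_n^{-1}$.
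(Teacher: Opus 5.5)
Your proof is correct and is essentially the paper's: the necessity step is identical (take $\eta=\d$ from Corollary~\ref{cor:lower-triangular-polynomial}). For sufficiency, the paper writes down in one step the element you obtain by composing $f\to\mathcal N(f)\to\mathcal N(h)\to h$, namely $\d=\eta$, $\a=b_n^{-1}\eta a_n$, $\g=b_0\a-\eta a_0$, while you also make explicit the well-definedness, injectivity and surjectivity of $[f]\mapsto[\mathcal N(f)]$ that the paper leaves to the word ``equivalently''.

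One caveat, shared with the paper and not specific to your argument: with $\r$ as defined one has $\r(i)\r(j)=\r(ji)$, so $\r$ is an anti-homomorphism and the coefficient formula actually reads $u_m=\a^{-1}a_m\d$, $u_0=\a^{-1}(\g+a_0\d)$. Your argument survives unchanged if you take $\eta=\a^{-1}$ in the necessity step, use $\mathrm{diag}(\r(\eta^{-1}),\r(\eta^{-1}))$ in the sufficiency step, and realize $f\sim\mathcal N(f)$ via $\a=1$, $\d=a_n^{-1}$, $\g=-a_0a_n^{-1}$.
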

\begin{proof}
$(\Rightarrow)$.
Let
\[
T=
\begin{pmatrix}
\rho(\alpha)&0\\
\rho(\gamma)&\rho(\delta)
\end{pmatrix}
\in\mathcal B_{\mathbb H}.
\]
By Corollary~\ref{cor:lower-triangular-polynomial},
\[
[T]^*f(q)=\sum_{m=0}^n q^m u_m,
\]
where
\[
u_0=(\gamma+\delta a_0)\alpha^{-1},
\qquad
u_m=\delta a_m\alpha^{-1}
\quad(m\ge1).
\]
Therefore
\[
u_m u_n^{-1}
=
\delta a_m\alpha^{-1}\alpha a_n^{-1}\delta^{-1}
=
\delta(a_m a_n^{-1})\delta^{-1}
\qquad
(m=1,\dots,n-1).
\]

$(\Leftarrow)$. Suppose there exists \(\eta\in\mathbb H^\times\) such that
\[
b_m b_n^{-1}
=
\eta(a_m a_n^{-1})\eta^{-1}
\qquad
(m=1,\dots,n-1).
\]
Set
\[
\delta:=\eta,
\qquad
\alpha:=b_n^{-1}\eta a_n,
\qquad
\gamma:=b_0\alpha-\eta a_0.
\]
Then \(\alpha,\delta\in\mathbb H^\times\), and
\[
T=
\begin{pmatrix}
\rho(\alpha)&0\\
\rho(\gamma)&\rho(\delta)
\end{pmatrix}
\in\mathcal B_{\mathbb H}.
\]
For \(m=n\), we have
\[
\delta a_n\alpha^{-1}
=
\eta a_n (b_n^{-1}\eta a_n)^{-1}
=
b_n.
\]
For \(1\le m\le n-1\), the assumed simultaneous conjugacy gives
\[
\eta a_m a_n^{-1}\eta^{-1}
=
b_m b_n^{-1}.
\]
Hence
\[
\delta a_m\alpha^{-1}
=
\eta a_m a_n^{-1}\eta^{-1}b_n
=
b_m.
\]
Finally,
\[
(\gamma+\delta a_0)\alpha^{-1}
=
(b_0\alpha-\eta a_0+\eta a_0)\alpha^{-1}
=
b_0.
\]
Therefore
\[
[T]^*f=h.\qedhere
\]
\end{proof}

The preceding proposition shows that every polynomial in \(\mathcal P_n\) can
be normalized to an element of \(\mathcal N_n\), by eliminating the constant
term and making the leading coefficient equal to \(1\).  However, such a
normalized representative is not unique.  Indeed, there may still be elements
of \(\Gamma_{\mathrm{univ}}\) which preserve the normalized class
\(\mathcal N_n\).  The next corollary describes exactly this remaining
freedom: after normalization, the only residual action is the simultaneous
inner conjugation of all intermediate coefficients.
\begin{cor}\label{cor:residual-action-Nn}
After normalization, two elements of \(\mathcal N_n\) represent the same
\(\Gamma_{\mathrm{univ}}\)-orbit precisely when their intermediate
coefficients differ by the same quaternionic conjugation. That is,
\[
q^n+\sum_{m=1}^{n-1}q^m b_m
\sim
q^n+\sum_{m=1}^{n-1}q^m c_m
\]
if and only if there exists \(\eta\in\mathbb H^\times\) such that
\[
c_m=\eta b_m\eta^{-1}
\qquad
(m=1,\dots,n-1).
\]
Moreover, the acting group is
\[
\mathbb H^\times/\mathbb R^\times\simeq SO(3).
\]
\end{cor}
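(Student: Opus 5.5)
The first claim is Theorem~\ref{thm:Pn-orbit-classification} specialized to $\mathcal N_n$. I would apply it with $f=q^n+\sum_{m=1}^{n-1}q^m b_m$ and $h=q^n+\sum_{m=1}^{n-1}q^m c_m$. In both polynomials the leading coefficient is $1$ and the constant term vanishes, so the normalized coefficients are $a_ma_n^{-1}=b_m$ and $b_mb_n^{-1}=c_m$. The criterion of the theorem then reads $c_m=\eta b_m\eta^{-1}$ for $m=1,\dots,n-1$, which is the stated equivalence.

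For the acting group, I would compute the stabilizer of $\mathcal N_n$ inside $\Gamma_{\mathrm{univ}}$ using Corollary~\ref{cor:lower-triangular-polynomial}. Take $[T]$ with representative having entries $\alpha,\gamma,\delta$, and apply it to an element of $\mathcal N_n$. The image lies in $\mathcal N_n$ iff $u_0=\gamma\alpha^{-1}=0$ and $u_n=\delta\alpha^{-1}=1$, that is, iff $\gamma=0$ and $\delta=\alpha$. These elements act by $b_m\mapsto \alpha b_m\alpha^{-1}$. So the residual group is the image of $\alpha\mapsto[\mathrm{diag}(\rho(\alpha),\rho(\alpha))]$, i.e.\ $\mathbb H^\times$ modulo the real scalars. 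The kernel of this map is exactly $\mathbb R^\times$: it consists of real multiples of the identity, by the center computation already used in Theorem~\ref{thm:genuine-action-gamma}.

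It remains to identify $\mathbb H^\times/\mathbb R^\times$ with $SO(3)$. Writing $\mathbb H^\times=\mathbb R_{>0}\times Sp(1)$, the quotient is $Sp(1)/\{\pm1\}$. The conjugation action $\eta\mapsto(x\mapsto\eta x\eta^{-1})$ on $\mathrm{Im}\,\mathbb H\cong\mathbb R^3$ gives the standard double cover $Sp(1)\to SO(3)$ with kernel $\{\pm1\}$, and this map is surjective. Conjugation fixes the real part of each coefficient, so the action on the $b_m$ is through this rotation action on their imaginary parts.

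The main subtlety is interpretive: one must read ``acting group'' as the effective group of residual transformations preserving $\mathcal N_n$. The kernel of the action on $\mathcal N_n$ as a whole is exactly $\mathbb R^\times$, since for $n\ge2$ one can choose coefficients $b_m$ with trivial centralizer. For individual orbits the stabilizer may be larger, but that does not affect the identification of the group.
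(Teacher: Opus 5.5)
Your argument is correct and is essentially the paper's: the paper reads both the equivalence and the residual group off the single computation $\gamma=0$, $\delta=\alpha$ from Corollary~\ref{cor:lower-triangular-polynomial}, whereas you cite Theorem~\ref{thm:Pn-orbit-classification} for the first part. The paper also only asserts $\mathbb H^\times/\mathbb R^\times\cong SO(3)$, which you justify via $Sp(1)/\{\pm1\}$. One small slip in your last paragraph: for $n=2$ no single element of $\mathcal N_2$ has stabilizer $\mathbb R^\times$, because a non-real $b_1$ is centralized by $(\mathbb R+\mathbb R b_1)^\times$; the kernel of the action on $\mathcal N_n$ is still $\mathbb R^\times$, but only because you intersect these centralizers over all choices of $b_1$.
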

\begin{proof}
Suppose
\[
f(q)=q^n+\sum_{m=1}^{n-1}q^m b_m\in\mathcal N_n
\quad\text{ and }\quad
[T]^*f\in\mathcal N_n,
\qquad
T=
\begin{pmatrix}
\rho(\alpha)&0\\
\rho(\gamma)&\rho(\delta)
\end{pmatrix}.
\]
The condition that the constant term remains zero gives
$
\gamma\alpha^{-1}=0,
$
and hence
$
\gamma=0.
$
The condition that the leading coefficient remains \(1\) gives
$
\delta\alpha^{-1}=1,
$
hence
$
\delta=\alpha.
$
Therefore
\[
c_m=\alpha b_m\alpha^{-1}
\qquad
(m=1,\dots,n-1).
\]

Conversely, every \(\alpha\in\mathbb H^\times\) is realized by
\[
T=
\begin{pmatrix}
\rho(\alpha) &0\\
0&\rho(\alpha)
\end{pmatrix}.
\]

Real non-zero scalars act trivially by conjugation, so the effective residual
group is
\[
\mathbb H^\times/\mathbb R^\times\simeq SO(3).\qedhere
\]
\end{proof}

\begin{rem}
Let \(f(q)=\sum_{m=0}^n q^m a_m\in\mathcal P_n\).  For \(m=1,\dots,n-1\),
set \(c_m:=a_m a_n^{-1}\) and write
\[
c_m=x_m+v_m,\qquad x_m\in\mathbb R,\quad
v_m\in\operatorname{Im}\mathbb H\simeq\mathbb R^3.
\]
Then the \(\Gamma_{\mathrm{univ}}\)-orbit of \(f\) is determined by the real
numbers \(x_1,\dots,x_{n-1}\) and by the diagonal \(SO(3)\)-orbit of
\((v_1,\dots,v_{n-1})\).
\end{rem}

\begin{cor}\label{cor:transitivity-Pn}
The \(\Gamma_{\mathrm{univ}}\)-action on \(\mathcal P_n\) is transitive if
and only if $n=1$.
\end{cor}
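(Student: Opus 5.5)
The plan is to read the statement off Theorem~\ref{thm:Pn-orbit-classification}, which reduces the $\Gamma_{\mathrm{univ}}$-orbits on $\mathcal P_n$ to the quotient $\mathcal N_n/\!\sim$. The action is transitive if and only if this quotient is a single point. For $n\ge 2$ we show that it contains at least two points.

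\emph{Case $n=1$.} Here $\mathcal N_1=\{q\}$, since there are no intermediate coefficients $b_1,\dots,b_{n-1}$. By Proposition~\ref{prop:normalization-Pn}, every $f\in\mathcal P_1$ is therefore $\Gamma_{\mathrm{univ}}$-equivalent to $q$. Explicitly, if $f(q)=a_0+qa_1$ with $a_1\neq0$, then $[T_f]^*f=q$. Because $\Gamma_{\mathrm{univ}}$ acts by a group action (Theorem~\ref{thm:genuine-action-gamma}), any two elements of $\mathcal P_1$ then lie in the same orbit, so the action is transitive.

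\emph{Case $n\ge2$.} We exhibit two inequivalent elements of $\mathcal N_n$, namely $f_1(q)=q^n$ and $f_2(q)=q^n+q\cdot 1$, that is, $b_1=1$ and all other $b_m=0$. Both lie in $\mathcal P_n$ and are already normalized. By Corollary~\ref{cor:residual-action-Nn}, they are equivalent exactly when there is $\eta\in\mathbb H^\times$ with $1=\eta\cdot 0\cdot\eta^{-1}=0$. This is impossible. Equivalently, conjugation preserves whether a coefficient is zero, and so does its real part. The action on $\mathcal P_n$ is therefore not transitive.

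There is no real obstacle. The only point needing care is to make sure that $f_1$ and $f_2$ both genuinely have degree $n$, so that they belong to $\mathcal P_n$, and that the bijection of Theorem~\ref{thm:Pn-orbit-classification} is applied to the normalized representatives. Both hold because the leading coefficients equal $1$ and the constant terms vanish.
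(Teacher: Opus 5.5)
Your proof is correct and follows the paper's argument: $\mathcal N_1=\{q\}$ gives transitivity for $n=1$. For $n\ge 2$, you take two normalized polynomials whose intermediate coefficient tuples, one zero and one non-zero, cannot be simultaneously conjugate; the only difference from the paper is your witness $q^n+q$ in place of $q^n+q^{n-1}$, which is immaterial.
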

\begin{proof}
If \(n=1\), then
$
\mathcal N_1=\{q\}.
$
Hence every degree-one polynomial is \(\Gamma_{\mathrm{univ}}\)-equivalent to
\(q\), so the action is transitive on \(\mathcal P_1\).

If \(n\ge2\), then the two normalized polynomials
\[
q^n
\qquad\text{and}\qquad
q^n+q^{n-1}
\]
are not simultaneously conjugate, since the zero tuple cannot be conjugated
to a tuple with a non-zero component.  Therefore the action on \(\mathcal P_n\)
is not transitive for \(n\ge2\).
\end{proof}

	\bibliographystyle{acm}
	\bibliography{ref}
\end{document}